\newtheorem{thm}{\bf Theorem}[section]
\newtheorem{prop}[thm]{\bf Proposition}
\newtheorem{lemma}[thm]{\bf Lemma}
\newtheorem*{Def}{\bf Definition}
\newtheorem{rmk}[thm]{\bf Remark}
\newtheorem{cor}[thm]{\bf Corollary}
\def\R{\mathbb R}
\def\N{\mathbb N}
\def\Z{\mathbb Z}
\def\d{\partial}
\def\ep{\varepsilon}
\renewcommand\div{{\rm div}\,}
\def\wt{\widetilde}
\def\cC{\mathcal C}
\def\cE{\mathcal E}
\def\sC{\mathscr C}
\def\cD{\mathcal D}
\def\sH{\mathscr H}
\def\sK{\mathscr K}
\def\cM{\mathcal M}
\def\cS{\mathcal S}
\def\cT{\mathcal T}
\def\hB{\dot{B}}
\def\hE{\dot{E}}
\def\hF{\dot{F}}
\def\hR{\dot{R}}
\def\hS{\dot{S}}
\def\hT{\dot{T}}
\def\hcT{\dot{\mathcal T}}
\def\hsC{\dot{\mathscr C}}
\def\hDelta{\dot{\Delta}}
\def\tf{\widetilde{f}}
\def\tX{\widetilde{X}}
\def\thDelta{\widetilde{\dot{\Delta}}}
\numberwithin{equation}{section}
\begin{document}
\title[Density patches in the inhomogeneous Navier-Stokes equations]{On the persistence of H\"older regular 
patches of density for the  inhomogeneous Navier-Stokes equations}
\author{Rapha\"el DANCHIN and Xin ZHANG}
\keywords{Inhomogeneous Navier-Stokes equations; $\mathcal{C}^{1, \varepsilon}$ density patch; Striated regularity}
\email{raphael.danchin@u-pec.fr and xin.zhang@univ-paris-est.fr}
\address{Universit\'{e} Paris-Est,  LAMA (UMR 8050), UPEMLV, UPEC, CNRS,  61 avenue du G\'en\'eral de Gaulle, 94010 Cr\'eteil Cedex 10. }

\begin{abstract}  In  our recent work dedicated to   the Boussinesq equations \cite{DanZhx2016}, 
we established the persistence of solutions with piecewise constant temperature 
along interfaces with H\"older regularity. 
We here address the same problem for   the inhomogeneous Navier-Stokes equations satisfied by a viscous incompressible
and inhomogeneous fluid. We establish that, indeed, in the slightly inhomogeneous case, patches of densities with $\mathcal{C}^{1, \varepsilon}$ regularity 
propagate for all time. 

As in  \cite{DanZhx2016},   our result follows from the conservation of H\"older regularity
along vector fields moving with the flow. The proof of that latter result  is based on commutator estimates involving  para-vector fields, and multiplier spaces. The overall analysis is more complicated than in \cite{DanZhx2016} 
 however, since the coupling between the mass and velocity equations in 
 the inhomogeneous Navier-Stokes equations is \emph{quasilinear} while it is linear  for the Boussinesq equations.
  \end{abstract}
\maketitle
\section*{Introduction}
We are concerned with  the following \emph{inhomogeneous incompressible Navier-Stokes equations} in the whole space $\R^N$ with $N\geq2$:
\begin{equation}\tag{${INS}$}\label{eq:INSmu}
\left\{\begin{array}{l}
\d_t\rho+u\cdot\nabla\rho=0,\\
\rho(\d_tu+u\cdot\nabla u)-\mu\Delta u+\nabla P=0,\\
\div u=0,\\
(\rho,u)|_{t=0} = (\rho_0, u_0).
\end{array}\right.
\end{equation}
Above,  the unknowns 
$(\rho, u, P) \in \R_+ \times \R^{N} \times \R$ stand for the density, velocity vector field and pressure, respectively, and the so-called viscosity coefficient $\mu$ is a positive constant. 
\medbreak
There is an important literature dedicated to  the mathematical analysis of System \eqref{eq:INSmu}. 
The global existence of finite energy weak solutions with no vacuum (i.e. $\rho>0$) 
has been established in the seventies (see the monograph \cite{AKM1990}
and the references therein), then  extended by {\scshape  Simon} in \cite{Sim1990} in the  vacuum case.
Similar results have been obtained shortly after by {\scshape Lions} in the more general case where
the viscosity is density-dependent (see \cite{LionsP1996}). 

 Among the numerous open questions raised by {\scshape Lions} in  \cite{LionsP1996}, 
 the so-called \emph{density  patch problem} is a particularly challenging one. 
 The question is whether,  assuming that $\rho_0 =\mathds{1}_{\mathcal{D}_0}$
 for some domain  $\mathcal{D}_0$ of $\R^2$
 and that $\sqrt{\rho_0}\, u_0$  is in $L^2(\R^2),$ it is true that
 we have
 \begin{equation}\label{eq:rhott}
 \rho(t)=\mathds{1}_{\mathcal{D}_t}\quad\hbox{for all   }\ t\geq0
 \end{equation}
  for some domain $\mathcal{D}_t$
 \emph{with the same regularity as the initial one.}
 Although the renormalized solutions theory of {\scshape Di Perna} and {\scshape Lions} \cite{DPL} for transport equations
 ensures that we do have  \eqref{eq:rhott} with
  $\mathcal{D}_t$  being  the image of  $\cD_0$ by the volume preserving (generalized) flow of $u,$
    the weak solution framework does not give much information on 
    the regularity of the patch $\cD_t$ for positive times. 
 \medbreak
 The present paper aims at making one more step toward solving 
  {\scshape Lions'} question, by considering the case where 
  \begin{equation}\label{eq:rho0}
  \rho_0 = \eta_1 \mathds{1}_{\mathcal{D}_0} + \eta_2 \mathds{1}_{\mathcal{D}_0^c},\end{equation}
  for some simply connected bounded domain $\mathcal{D}_0$ of
  class $\cC^{1,\ep},$ and positive constants $\eta_1$ and $\eta_2$ \emph{close to one another}.   
    \medbreak
    
  That issue has been considered recently  in  \cite{LiaoZh2016a, LiaoZh2016b} by {\scshape Liao and Zhang} in  the 2-D case (see also \cite{LiaoL2016} for the 3-D case), first assuming 
  that $|\eta_1-\eta_2|$ is small then in the more challenging case where 
  $\eta_1$ and $\eta_2$ are \emph{any} positive real numbers.
  Under suitable striated-type regularity assumptions for the initial velocity, 
  the authors proved   the all-time persistence of high Sobolev
  regularity of patches of density.
  \smallbreak
  Before giving more insight into our main results, let  us briefly recall how {\scshape Liao and Zhang's}  proof goes. As in the pioneering work by {\scshape Chemin} \cite{Che1993} dedicated to the vortex patches problem for the 2-D incompressible Euler equations, the regularity of the interfaces is described by means of one (or several) tangent vector fields that evolve according to the flow of the velocity field. More precisely, let us assume that the  boundary  $\d\cD_0$ of  the initial patch 
   $\cD_0$    is  the level set $f_0^{-1}(\{0\})$ of some
  function $f_0:\R^2\to\R$ that does not degenerate in a neighborhood of $\d\cD_0.$ 
  Then the vector field   $X_0:= \nabla^\perp f_0$ is tangent to $\d\cD_0.$
  Now, if we denote by $\psi$ the flow associated to the velocity field $u,$ that 
  is the solution to the (integrated) ordinary differential equation
  \begin{equation}\label{eq:flow}
  \psi(t,x)=x+\int_0^t u\big(\tau,\psi(\tau,x)\big)\,d\tau,
  \end{equation}
  then  the boundary of ${\mathcal{D}_t}:=\psi(t,{\mathcal{D}_0})$ coincides with  $f_t^{-1}(\{0\})$ where
  $f_t:=f_0\circ \psi_t^{-1}$ and $\psi_t:=\psi(t,\cdot),$ and    
   we have
   \begin{equation}\label{eq:rhot}
  \rho(t,\cdot) = \eta_1 \mathds{1}_{\mathcal{D}_t} + \eta_2 \mathds{1}_{\mathcal{D}_t^c}.
  \end{equation}
  Note that  the tangent vector field
  $X_t:=\nabla^\perp f_t$  coincides with the evolution of the 
  initial vector field $X_0$ along the flow of $u,$ that 
  is\footnote{For any vector field  $Y = Y^{k}(x) \d_k$  and  function $f$ in $\cC^1(\R^N;\R),$
we denote by $\d_Y f$  the \emph{directional derivative} of $f$ along  $Y$, that is, 
with the  Einstein summation convention, $\d_Y f :=  Y^k \d_k f=Y\cdot \nabla f.$}~:
\begin{equation}\label{eq:Xt}X(t,\cdot) := (\d_{X_0} \psi)\circ \psi_t^{-1},\end{equation}
and thus satisfies, at least formally, the transport equation 
\begin{equation}\label{eq:X}
\left\{
\begin{array}{l}
\d_t X +u\cdot \nabla X = \d_{X} u, \\
 X|_{t=0}=X_0.
\end{array}
\right.
\end{equation}
\medbreak

Consequently, the problem of persistence of  regularity for the patch reduces to that of 
the vector field $X$ solution to \eqref{eq:X}. 
In their outstanding work, {\scshape Liao and Zhang} justified that  heuristics 
in the case of high Sobolev regularity, first if  $\eta_1$ and $\eta_2$ are close to one another  \cite{LiaoZh2016a},  and next assuming
only that $\eta_1$ and $\eta_2$ are positive  \cite{LiaoZh2016b}.  More precisely, the function $f_0$ is assumed to be   in $W^{k,p}(\R^2)$  for
some integer number  $k\geq3$  and real number $p$ in $]2,4[,$ 
and the initial velocity field $u_0,$ to  satisfy
the following \emph{striated regularity} property along the vector field $X_0:=\nabla^\perp f_0$:
$$
\d_{X_0}^\ell u_0 \in  H^{s-\frac{\ep\ell}k}\ \hbox{ for all }\ \ell\in\{0,\cdots,k\}\
\hbox{ and for some }\  0<\ep<s<1.
$$
Note however that the minimal regularity requirement in \cite{LiaoZh2016a,LiaoZh2016b} is 
that  $f$ is in   $W^{3,p}$ for some $p\in]2,4f[.$
In terms of H\"older inequality, this means (using Sobolev embedding), 
 that  the boundary of the patch  must be at least in $\cC^{2,\ep}$ for some  $\ep>0.$ 
  \smallbreak
In order to propagate lower order H\"older regularity, one may take advantage of the recent results by {\scshape Huang, Paicu and Zhang} in 
\cite{HPZ2013} (see also \cite{DZ}). Indeed, there, for small enough $\delta>0,$ the authors  construct global unique solutions 
with flow in   $\cC^{1,\delta}$ whenever  the initial density is close enough (for the $L^\infty$ norm)
to some positive constant, and  $u_0$  is in the Besov space 
 $\hB^{\frac{N}{p} -1}_{p,1}(\R^N)\cap \hB^{\frac{N}{p} + \delta-1}_{p,1}(\R^N)$ (see the definition below in 
  \eqref{def:besov}).  This clearly allows to propagate  $\cC^{1,\ep}$ interfaces, but only for $\ep\leq\delta,$ because  the maximal value of $\ep$ is limited by the global regularity assumption on $u_0$    
   although  {\scshape Liao and Zhang}'s results  mentioned above (as well as those
  of {\scshape Chemin} \cite{Che1993} in the context of Euler equations) suggest
 that only  tangential   regularity   is needed to  propagate the regularity of the patch.


\section{Results}

Our goal here is to propagate  the  $\cC^{1,\ep}$ H\"older 
 regularity of the patch, within a \emph{critical} regularity framework. 
 By critical, we mean that we strive for a solution space 
 having the same scaling invariance by time and space dilations as
  \eqref{eq:INSmu} itself, namely: 
 \begin{equation}\label{eq:critical}
(\rho, u, P)(t,x) \rightarrow (\rho, \lambda u, \lambda^2 P)(\lambda^2 t , \lambda x)\quad\hbox{and}\quad
 (\rho_0, u_0)(x) \rightarrow (\rho_0, \lambda u_0)(\lambda x).
\end{equation}
Working with critical regularity is by now a classical approach 
for the  homogeneous Navier-Stokes equations (that is  $\rho$ is a positive constant in \eqref{eq:INSmu})
in the whole space $\R^N$  (see e.g. \cite{BCD2011,L-R2016} and the references
therein) and that it  is also  relevant 
in the inhomogeneous  situation  (see in particular the work by the first author in \cite{Dan2003} devoted
to the well-posedness issue in critical homogeneous Besov spaces, and its generalization 
to more general Besov spaces performed  by  {\scshape Abidi} in \cite{Ab2007} and {\scshape Abidi and Paicu} in \cite{AbP2007}). 

In all those works however, the regularity requirements for the density are much too strong to consider piecewise constant functions. 
That difficulty has been by-passed  in a joint work of the first author with {\scshape P.B. Mucha} \cite{DanM2012}, where  well-posedness 
has been established in a critical regularity framework that allows for initial densities
that are  discontinuous along a $\cC^1$ interface (see the comments below Theorem \ref{thm:wellposed_INS}).
\smallbreak

Before writing out  the statement we are referring to and giving 
the main results of the present paper,   we need to introduce some notations. In all the paper, we agree that $A \lesssim B$ means  $A \leq C B$ for some harmless ``constant'' $C,$
the meaning of which may be guessed from the context. 
For $T\in]0,+\infty[,$ $p\in[1,+\infty]$ and $E$ a Banach space, the notation $L^p_T(E)$ 
designates the space of $L^p$ functions on $]0,T[$ with values in $E,$ and $L^p(\R_+; E)$ 
corresponds to the case $T=+\infty.$ For simplicity, we keep  the same notation 
for  vector or matrix-valued functions. 
\medbreak
Next, let us  recall  the definition of Besov spaces (following  e.g. \cite[Section 2.2]{BCD2011}). To this end, consider  two  smooth radial functions $\chi$ and $\varphi$ supported in  $\{ \xi \in \R^N :|\xi| \leq {4}/{3} \}$ and $\{ \xi \in \R^N : {3}/{4} \leq |\xi| \leq {8}/{3} \},$  respectively, and satisfying
\begin{align} \label{eq:L-P_decomp}
 \sum_{j \in \mathbb{Z} } \varphi(2^{-j}\xi) =1,~~~ \forall ~\xi \in \R^N \backslash \{0\},\quad
  \chi(\xi) + \sum_{j \geq 0 } \varphi(2^{-j}\xi) = 1,~~~\forall ~\xi \in \R^N .
\end{align}

Next, let us introduce the following Fourier truncation operators:
\begin{equation*}
\hDelta_j := \varphi(2^{-j}D),\,\, \hS_j := \chi(2^{-j} D),\,\,\forall j\in \Z; \qquad\Delta_j := \varphi(2^{-j}D), \,\,\forall j \geq 0, \quad\Delta_{-1} : = \chi( D).
\end{equation*}
For all triplet  $(s,p,r) \in \R \times [1,\infty]^2,$
the homogeneous Besov space  $\hB^s_{p,r}(\R^N)$ (just denoted by  $\hB^s_{p,r}$ if the
value of the dimension is clear from the context)  is defined~by
\begin{equation}\label{def:besov}
\hB^{s}_{p,r}(\R^N):= \big\{ \,u \in \mathcal{S}'_h(\R^N): \|u\|_{\hB^s_{p,r}} :=\big\|2^{js}\|\hDelta_j u \|_{L^p} \big\|_{\ell^r(\mathbb{Z}) } <\infty \,\big\},
\end{equation}
  where $\mathcal{S}'_h(\R^N)$ is  the  subspace of tempered distributions $\mathcal{S}'(\R^N)$ defined by
\begin{equation*}
\mathcal{S}'_h(\R^N):=\bigl\{ \,u \in \mathcal{S}'(\R^N) :  \lim_{j\rightarrow -\infty} \hS_j  u = 0 \,\bigr\}\cdotp 
\end{equation*}
We shall also use sometimes  the following inhomogeneous Besov spaces:
\begin{equation}\label{def:inhbesov}
B^{s}_{p,r}(\R^N):= \big\{ \,u \in \mathcal{S}'(\R^N): \|u\|_{B^s_{p,r}} :=\big\|2^{js}\|\Delta_j u \|_{L^p} \big\|_{\ell^r(\N \cup \{-1\}) } <\infty \,\big\}.
\end{equation}
Throughout the paper, we agree that the  notation $b^s_{p,r}(\R^N)$ designates
both  $B^{s}_{p,r}(\R^N)$ and $\hB^{s}_{p,r}(\R^N).$  
\smallbreak
It is  well-known  that the family of Besov spaces contains more classical items like the Sobolev or H\"older spaces.
For instance  $\dot B^s_{2,2}(\R^N)$  coincides with the homogeneous  Sobolev space $\dot H^s(\R^N)$ 
and we have \begin{equation}\label{spc:Holder}
B^s_{\infty,\infty}(\R^N) = \cC^{0,s}(\R^N) = L^{\infty}(\R^N) \cap \hB^s_{\infty,\infty}(\R^N)\quad \mbox{if} \quad s\in ]0,1[.
\end{equation}
To emphasize that latter connection between H\"older 
 and Besov spaces, we shall often   use the notation
$\hsC^{s}:= \hB^{s}_{\infty,\infty}$ (or $\sC^{s}:= B^{s}_{\infty,\infty}$) for \emph{any} $s\in \R.$  
\medbreak
When   investigating evolutionary equations in critical Besov spaces,   it is wise  to use
the following \emph{tilde homogeneous Besov spaces} first introduced by {\scshape Chemin}  in \cite{Che1999}:
 for any $t\in ]0,+\infty]$ and $(s,p,r,\gamma) \in \R \times [1,+\infty]^3$, we set
\begin{equation*}
\widetilde{L}^\gamma_t \big(\hB^{s}_{p,r} \big):= \Bigl\{ u \in \mathcal{S}'(]0,t[ \times \R^N):  \lim_{j \rightarrow -\infty} \hS_j u =0  \quad \mbox{in} \quad L^\gamma_t(L^\infty) \quad \mbox{and} \quad\|u\|_{\widetilde{L}^\gamma_t (\hB^{s}_{p,r})} <\infty\Bigr\},
\end{equation*}
where 
\begin{equation*}
\|u\|_{\widetilde{L}^\gamma_t (\hB^{s}_{p,r})} :=\big\|2^{js}\|\hDelta_j u\|_{L^\gamma_t(L^p)} \big\|_{\ell^r(\Z)} <\infty .
\end{equation*}
The index $t$ will be omitted if it is equal to $+\infty,$ and we shall denote
$$
\wt\cC_b(\R_+;\dot B^s_{p,r}):=\wt L^\infty(\R_+;\dot B^s_{p,r})\cap \cC(\R_+;\dot B^s_{p,r}).
$$
Finally, we shall make use of \emph{multiplier spaces} associated to couples  $(E, F)$ of  Banach spaces
 included in the set of  tempered distributions.   The definition goes as follows:
\begin{Def}
Let $E$ and $F$ be two Banach spaces embedded in $\cS'(\R^N).$
The \emph{multiplier space}  $\cM(E \to F)$ (simply denoted by $\cM(E)$ if $E=F$)
is  the set  of those functions $\varphi$ satisfying $\varphi u\in F$ for all $u$ in $E$ and, 
additionally,
\begin{equation}\label{eq:def_norm_multiplier}
\|\varphi\|_{\cM(E \to F)} := \sup_{ \substack{u \in E \\ \|u\|_{_E} \leq 1} } \|\varphi u\|_F<\infty.
\end{equation}
\end{Def}
It goes without saying that $\|\cdot\|_{\cM(E\to F)}$ is a norm on $\cM(E\to F)$
and that one may restrict the supremum in \eqref{eq:def_norm_multiplier}
to any \emph{dense} subset of $E.$
\medbreak

The following result  that has been proved in \cite{DanM2012}
is the starting point of our analysis. 
\begin{thm}\label{thm:wellposed_INS}
Let $p\in [1,2N[$ and $u_0$ be a divergence-free vector field with coefficients in $\hB^{\frac{N}{p}-1}_{p,1}$. Assume that  $\rho_0$ belongs to the multiplier space $\cM \big(\hB^{\frac{N}{p}-1}_{p,1}\big).$ There exist two  constants $c$ and 
$C$ depending only on $p$ and on $N$ such that if 
\begin{equation*}
\|\rho_0 -1 \|_{\cM \big(\hB^{\frac{N}{p}-1}_{p,1}\big)} +  \mu^{-1}\|u_0\|_{\hB^{\frac{N}{p}-1}_{p,1}} \leq c
\end{equation*}
then System \eqref{eq:INSmu} in $\R^N$ with $N\geq2$ has a unique solution $(\rho, u, \nabla P)$ satisfying
 \begin{itemize}
\item  $\rho \in L^{\infty}\Big(\R_+; \cM \big( \hB^{\frac{N}{p}-1}_{p,1} \big)\Big),$
\item $u\in \wt\cC_b \big(\R_+;\dot B^{\frac Np-1}_{p,1}\big),$
\item  $(\d_tu,\nabla^2 u,\nabla P) \in L^1 \big(\R_+;\dot B^{\frac Np-1}_{p,1} \big).$ 
 \end{itemize}
 Furthermore, 
the following inequality is fulfilled:
\begin{equation}\label{est:up}
\|u\|_{\wt L^\infty\big(\R^+;\dot B^{\frac Np-1}_{p,1} \big)}+\|\d_tu,\mu\nabla^2u,\nabla P\|_{L^1 \big(\R^+;\dot B^{\frac Np-1}_{p,1} \big)} \leq C\|u_0\|_{\dot B^{\frac Np-1}_{p,1}}.
\end{equation}
\end{thm}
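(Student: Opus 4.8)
The plan is to follow the Lagrangian strategy of \cite{DanM2012}. Since $\rho_0$ is merely bounded in a multiplier space, one cannot hope to control $\nabla\rho$, so the natural idea is to change variables so that the density becomes \emph{time independent}. Denoting by $X(t,\cdot)$ the flow of $u$ as in \eqref{eq:flow} and setting $v(t,y):=u(t,X(t,y))$, the mass equation reduces to $\rho(t,X(t,y))=\rho_0(y)$, while the momentum equation and the divergence-free constraint turn into a quasilinear parabolic system for $v$,
\begin{equation*}
\rho_0\,\d_tv-\mu\,\div\big(A_XA_X^{T}\nabla v\big)+A_X^{T}\nabla Q=0,\qquad\div(A_Xv)=0,\qquad v|_{t=0}=u_0,
\end{equation*}
where $A_X:=(D_yX)^{-1}$ depends on $v$ through $X(t,y)=y+\int_0^tv(\tau,y)\,d\tau$ and $Q(t,y):=P(t,X(t,y))$. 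The decisive feature is that $A_X$ stays close to the identity as soon as $\int_0^t\|\nabla v\|_{L^\infty}\,d\tau$ is small, and that $\|\rho_0-1\|_{\cM(\hB^{N/p-1}_{p,1})}$ is now a \emph{constant} of the motion.

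Second, I would solve the linear variable-density Stokes system $\rho_0\d_tw-\mu\Delta w+\nabla Q=f$, $\div w=0$, $w|_{t=0}=w_0$, in the space
\begin{equation*}
E_T:=\Big\{w\in\wt\cC_b\big([0,T];\hB^{\frac Np-1}_{p,1}\big)\ :\ \d_tw,\ \mu\nabla^2w,\ \nabla Q\in L^1_T\big(\hB^{\frac Np-1}_{p,1}\big)\Big\}.
\end{equation*}
The analytic heart of the matter is the maximal $L^1_T(\hB^{N/p-1}_{p,1})$ regularity for the \emph{constant}-density Stokes operator, obtained blockwise from the Littlewood--Paley decomposition, the smoothing of the heat semigroup and the boundedness of the Leray projector; one then writes $\rho_0\d_tw=\d_tw+(\rho_0-1)\d_tw$ and absorbs the extra contribution, using $\|(\rho_0-1)\d_tw\|_{L^1_T(\hB^{N/p-1}_{p,1})}\le\|\rho_0-1\|_{\cM(\hB^{N/p-1}_{p,1})}\|\d_tw\|_{L^1_T(\hB^{N/p-1}_{p,1})}$ together with the smallness of $\|\rho_0-1\|_{\cM}$. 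The pressure is recovered from the equation by a div--curl argument, yielding $\|w\|_{E_T}\lesssim\|w_0\|_{\hB^{N/p-1}_{p,1}}+\|f\|_{L^1_T(\hB^{N/p-1}_{p,1})}$ with constant independent of $T$.

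Third, I would run a fixed-point argument on a small ball of $E_T$: to $v$ one associates $\Phi(v)$, the solution of the linear system above in which $f$ and the divergence constraint are built from $v$, i.e. the terms $\mu\,\div((A_XA_X^{T}-\mathrm{Id})\nabla v)$, $(\mathrm{Id}-A_X^{T})\nabla Q$ and the correction of the incompressibility condition are moved to the right-hand side. Using the product law $\hB^{\frac Np}_{p,1}\cdot\hB^{\frac Np-1}_{p,1}\hookrightarrow\hB^{\frac Np-1}_{p,1}$, which holds precisely because $p<2N$, the embedding $\hB^{\frac Np}_{p,1}\hookrightarrow L^\infty$, an interpolation inequality providing $\wt L^2_T(\hB^{\frac Np}_{p,1})$ control of $v$, and the multiplier bound for the $(\rho_0-1)(\cdots)$ contributions, one checks that $v\mapsto A_X$ and the attached maps are Lipschitz with small constant, so that $\Phi$ is a contraction; under the smallness hypotheses on $u_0$ and $\rho_0-1$ one may take $T=+\infty$. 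Since $X_t$ is then a bi-Lipschitz diffeomorphism of $\R^N$ close to the identity, one inverts the change of variables to recover $(\rho,u,\nabla P)$ with the stated regularity and inequality \eqref{est:up}; uniqueness in the Eulerian formulation follows from that of the Lagrangian fixed point, two Eulerian solutions with the same data producing, after the invertible change of variables, two solutions of the same contraction equation.

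The main obstacle I expect is closing the estimates in the \emph{endpoint} space $L^1_T(\hB^{N/p-1}_{p,1})$ with a density that lies only in $\cM(\hB^{N/p-1}_{p,1})$: one must control both the pressure and the quasilinear coefficients $A_XA_X^{T}-\mathrm{Id}$ without any extra smoothness on $\rho_0$, which forces careful product and composition estimates in critical Besov spaces (including the $\ell^1$ summability of the dyadic blocks), and this must be done with all constants independent of $T$ so that global existence follows from the smallness of the data alone.
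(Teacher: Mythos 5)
Your proposal is correct and follows essentially the same route by which this theorem is actually established: the paper itself does not reprove it but quotes it from \cite{DanM2012}, whose argument is precisely the Lagrangian reformulation you describe (time-independent density $\rho_0$, maximal $L^1$-in-time regularity for the constant-coefficient Stokes system in $\hB^{\frac Np-1}_{p,1}$, absorption of $(\rho_0-1)\d_t w$ through the multiplier norm, and a contraction argument exploiting the product law valid for $p<2N$, with constants uniform in $T$ so that smallness of the data yields a global solution). Reverting the change of variables via the bi-Lipschitz flow then gives the Eulerian statement and estimate \eqref{est:up}, exactly as you indicate.
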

A similar result (only local in time) may be proved for large $u_0.$ 
However the smallness condition on $\rho_0-1$ is still needed,
and whether one can extend Theorem \ref{thm:wellposed_INS} to the case of   \emph{large} density variations and \emph{critical} velocity fields is totally open. 
\smallbreak
By classical embedding, having $\nabla^2u$ in $L^1(\R_+;\dot B^{\frac Np-1}_{p,1})$ implies that $\nabla u$ is 
in $L^1(\R_+;\cC_b).$ Therefore the flow $\psi$ of $u$ is in $\cC^1.$
Now, it has been observed in  \cite{DanM2012} that  for any uniformly $\cC^1$ bounded domain $\cD_0,$
 the function $\mathds{1}_{\cD_0}$  belongs to  
$\cM \big(\hB^{s}_{p,1}\big)$  whenever $-1+\frac1p<s<\frac1p\cdotp$
Therefore, one may deduce from Theorem \ref{thm:wellposed_INS} that  if
 $\rho_0$ is   given by \eqref{eq:rho0}, if $u_0$ is in $\dot B^{\frac Np-1}_{p,1}$ for some $N-1<p<2N$
 and if  
$$\|u_0\|_{\dot B^{\frac Np-1}_{p,1}}+|\eta_2-\eta_1|\quad\hbox{is  small enough}$$
then System \eqref{eq:INSmu} admits a unique global solution in the above regularity 
class with    $\rho(t,\cdot)$  given by \eqref{eq:rhot} and   $\cD_t=\psi(t,\cD_0)$
 in  $\cC^1$ for all time $t\geq0.$  
\medbreak
The present paper aims at  propagating $\cC^{1,\ep}$ regularity of density patches
for any $\ep \in ]0,1[$  and  \emph{within  a critical regularity framework.} 
 For simplicity,  we shall focus  on simply connected bounded domains $\cD_0,$ 
 and  $\cC^{1,\ep}$ regularity thus means that    there exists some open neighborhood
$V_0$ of $\cD_0$ and a function $f_0:\R^N\to\R$ of class $\cC^{1,\ep}$ such that 
\begin{equation}\label{eq:f0}
\cD_0=f_0^{-1}(\{0\})\cap V_0\quad\hbox{and }\ 
\nabla f_0\ \hbox{ does not vanish on }\ V_0.
\end{equation}
As the viscosity coefficient $\mu$ will be fixed once and for all, we shall set it to $1$ 
for notational simplicity.  Likewise,  we shall assume the
reference density at infinity to be $1.$ 
\smallbreak
 Our main statement of propagation 
of H\"older regularity of density patches for $(INS)$ in the plane reads as follows.
\begin{thm}\label{thm:N2} Let  $\cD_0$ be  a simply connected bounded domain of $\R^2$ satisfying  \eqref{eq:f0} for some $\ep$ in $]0,1[.$
There exists a constant $\eta_0$ depending only on $\cD_0$ so that for all $\eta\in]-\eta_0,\eta_0[$
if the initial density is given by 
\begin{equation}\label{eq:density_patch}
\rho_0 := (1+\eta) \mathds{1}_{\mathcal{D}_0} + \mathds{1}_{\mathcal{D}^c_0},
\end{equation}  and   the divergence free vector-field $u_0\in L^2$ 
has vorticity  $\omega_0:=\d_1u_0^2-\d_2u_0^1$ with zero average and such that
\begin{equation}\label{eq:vorticity}
\omega_0=\wt\omega_0\, \mathds{1}_{\mathcal{D}_0}
\end{equation}
for some small enough function  $\wt\omega_0$ with H\"older regularity, 
then   System $(INS)$ has  a unique solution $(\rho, u,\nabla P)$  with the properties
listed in  Theorem \ref{thm:wellposed_INS} for some suitable $p\in]1,4[$.

 In addition, if we denote by $\psi$ the flow of $u$ then for all $t\geq0,$ we have
\begin{equation}\label{eq:density_patcht}
\rho(t,\cdot) := (1+\eta) \mathds{1}_{\mathcal{D}_t} + \mathds{1}_{\mathcal{D}^c_t}
\ \hbox{ with  }\ \mathcal{D}_t := \psi (t,\mathcal{D}_0), 
\end{equation} 
and $\mathcal{D}_t$ remains a simply connected bounded domain of class $\cC^{1,\ep}.$ 
\end{thm}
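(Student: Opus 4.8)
The plan is to reduce the persistence of $\cC^{1,\ep}$ regularity of the patch to the propagation of H\"older regularity along the flow-transported vector field $X$ solving \eqref{eq:X}, exactly in the spirit of Chemin's vortex-patch argument. First I would parametrize $\d\cD_0$ by a level set of $f_0\in\cC^{1,\ep}$, set $X_0:=\nabla^\perp f_0$, and observe that since the flow $\psi$ is in $\cC^1$ (by Theorem \ref{thm:wellposed_INS} and the embedding $\dot B^{N/p-1}_{p,1}\hookrightarrow\cC_b$ for $\nabla u$) and preserves volume, $\cD_t=\psi(t,\cD_0)$ is automatically simply connected, bounded, and of class $\cC^1$, with $\rho(t,\cdot)$ given by \eqref{eq:density_patcht}. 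The whole game is then to show that $X_t:=(\d_{X_0}\psi)\circ\psi_t^{-1}$, which parametrizes $\d\cD_t$ through $f_t=f_0\circ\psi_t^{-1}$, stays in a suitable H\"older-type space uniformly in time, and that $\nabla f_t$ does not vanish near $\d\cD_t$ — the latter following from $\inf_{V_0}|\nabla f_0|>0$ together with control of $\|\nabla\psi_t^{\pm1}\|_{L^\infty}$.

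The core estimate is the a priori bound for the coupled system $(INS)$ together with the transport equation \eqref{eq:X} for $X$, in a \emph{striated} critical regularity framework. I would first fix the functional setting: choose $p\in]1,4[$ so that $\rho_0-1\in\cM(\dot B^{N/p-1}_{p,1})$ (which holds for $\mathds{1}_{\cD_0}$ with a $\cC^{1,\ep}$ boundary when $-1+1/p<N/p-1<1/p$), and work with $u\in\wt\cC_b(\R_+;\dot B^{N/p-1}_{p,1})$, $(\d_tu,\nabla^2u,\nabla P)\in L^1(\R_+;\dot B^{N/p-1}_{p,1})$. On top of this, since in 2-D $u_0\in L^2$ with vorticity $\omega_0=\wt\omega_0\mathds{1}_{\cD_0}$ and $\wt\omega_0$ H\"older, Biot–Savart and the multiplier-space structure give the required smallness of $\|u_0\|_{\dot B^{2/p-1}_{p,1}}$ in terms of $\|\wt\omega_0\|$ and the geometry of $\cD_0$; combined with $|\eta|\le\eta_0$ small this opens Theorem \ref{thm:wellposed_INS}. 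Then I would propagate, along the flow, the quantity $\d_X u$ (the ``para-vector-field'' directional derivative of $u$) in an appropriate negative-index space such as $\wt L^1_T(\dot B^{N/p}_{p,1})$-type norms twisted by $X$, controlling $X$ itself in $L^\infty_T(\cC^{0,\ep}\cap\cM(\cdots))$ via its transport equation $\d_tX+u\cdot\nabla X=\d_Xu$, using $\div X=0$ (inherited from $X_0=\nabla^\perp f_0$) so that no loss occurs in the advection. The key commutator estimate bounds $[\d_X,\Delta]u$ and $\d_X(u\cdot\nabla u)-u\cdot\nabla(\d_Xu)$ in terms of $\|\nabla u\|_{\cC_b}$, $\|X\|_{\cC^{0,\ep}}$ and the striated norm of $u$, and must be combined with the maximal-regularity smoothing of the (variable-coefficient, because $\rho\not\equiv1$) Stokes operator.

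The main obstacle, and the place where this paper genuinely differs from the Boussinesq case of \cite{DanZhx2016}, is the \emph{quasilinear} coupling between $\rho$ and $u$: the momentum equation reads $\rho(\d_tu+u\cdot\nabla u)-\Delta u+\nabla P=0$ with $\rho$ merely a multiplier, not a smooth coefficient, so applying $\d_X$ to it produces terms like $\d_X\rho\cdot(\d_tu+u\cdot\nabla u)$ and one must make sense of $\d_X\rho$ for $\rho$ discontinuous. The resolution is that $X$ is tangent to the interface, so $\d_X\rho=\d_X\bigl((\eta)\mathds{1}_{\cD_t}\bigr)$ should vanish in a suitable weak/striated sense — but turning this heuristic into rigorous commutator bounds requires the multiplier-space machinery $\cM(\dot B^s_{p,r}\to\dot B^{s'}_{p,r})$ introduced above, and careful tracking of how the pressure gradient $\nabla P$, defined through an elliptic equation with coefficient $\rho$, inherits striated regularity. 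I would therefore devote the bulk of the argument to: (i) establishing a stable elliptic estimate for $\nabla P$ in striated spaces, exploiting smallness of $\|\rho-1\|_{\cM}$; (ii) a Gronwall-type closure for the combined norm of $(u,\d_Xu,X)$ over $[0,T]$ that is \emph{global} thanks to the smallness assumptions; and (iii) translating the resulting $\cC^{0,\ep}$ bound on $X_t$, via the identification $X_t=\nabla^\perp f_t$ and non-degeneracy of $\nabla f_t$, into the $\cC^{1,\ep}$ regularity of $\d\cD_t$, together with a continuation/uniqueness argument to make the solution and the patch description global in time.
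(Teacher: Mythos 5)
Your overall architecture --- reduce the patch problem to all-time control of the transported tangent field $X$ in $\cC^{0,\ep}$ via para-vector fields, multiplier-space regularity of $\rho$, and a Gronwall closure under smallness --- is exactly the paper's strategy, but in the paper that machinery is packaged once and for all as Theorem \ref{thm:main_dINS} and Proposition \ref{prop:NN}, and the proof of Theorem \ref{thm:N2} itself consists almost entirely of a step your proposal omits: verifying that the specific data of the theorem satisfy the hypotheses of the propagation result. Two things have to be checked. First, that $u_0\in\dot B^{\frac2p-1}_{p,1}(\R^2)$ for a suitable $p\in]1,4[$: this is not automatic and uses the zero-average hypothesis on $\omega_0$ (one splits $u_0=\dot S_0u_0+({\rm Id}-\dot S_0)u_0$; the mean-free property gives the ${\mathcal O}(|x|^{-2})$ decay of the low-frequency part needed to place it in $L^q$ with $q>1$, while the high frequencies are handled by the homogeneity of $({\rm Id}-\dot S_0)\nabla^\perp(-\Delta)^{-1}$). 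Second, and decisively, that $\d_{X_0}u_0\in\dot B^{\frac2p+\ep-2}_{p,1}$: since $u_0$ has only critical regularity, $\d_{X_0}u_0$ is a priori no better than $\dot B^{\frac2p-2}_{p,1}$, and the $\ep$ gain comes solely from the structure $\omega_0=\wt\omega_0\,\mathds{1}_{\cD_0}$ combined with the tangency $\d_{X_0}\mathds{1}_{\cD_0}=0$ and $\div X_0=0$. In the paper this is carried out by replacing $\d_{X_0}$ with $\hcT_{X_0}$ (estimate \eqref{es:dXf_TXf}), commuting $\hcT_{X_0}$ with the Biot--Savart operator (Lemma \ref{lemma:ce1}), reducing to $\div(X_0\omega_0)=\div(X_0\,\wt\omega_0\,\mathds{1}_{\cD_0})$ and invoking the compact-support results (Proposition \ref{prop:besov-holder}, Corollary \ref{cor:dXfg_3D}), the Lebesgue index being fixed by $\alpha=\frac2p-2+\ep$ with $\alpha\in]0,\ep[$, which is what forces $p\in]1,\min(4,\frac1{1-\ep})[$. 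Your proposal asserts smallness of $\|u_0\|_{\dot B^{2/p-1}_{p,1}}$ ``by Biot--Savart and the multiplier-space structure'' but never produces this initial striated regularity of $u_0$; without it the propagation scheme you describe cannot even be initialized, so this is a genuine gap rather than a presentational difference.

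The remaining items you sketch --- commutator bounds for $[\hcT_X,\Delta]$ and for the convection term, the treatment of $\d_X\rho\,D_tu$ through $\cM\big(\hB^{\frac Np-1}_{p,1}\to\hB^{\frac Np+\ep-2}_{p,1}\big)$ (in fact here $\d_{X_0}\rho_0\equiv0$ exactly, by tangency, which is cleaner than a ``weak vanishing'' argument), the perturbative handling of the variable-coefficient Stokes system using the smallness of $\|\rho-1\|$ rather than a genuine variable-coefficient maximal regularity theory, and the global Gronwall closure --- do correspond to how Theorem \ref{thm:main_dINS} is proved in Section \ref{sect:3}, so that portion of your outline is consistent with the paper, though it remains a sketch of results the paper takes as given when proving the statement at hand.
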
 
\begin{rmk} We need  the initial vorticity to be mean free, in order
to guarantee that $u_0$ belongs to some homogeneous Besov space
$\dot B^{\frac2p-1}_{p,1}.$ 
It is no longer needed in dimension $3$ (see Theorem \ref{thm:N3} below).

 Of course, there are many other examples of initial velocities  for which 
propagation  of $\cC^{1,\ep}$ patches holds true~: an obvious one is when 
$u_0$ has critical regularity and vanishes on a neighborhood of~$\cD_0.$
 \end{rmk}
 \begin{rmk}
 Our method would allow us to consider  large initial vorticities as in \eqref{eq:vorticity}. 
 However, we would end up with a local-in-time result only. 
 \end{rmk}
As in \cite{LiaoZh2016a,LiaoZh2016b}, Theorem \ref{thm:N2} will come up as a consequence 
of a much more general result of persistence of geometric structures for $(INS).$
To give the exact statement, we need to introduce 
for $(\sigma, p, T) \in \R \times [1,\infty] \times ]0,\infty] ,$  
  the space
\begin{equation*}
 \hE^\sigma_p (T):=\big\{ (v, \nabla Q) : v\in \wt\cC_b \big([0,T[; \hB^{\frac{N}{p}-1+\sigma}_{p,1}), 
\: (\d_t v, \nabla^2 v , \nabla Q ) \in L^1_T \big( \hB^{\frac{N}{p}-1+\sigma}_{p,1}\big)  \big\} ,
\end{equation*}
endowed with the norm 
\begin{equation*}
\|(v,\nabla Q)\|_{\hE^\sigma_p (T)} := \|v\|_{\wt{L}^\infty_T\big( \hB^{\frac{N}{p}+\sigma-1}_{p,1}\big)} + \|(\d_t v, \nabla^2 v, \nabla Q )\|_{L^1_T \big(\hB^{\frac{N}{p}+\sigma-1}_{p,1}\big)}.
\end{equation*}
For notational simplicity, we shall omit $\sigma$ or  $T$  in  the notation $\hE^\sigma_p(T)$  whenever $\sigma$ is zero or $T=\infty.$ For instance, $\hE_p := \hE^0_p (\infty).$ 
\begin{thm}\label{thm:main_dINS}
Let $\ep$ be in $]0,1[$ and $p$ satisfy
\begin{equation}\label{cdt:p}
\frac N2<p < \min \Big\{ \frac{N}{1-\ep}, 2N\Big\}\cdotp
\end{equation}
Let  $u_0$ be a divergence-free vector field with coefficients in $\hB^{\frac{N}{p}-1}_{p,1}$. Assume that the initial density $\rho_0$ is bounded and belongs to the multiplier space $\cM \big(\hB^{\frac{N}{p}-1}_{p,1}\big) \cap \cM \big(\hB^{\frac{N}{p}+\ep-2}_{p,1}\big).$ There exists a  constant $c$ depending only on $p$ and on $N$ such that if 
\begin{equation}\label{cdt:u0_small}
\|\rho_0 -1 \|_{\cM \big(\hB^{\frac{N}{p}-1}_{p,1}\big) \cap \cM \big(\hB^{\frac{N}{p}+\ep-2}_{p,1}\big)\cap L^\infty} + \|u_0\|_{{\hB}^{\frac{N}{p}-1}_{p,1}} \leq c,
\end{equation}
then System $(INS)$  has a unique solution $(\rho, u, \nabla P)$ with 
 $$\rho\in L^\infty\Big(\R_+ ; L^\infty\cap \cM \big(\hB^{\frac{N}{p}-1}_{p,1}\big) \cap \cM \big(\hB^{\frac{N}{p}+\ep-2}_{p,1}\big) \Big) \quad\hbox{and}\quad  (u,\nabla P)\in \hE_p.$$
 Moreover,  for any vector field $X_0$ with $\cC^{0,\ep}$ regularity 
 (assuming in addition that   $\ep>2-\frac Np$ if $\div X_0\not\equiv0$),  
 if the following striated-type conditions are fulfilled
$$\d_{X_0} \rho_0 \in  \cM \big(\hB^{\frac{N}{p}-1}_{p,1}\rightarrow \hB^{\frac{N}{p}+\ep-2}_{p,1}\big) \quad\hbox{and}\quad
\d_{X_0} u_0\in\hB^{\frac{N}{p}+\ep-2}_{p,1},$$ 
then System \eqref{eq:X} in $\R^N$ has a unique global 
 solution $X\in C_w(\R_+;\cC^{0,\ep}),$ and we have
\begin{equation*}
 \d_X \rho \in L^\infty \Big(\R_+ ; \cM \big(\hB^{\frac{N}{p}-1}_{p,1}\rightarrow \hB^{\frac{N}{p}+\ep-2}_{p,1}\big) \Big)\quad\hbox{and}\quad
 (\d_X u, \d_X\nabla P)\in \hE^{\ep-1}_p.
\end{equation*} 
\end{thm}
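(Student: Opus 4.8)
\textbf{Proof strategy for Theorem \ref{thm:main_dINS}.}

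The plan is to decouple the argument into two layers. First, the existence, uniqueness and basic regularity of $(\rho,u,\nabla P)$ under the smallness condition \eqref{cdt:u0_small} follows directly from Theorem \ref{thm:wellposed_INS} applied with the stated value of $p$; the additional hypothesis that $\rho_0-1\in\cM(\hB^{\frac Np+\ep-2}_{p,1})$ must be propagated in time, which is done by interpolating the transport equation $\d_t\rho+u\cdot\nabla\rho=0$ against the space $\cM(\hB^{\frac Np+\ep-2}_{p,1})$: since $u\in L^1(\R_+;\mathrm{Lip})$ the flow $\psi$ is bi-Lipschitz, and one checks that composition with a bi-Lipschitz map in $\mathrm{Lip}\cap W^{1,1}$ preserves this multiplier norm (this is where the condition $\frac N2<p$, i.e.\ $\frac Np+\ep-2>\frac Np-2\geq -1$ roughly, enters, guaranteeing the target regularity index lies in the window where $\mathds 1_{\cD}$-type multipliers behave well). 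The $L^\infty$ bound on $\rho$ is immediate from the transport structure.

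Second, and this is the heart of the matter, I would study the linear transport equation \eqref{eq:X} for $X$. Formally $X(t,\cdot)=(\d_{X_0}\psi)\circ\psi_t^{-1}$, and one reads off that $X\in\cC^{0,\ep}$ for all time from the fact that $\nabla u\in L^1(\R_+;\cC^{0,\ep})$ (which comes from $\nabla^2u\in L^1(\R_+;\hB^{\frac Np-1}_{p,1})$ together with the embedding $\hB^{\frac Np-1}_{p,1}\hookrightarrow\cC^{0,\ep}$ valid precisely when $p<\frac N{1-\ep}$, i.e.\ the upper constraint in \eqref{cdt:p}) and a Gronwall argument on the para-linearized flow. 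The constraint $\ep>2-\frac Np$ when $\div X_0\neq0$ is forced because $\div X$ solves a transport equation with source $-\,{}^t\nabla X:\nabla u$-type terms that only close in $\hB^{\frac Np+\ep-2}_{p,1}$, which must be a space of genuine functions, i.e.\ $\frac Np+\ep-2>0$. To get the quantitative striated bounds I would differentiate the $u$-equation along $X_0$: setting $v:=\d_Xu$ and $\nabla Q:=\d_X\nabla P$ (interpreted via the commuted system), $v$ solves an inhomogeneous Stokes-type system
\begin{equation*}
\rho(\d_tv+u\cdot\nabla v)-\Delta v+\nabla Q=F,\qquad \div v=(\text{l.o.t.}),
\end{equation*}
where $F$ collects commutators such as $[\d_X,u\cdot\nabla]u$, $[\d_X,\Delta]u$, $(\d_X\rho)(\d_tu+u\cdot\nabla u)$ and $[\d_X,\div]$-corrections restoring the divergence-free condition. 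The task is to bound $F$ in $L^1_T(\hB^{\frac Np+\ep-2}_{p,1})$, use the maximal-regularity estimate for the Stokes system with variable (but $L^\infty$-small) density — which is exactly the mechanism behind \eqref{est:up}, now applied at regularity index $\frac Np+\ep-2$ — and close by Gronwall.

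The main obstacle, as the abstract already flags, is the \emph{quasilinear} coupling: the commutator $(\d_X\rho)(\d_tu+u\cdot\nabla u)$ ties the control of $\d_Xu$ to that of $\d_X\rho$, and $\d_X\rho$ lives only in the multiplier space $\cM(\hB^{\frac Np-1}_{p,1}\to\hB^{\frac Np+\ep-2}_{p,1})$, not in a function space. Hence I would need product/commutator estimates in multiplier spaces — of the type $\|\d_X\rho\cdot g\|_{\hB^{\frac Np+\ep-2}_{p,1}}\lesssim\|\d_X\rho\|_{\cM(\hB^{\frac Np-1}_{p,1}\to\hB^{\frac Np+\ep-2}_{p,1})}\|g\|_{\hB^{\frac Np-1}_{p,1}}$ applied with $g=\d_tu+u\cdot\nabla u\in L^1_T(\hB^{\frac Np-1}_{p,1})$ — together with a transport estimate for $\d_X\rho$ in that multiplier space, which itself requires commuting $\d_X$ through $u\cdot\nabla$ and controlling $[\d_X, u\cdot\nabla]\rho=(\d_X u)\cdot\nabla\rho-(\d_\rho\text{-type})$; here the para-vector-field calculus and the fact that $\nabla\rho$ is only a multiplier (not a function) force careful Bony-decomposition bookkeeping. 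Once the closed loop for $(\d_X\rho,\d_Xu,\d_X\nabla P)$ is set up, a fixed-point or continuation argument on a small time interval, extended globally using the smallness \eqref{cdt:u0_small} and the linearity of \eqref{eq:X} in $X$, yields the stated conclusion; the weak continuity $X\in C_w(\R_+;\cC^{0,\ep})$ rather than strong continuity is the usual price for working at the endpoint H\"older exponent. Finally, Theorem \ref{thm:N2} is recovered by choosing $X_0=\nabla^\perp f_0$, checking that \eqref{eq:vorticity} with $\wt\omega_0$ H\"older small gives $u_0\in\hB^{\frac2p-1}_{p,1}$ small and $\d_{X_0}u_0\in\hB^{\frac2p+\ep-2}_{p,1}$, and translating the conclusion on $X$ back into $\cC^{1,\ep}$ regularity of $\d\cD_t$ via the level-set representation.
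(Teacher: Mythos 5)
The central gap is your claim that $X(t)\in\cC^{0,\ep}$ can be ``read off'' from the flow representation because $\nabla u\in L^1(\R_+;\cC^{0,\ep})$, which you deduce from $\nabla^2u\in L^1(\R_+;\hB^{\frac Np-1}_{p,1})$ and an alleged embedding $\hB^{\frac Np-1}_{p,1}\hookrightarrow\cC^{0,\ep}$. No such embedding exists: one only has $\hB^{\frac Np}_{p,1}\hookrightarrow\cC_b$, so the solution produced by Theorem \ref{thm:wellposed_INS} has $\nabla u\in L^1(\R_+;L^\infty)$ and a flow that is merely $\cC^1$, never $\cC^{1,\ep}$, unless one assumes the subcritical regularity $u_0\in\hB^{\frac Np+\ep-1}_{p,1}$ --- exactly the assumption the theorem is designed to avoid (that is the route of \cite{HPZ2013} discussed in the introduction, which caps $\ep$). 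The H\"older bound on $X$ is therefore not an input but the output: it can only come from proving $\d_Xu\in L^1_{loc}(\R_+;\cC^{0,\ep})$, and this is where all the work lies. Moreover, in this critical framework the quantities $\d_X\Delta u$ and $\d_X\nabla P$ need not even be defined (the regularity sum $\ep+\frac Np-2$ may be negative), so one cannot simply differentiate the momentum equation along $X$ and estimate $[\d_X,\Delta]u$, $[\d_X,\nabla]P$ as you propose. The paper's key device is to replace $\d_X$ by the para-vector field $\hcT_X:=\hT_{X^k}\d_k$, for which \eqref{es:dXf_TXf} shows the difference is lower order, to restore the divergence-free condition through the correction $w=\hT_{\d_kX}u^k-\hT_{\div X}u$, and only then to run maximal regularity for the heat flow on the resulting Stokes-like system, absorbing $(1-\rho)\d_tv$ by the smallness \eqref{cdt:u0_small}; the bound $\hcT_Xu\in L^1_t(\hB^{\frac Np+\ep}_{p,1})\hookrightarrow L^1_t(\hsC^{\ep})$ then yields the H\"older control of $\d_Xu$ and hence of $X$.

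Two further points. Your plan for $\d_X\rho$ --- a transport estimate in the multiplier space requiring control of $[\d_X,u\cdot\nabla]\rho$ with ``Bony bookkeeping'' --- is both unnecessary and, as described, unworkable, since $\nabla\rho$ is not a function. Equation \eqref{eq:X} says precisely that $[D_t,\d_X]=0$, hence $D_t\d_X\rho=0$ and $(\d_X\rho)(t)=(\d_{X_0}\rho_0)\circ\psi_t^{-1}$; the multiplier norm is then conserved simply by composition with the measure-preserving $\cC^1$ flow (Lemma \ref{lemma:multiplier}), with no commutator at all --- this exact transport is what makes the quasilinear coupling tractable. Likewise your explanation of the constraint $\ep>2-\frac Np$ when $\div X_0\not\equiv0$ is incorrect: since $\div u=0$, $\div X$ is purely transported, see \eqref{eq:divX}, with no source term of the type you invoke; the constraint comes from the product/remainder term $\hR(f,\div X)$ with $f\in\hB^{\frac Np-1}_{p,1}$ in $(\hcT_X-\d_X)f$, which requires $\ep+\frac Np-2>0$ to make sense. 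Your overall architecture (Stokes-type system for the differentiated velocity, commutator bounds in $L^1_t(\hB^{\frac Np+\ep-2}_{p,1})$, smallness plus Gronwall, then the application to patches) is the right skeleton, but without the para-vector-field reformulation, the exact transport of $\d_X\rho$, and a correct source of H\"older regularity for $X$, the argument does not close.
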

Some comments are in order:
\begin{itemize}
\item The divergence-free property on $X_0$ is  conserved during the evolution because if one takes  the divergence of \eqref{eq:X},  and remember that $\div u =0,$ then we get
\begin{equation}\label{eq:divX}
\left\{
\begin{array}{l}
 \d_t \div X + u\cdot \nabla \div X = 0,\\[1ex]
 \div X |_{t=0}=\div X_0. 
\end{array}
\right.
\end{equation}
\item In the case $\div X_0 \not= 0,$ the additional constraint on $(\ep,p)$ 
is due to the fact that the product of a general $\cC^{0,\ep}$ function with a $\dot B^{\frac Np-2}_{p,1}$
distribution need not be defined if the sum of regularity coefficients, namely $\ep+\frac Np-2,$ is negative.
\item The vector field $X$ given by \eqref{eq:X} has
the Finite Propagation Speed  Property. 
Indeed, from the definitions of the flow and of the space $\dot E_p,$ and from the embedding of
$\dot B^{\frac Np}_{p,1}(\R^N)$ in $\cC_b(\R^N),$ we readily get for all $t\geq0$ and $x\in\R^N,$
$$
\bigl|\psi(t,x)-x\bigr|\lesssim \sqrt t\,\|u\|_{L^2_t(\dot B^{\frac Np}_{p,1})}\leq C\sqrt t\,\|u_0\|_{\dot B^{\frac Np-1}_{p,1}}.$$
Therefore, if the initial vector field $X_0$ is supported in the  set $K_0$
then $X(t)$ is supported in some set $K_t$ such that  
 \begin{equation*}
 {\rm diam}(K_t) \leq  {\rm diam}(K_0) +  C\sqrt t\,\|u_0\|_{\dot B^{\frac Np-1}_{p,1}}.
 \end{equation*}
\item 
One can prove a similar result (only local in time even in the 2D case) if we remove the smallness assumption on $u_0.$ Moreover, we expect  our method to be appropriate for handling H\"older regularity $\cC^{k,\ep}$ if making suitable assumptions on $\d_{X_0}^j\rho_0$ and $\d_{X_0}^j u_0$ for $j=0,\cdots,k.$ We refrained from writing out here this generalization to keep the presentation as elementary as possible.
\end{itemize}
\medbreak
We end this section with a short presentation of the main ideas of the proof of Theorem \ref{thm:main_dINS}.
The starting point  is  Theorem \ref{thm:wellposed_INS} that provides us with 
a global solution $(\rho,u,\nabla P)$ with $\rho\in L^\infty\Big(\R_+ ; \cM \big(\hB^{\frac{N}{p}-1}_{p,1}\big)\Bigr)$ and $(u,\nabla P)\in\hE_p.$
The flow $\psi$ of $u$ is thus in $\cC^1.$
Our main task  is to prove that $X(t,\cdot)$ remains in $\cC^{0,\ep}$ for all time. 
Now, \eqref{eq:X} ensures that 
\begin{equation*}
X(t,x)= X_0\big(\psi^{-1}_t(x)\big) + \int^t_0 \d_X u \Big(t', \psi_{t'} \big(\psi^{-1}_t(x) \big)\Big)dt'.
\end{equation*}
Because $\psi_t$ is a $\cC^1$ diffeomorphism of $\R^N,$ it  thus suffices 
to show that $\d_Xu$ is in  $L^1_{loc}(\R_+;\cC^{0,\ep}).$ 
    \medbreak
  Note that Equation  \eqref{eq:X} exactly states that $[D_t, \d_X]=0,$ where  $D_t : = \d_t + u \cdot \nabla$ stands for  the material derivative associated to~$u.$  Therefore differentiating  the mass and momentum  equations of $(INS)$ along $X,$ we discover that 
  \begin{equation}\label{eq:dXrho}
D_t\d_X\rho=0
\end{equation}
and that
  \begin{equation}\label{eq:dXu}
\rho D_t \d_X u + \d_X \rho D_t u- \d_X \Delta u + \d_X \nabla P =0.
\end{equation}
On the one hand, Equation \eqref{eq:dXrho} implies that  any (reasonable) regularity assumption 
for $\rho$ along $X$ is  conserved through the evolution. 
On the other hand, even though \eqref{eq:dXu} has some similarities with  the Stokes system, 
  it is not  clear that it does have  the same  smoothing properties, as its
  coefficients have  very low regularity. 
One of the difficulties lies in the product of 
the  discontinuous function $\rho$ with  $D_t\d_Xu,$ as having
only $\d_X u$ in $\cC^{0,\ep}$ suggests that $D_t\d_Xu$ has \emph{negative} regularity.

Our strategy is to  assume that $\rho$ belongs to some 
 multiplier space  corresponding  to the  space
to which $D_t\d_X u$ is expected to belong. As our flow is $\cC^1,$ 
propagating multiplier informations turns out to be rather straightforward (see Lemma \ref{lemma:multiplier}). 
 Thanks to this new
 viewpoint, one can  avoid using the tricky energy estimates
and iterated differentiation along vector fields (requiring higher regularity of the patch) 
that were the cornerstone of the work by {\scshape Liao and Zhang}.
In fact,  \emph{under the smallness assumption \eqref{cdt:u0_small}}
which, unfortunately, forces the fluid to have small density variations, we succeed 
in closing the estimates via only one differentiation along $X.$
 This makes the proof rather elementary and allows us to propagate low H\"older regularity. 

However, even with the above viewpoint, whether one can differentiate 
 terms like $\Delta u$ or $\nabla P$
along $X$ within our critical regularity framework is not totally clear. 
In fact,  as in our recent work \cite{DanZhx2016} 
dedicated to the incompressible Boussinesq system,
we shall   resort to elementary paradifferential calculus
(first introduced by {\scshape Bony} in \cite{Bo1981}).

 Let us briefly recall how it works. Fix some suitably large integer $N_0$ and 
introduce the following \emph{paraproduct} and \emph{remainder} operators:
$$\hT_u v := \sum_{j\in \Z} \hS_{j-N_0}u \hDelta_j v\quad\hbox{and}\quad 
\hR(u,v) \equiv \sum_{j\in\Z} \hDelta_{j} u \thDelta_{j} v := \sum_{\genfrac{}{}{0pt}{}{j\in\Z}{|j-k|\leq N_0}} \hDelta_{j} u \hDelta_{k} v.
$$
It is clear that, formally, any product may be decomposed as follows:
\begin{equation}\label{eq:bony}
uv = \hT_u v + \hT_v u + \hR(u,v).
\end{equation} 
To overcome the problem with the definition (and estimates) of $\d_X\Delta u$ and $\d_X\nabla P,$ 
the idea is to  change  the vector field $X$ to the 
para-vector field operator $\hcT_X \cdot := \dot{T}_{X^k} \d_k \cdot.$ 
This is justified because in our regularity framework  $\hcT_X$ turns out to be 
 \emph{the principal part} of operator $\d_X.$  Typically, 
  $X$ will act on $\nabla P$ or on $\Delta u$ which are in $L^1 \big(\R_+;\dot B^{\frac Np-1}_{p,1}(\R^N)\big).$
Now, suppose that $(X,f) \in \big(\hsC^\ep (\R^N)\big)^N \times \hB^{\frac{N}{p}-1}_{p,1}(\R^N)$  with $(\ep,p)\in]0,1[\times[1,+\infty]$ such that   
\begin{equation}\label{cdt:p_X}
\frac Np \in \,]\,1-\ep,\,2 \,[\ \hbox{ if }\ \div X=0, \quad\hbox{and }\ \frac Np \in \,]\,2-\ep,\,2\,[
\ \hbox{ otherwise.}
\end{equation} 
Then, by virtue of  Bony's decomposition \eqref{eq:bony}, we have
\begin{equation*}
( \hcT_X -\d_X ) f = \hT_{\d_k f} X^k + \d_k \hR(f, X^k) - \hR(f,\div X).
\end{equation*} 
Taking advantage of  classical continuity results for operators $\hT$ and $\hR$ (see \cite{BCD2011}),
we discover that, under Condition \eqref{cdt:p_X}, we have 
\begin{equation}\label{es:dXf_TXf}
\| ( \hcT_X -\d_X )f \|_{\hB^{\frac{N}{p}+\ep-2}_{p,1}} \lesssim  \|f\|_{\hB^{\frac{N}{p}-1}_{p,1}} \|X\|_{\hsC^{\ep}}.
\end{equation}

 Now, incising the term $\d_X u$ by the scalpel  $\hcT_X $ in \eqref{eq:dXu} and  applying $\hcT_X $ to the third  equation of $(INS)$ yield
\begin{equation} \label{eq:dXINS}
\left\{\begin{array}{l}
\rho D_t \hcT_X u-\Delta \hcT_X u + \nabla \hcT_X P = g,\\
\div \hcT_X u = \div (\hT_{\d_k X} u^k -\hT_{\div X} u),\\
\hcT_X u|_{t=0} =  \hcT_{X_0} u_0
\end{array}\right.
\end{equation}
with
  \begin{multline}\label{eq:g_dINS}
  g:= - \rho [\hcT_{X}, D_t]u +[\hcT_X,\Delta] u - [\hcT_X, \nabla] P + (\d_X-\hcT_X) (\Delta u - \nabla P) 
 \\ -\d_X \rho D_t u + \rho (\hcT_X-\d_X)D_t u.
  \end{multline}
  This  surgery leading to  \eqref{eq:dXINS} is quite effective for three reasons.
   Firstly, all the commutator terms in \eqref{eq:g_dINS} are under control (see the Appendix). More importantly, 
   as $D_t\d_Xu$ and $D_t\hcT_Xu$ are in the same Besov space, 
   we can still use the multiplier type regularity on the density that we  pointed out before. Lastly, Condition \eqref{cdt:u0_small} ensures that $(\hcT_X -\d_X)u$ is indeed
   a (small) remainder term.   

Of course,  the divergence free condition need not be satisfied by $\hcT_X u.$
  We shall thus further modify the above  Stokes-like equation  
  so as to enter in the standard  maximal regularity theory.
  Then, under the smallness condition \eqref{cdt:u0_small}, one can 
   close the estimates involving striated regularity along $X,$ globally in time.   
   \medbreak
The rest of the paper unfolds as follows. 
In the next section, we show that Theorem \ref{thm:main_dINS}  entails a general (but not
so explicit) result of  persistence of H\"older regularity for patches of density
in any dimension, under suitable striated regularity assumptions for the velocity. 
We shall then obtain  Theorem \ref{thm:N2}, and an analogous result
in dimension $N=3$. 
Section \ref{sect:3} is devoted to the proof of   all-time
persistence of striated regularity (that is Theorem \ref{thm:main_dINS}). 
Some technical results pertaining to commutators
and  multiplier spaces are postponed in appendix.


 
\section{The density patch problem}

This section is devoted to the proof of results of persistence  of regularity for  patches of constant densities,  
taking Theorem  \ref{thm:main_dINS} for granted. 
Throughout this section we shall use repeatedly the fact (proved in e.g. see \cite[Lemma A.7]{DanM2012}) that for any  (not necessarily bounded) domain $\cD$ of  $\R^N$ with uniform $\cC^{1}$ boundary, we have 
\begin{equation*}
 \mathds{1}_{\mathcal{D}} \in \cM\big(\hB^{s}_{p,r} (\R^N)\big) \quad \mbox{whenever} \quad (s,p,r) \in ]\frac{1}{p}-1,\frac{1}{p}[ \times ]1,\infty[\times [1,\infty].
\end{equation*}  
{}From that property, we deduce that  if $(\ep, p) \in ]0,1[ \times ]N-1, \frac{N-1}{1-\ep}[ ,  $ then the density 
$\rho_0$ given by \eqref{eq:density_patch}
belongs to $\cM\big(\hB^{\frac{N}{p}-1}_{p,r}(\R^N)\big) \cap \cM\big(\hB^{\frac{N}{p}+\ep-2}_{p,r}(\R^N)\big).$ 
\smallbreak

As a start, let us give a result of persistence of regularity, 
under rather general hypotheses.
\begin{prop}\label{prop:NN}  Assume that $\rho_0$ is given by \eqref{eq:density_patch} 
with small enough $\eta$ and 
some domain $\cD_0$ satisfying \eqref{eq:f0}.
Let $u_0$ be a small enough divergence free vector field with coefficients in 
$\dot B^{\frac Np-1}_{p,1}$ for some  $N-1<p<\min\big\{\frac{N-1}{1-\ep},2N\big\}\cdotp$
Consider a family $(X_{\lambda,0})_{\lambda\in\Lambda}$ 
of $\cC^{0,\ep}$ divergence free vector fields tangent
to $\cD_0$ and such that $\d_{X_{\lambda,0}}u_0 \in \dot B^{\frac Np+\ep-2}_{p,1}$ for 
all $\lambda\in\Lambda.$
\smallbreak
Then the  unique solution $(\rho,u,\nabla P)$ of $(INS)$ given by  Theorem \ref{thm:wellposed_INS}
satisfies  the following additional properties:
\begin{itemize}
\item  $\rho(t,\cdot)$ is given by \eqref{eq:density_patcht}, 
\item  all the time-dependent vector fields $X_{\lambda}$ solutions to \eqref{eq:X} with initial data 
$X_{\lambda,0}$ are in $L^\infty_{loc}(\R_+;\cC^{0,\ep})$ and remain tangent to the patch for all time.
\end{itemize}
\end{prop}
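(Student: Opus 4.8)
The plan is to obtain Proposition \ref{prop:NN} almost directly from Theorem \ref{thm:main_dINS}, the only new ingredient being an elementary geometric remark, so I do not expect any real obstacle. First I would fix $p\in\,]N-1,\min\{\frac{N-1}{1-\ep},2N\}[\,$ and observe that for such $p$ both numbers $\frac Np-1$ and $\frac Np+\ep-2$ lie in $]\frac1p-1,\frac1p[$; hence, by the multiplier property of characteristic functions of uniform $\cC^1$ domains recalled at the beginning of this section, $\mathds{1}_{\cD_0}\in\cM(\hB^{\frac Np-1}_{p,1})\cap\cM(\hB^{\frac Np+\ep-2}_{p,1})$ with norms controlled by $\cD_0$. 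Since $\rho_0-1=\eta\,\mathds{1}_{\cD_0}$ is moreover bounded and $N-1\geq\frac N2$ for $N\geq2$, taking $|\eta|$ small in terms of $\cD_0$ and $u_0$ small in $\hB^{\frac Np-1}_{p,1}$ enforces \eqref{cdt:u0_small} and \eqref{cdt:p}, so that Theorem \ref{thm:wellposed_INS} supplies the announced global solution with $(u,\nabla P)\in\hE_p$. As already noted, this yields $\nabla u\in L^1_{loc}(\R_+;\cC_b)$, hence a $\cC^1$, measure-preserving (because $\div u=0$) flow $\psi$; since $u$ is Lipschitz in space, $\rho(t,\cdot)=\rho_0\circ\psi_t^{-1}$ is the unique solution of the mass equation, which is exactly \eqref{eq:density_patcht} with $\cD_t:=\psi(t,\cD_0)$ a simply connected bounded domain inheriting the $\cC^1$ regularity of $\cD_0$ through the diffeomorphism $\psi_t$. (The finer $\cC^{1,\ep}$ regularity of $\cD_t$ is not claimed at this level of generality; in the settings of Theorems \ref{thm:N2} and \ref{thm:N3} it follows by running the argument below with a single field of the form $\nabla^\perp f_0$, or an analogous conormal in higher dimension, for which $X(t,\cdot)=\nabla^\perp(f_0\circ\psi_t^{-1})$ provides a non-vanishing $\cC^{0,\ep}$ conormal to $\d\cD_t$, hence a $\cC^{1,\ep}$ defining function.)

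Next, for each fixed $\lambda$ I would check the striated hypotheses of Theorem \ref{thm:main_dINS} for $X_{\lambda,0}$. Divergence-freeness removes the constraint $\ep>2-\frac Np$, and the density condition is automatic: using $\rho_0-1=\eta\,\mathds{1}_{\cD_0}$, $\nabla\mathds{1}_{\cD_0}=-\,n\,d\sigma_{\d\cD_0}$, and the continuity and tangency of $X_{\lambda,0}$,
\[
\d_{X_{\lambda,0}}\rho_0=\eta\,X_{\lambda,0}^{k}\,\d_k\mathds{1}_{\cD_0}=-\eta\,(X_{\lambda,0}\cdot n)\,d\sigma_{\d\cD_0}=0 ,
\]
which \emph{a fortiori} belongs to $\cM\big(\hB^{\frac Np-1}_{p,1}\to\hB^{\frac Np+\ep-2}_{p,1}\big)$, while $\d_{X_{\lambda,0}}u_0\in\hB^{\frac Np+\ep-2}_{p,1}$ holds by assumption. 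Theorem \ref{thm:main_dINS} then gives the unique global solution $X_\lambda\in C_w(\R_+;\cC^{0,\ep})$ of \eqref{eq:X} with datum $X_{\lambda,0}$, in particular $X_\lambda\in L^\infty_{loc}(\R_+;\cC^{0,\ep})$, and taking the divergence of \eqref{eq:X} shows via \eqref{eq:divX} that $\div X_\lambda(t,\cdot)\equiv0$.

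Finally, to propagate tangency I would argue geometrically: by \eqref{eq:Xt}, $X_\lambda(t,\cdot)=\big((D\psi_t)\,X_{\lambda,0}\big)\circ\psi_t^{-1}$, so for $y=\psi_t(x)\in\d\cD_t$ with $x\in\d\cD_0$ one has $X_\lambda(t,y)=D\psi_t(x)\,X_{\lambda,0}(x)$, which is tangent to $\d\cD_t=\psi_t(\d\cD_0)$ because $X_{\lambda,0}(x)$ is tangent to $\d\cD_0$ and $\psi_t$ is a $\cC^1$ diffeomorphism; equivalently, \eqref{eq:dXrho} with initial value $\d_{X_{\lambda,0}}\rho_0=0$ forces $\d_{X_\lambda}\rho\equiv0$, i.e. $X_\lambda(t,\cdot)\cdot n_t=0$ on $\d\cD_t$. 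As anticipated, the only delicate point in this scheme is the low-regularity justification of the products above and of the identity $\d_{X_{\lambda,0}}\mathds{1}_{\cD_0}=0$, which rests on the multiplier properties of $\mathds{1}_{\cD_0}$ recalled at the start of the section; everything else is a formal consequence of Theorem \ref{thm:main_dINS}.
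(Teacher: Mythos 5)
Your proposal is correct and follows essentially the same route as the paper: check that the choice of $p$ puts both $\frac Np-1$ and $\frac Np+\ep-2$ in the multiplier range $]\frac1p-1,\frac1p[$ for $\mathds{1}_{\cD_0}$, note that tangency gives $\d_{X_{\lambda,0}}\rho_0\equiv0$, and then invoke Theorem \ref{thm:main_dINS} (the divergence-free assumption dispensing with the constraint $\ep>2-\frac Np$) to get $X_\lambda\in L^\infty_{loc}(\R_+;\cC^{0,\ep})$. The only point where you diverge is the propagation of tangency: the paper works with the transported level-set function, using \eqref{eq:Df} to compute $D_t(X_\lambda\cdot\nabla f)=0$, whereas you argue via the pushforward formula \eqref{eq:Xt}, i.e.\ that the $\cC^1$ diffeomorphism $\psi_t$ carries tangent vectors of $\d\cD_0$ to tangent vectors of $\d\cD_t$ (with the conservation of $\d_{X_\lambda}\rho$ as an alternative, which however degenerates when $\eta=0$). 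Both arguments are valid and of comparable length; the paper's computation has the small advantage of staying entirely at the level of the transport equations \eqref{eq:X} and \eqref{eq:Df} without appealing to the representation \eqref{eq:Xt}, while yours makes the geometric mechanism more transparent. Your verification of \eqref{cdt:p} from $p>N-1\geq N/2$ and of \eqref{cdt:u0_small} from the boundedness and multiplier norms of $\eta\,\mathds{1}_{\cD_0}$ matches what the paper does implicitly.
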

\begin{proof}
As pointed out at the beginning of this section, our assumptions on $p$ ensure that
$\rho_0$ is in $\cM(\hB^{\frac Np-1}_{p,1}) \cap \big(\hB^{\frac Np-2+\ep}_{p,1}),$ and 
 \eqref{cdt:u0_small} is fulfilled if $\eta$ and $u_0$ are small enough.
Of course, $\d_{X_{\lambda, 0}}\rho_0\equiv0$ for all $\lambda\in\Lambda$ because 
the vector fields $X_{\lambda,0}$ are tangent to the boundary. 
Therefore, applying Theorem  \ref{thm:main_dINS} ensures that 
all the vector fields  $X_{\lambda}$ are  in $L^\infty_{loc}(\R_+;\cC^{0,\ep}).$
Now, if we consider a  level set function $f_0$ in $\cC^{1,\ep}$ associated to $\cD_0$ (see \eqref{eq:f0})
then the function $f_t:=f_0\circ\psi_t$ is associated to the transported domain 
$\cD_t=\psi_t(\cD_0),$ and easy computations show that
\begin{equation}\label{eq:Df}
D_t \nabla f=-\nabla u\cdot\nabla f\quad\hbox{with }\ (\nabla u)_{ij}=\d_iu^j.
\end{equation}
Therefore, as $X_\lambda$ satisfies \eqref{eq:X}, we have
$$D_t(X_\lambda\cdot\nabla f)=(D_tX_\lambda)\cdot\nabla f+X_\lambda\cdot(D_t\nabla f)=0,$$
which ensures that $X_\lambda$ remains tangent to the patch for all time.
\end{proof}


\subsection{The two-dimensional case}

Here we prove   Theorem \ref{thm:N2}.  
So we assume that $\omega_0=\wt\omega_0\mathds{1}_{\mathcal{D}_0}$
for some small enough function $\wt\omega_0$ 
that can be taken compactly supported and in the nonhomogeneous Besov space  $B^{\alpha}_{\infty,1}(\R^2)$
for some $\alpha\in]0,\ep[,$   with no loss of generality. As we assumed that $u_0$ has some
decay at infinity, it may be computed from $\omega_0$ through the following 
 Biot-Savart law: 
$$
u_0=(-\Delta)^{-1}\nabla^\perp\omega_0.
$$
We claim that $u_0$ belongs to all spaces $\dot B^{\frac2p-1}_{p,1}(\R^2)$
with $p>1.$  Indeed, let us write that
$$
u_0=\dot S_0 u_0+({\rm Id}-\dot S_0)u_0.
$$
Because $\omega_0$ is bounded, compactly supported and mean free, 
it is obvious that $\dot S_0 \omega_0$ is smooth, in all 
Lebesgue spaces and also mean free.   Biot-Savart law thus ensures that $\dot S_0 u_0$ belongs
to all Lebesgue spaces $L^q$ with $q>1$ (as it is smooth and behaves like ${\mathcal O}(|x|^{-2})$
at infinity, due to the mean free property, see e.g. \cite[p. 92]{MB02}).  Hence for any  $1<q<2$ and $p\geq q,$ one may write
$$
\|\dot S_0 u_0\|_{\dot B^{\frac 2p-1}_{p,1}}\lesssim \|\dot S_0u_0\|_{\dot B^{\frac 2p-\frac2q}_{p,\infty}}
\lesssim \|\dot S_0 u_0\|_{L^q}\leq C_{\omega_0}.
$$
As regards the high frequency part of $u_0,$ because the Fourier multiplier $({\rm Id}-\dot S_0)\nabla^\bot
(-\Delta)^{-1}$ is homogeneous of degree $-1$ away from a neighborhood of $0,$ we have
$$\begin{aligned}
\|({\rm Id}-\dot S_0)u_0\|_{\dot B^{\frac 2p-1}_{p,1}}
&=\|({\rm Id}-\dot S_0)\nabla^\perp(-\Delta)^{-1}\omega_0\|_{\dot B^{\frac 2p-1}_{p,1}}\\
&\lesssim\|({\rm Id}-\dot S_0)\omega_0\|_{\dot B^{\frac2p-2}_{p,1}}
\lesssim \|({\rm Id}-\dot S_0)\omega_0\|_{L^p}\lesssim\|\omega_0\|_{L^1\cap L^\infty}.\end{aligned}
$$
Next, consider  the divergence free vector field
$X_0=\nabla^\perp f_0$ where $f_0$ is given by \eqref{eq:f0} and is (with no loss of generality)
compactly supported.  If it is true  that\begin{equation}\label{eq:dX0}
\d_{X_0} u_0\in \dot B^{\frac 2p-2+\ep}_{p,1}\quad\hbox{for some }\ 1<p<\min\Bigl(\frac1{1-\ep},4\Bigr),
\end{equation}
then one can apply  Proposition \ref{prop:NN} which ensures that the transported vector field $X_t$
remains in $\cC^{0,\ep}$ for all $t\geq0$. Now, it is classical that we have $X_t=(\nabla f_t)^\perp$
with $f_t=f_0\circ\psi_t.$ Hence $\cD_t$ has a $\cC^{1,\ep}$ boundary. 
\smallbreak
Let us  establish \eqref{eq:dX0}. 
Of course, by embedding, we have  $X_0$ in $B^\alpha_{\infty,1}.$  
Now,  \eqref{es:dXf_TXf} ensures that for any $p\geq1$ satisfying $\frac2p+\ep-1>0,$ 
\begin{equation}\label{eq:BS0}
\|\hcT_{X_0}u_0-\d_{X_0}u_0\|_{\dot B^{\frac 2p+\ep-2}_{p,1}}\lesssim \|u_0\|_{\dot B^{\frac 2p-1}_{p,1}}
\|X_0\|_{\hsC^\ep}.
\end{equation}
{}From Biot-Savart law, we get
$$
\hcT_{X_0}u_0=\hcT_{X_0}(-\Delta)^{-1}\nabla^\perp\omega_0
=(-\Delta)^{-1}\nabla^\perp\hcT_{X_0}\omega_0+[\hcT_{X_0},(-\Delta)^{-1}\nabla^\perp]\omega_0,
$$
whence  using  Lemma \ref{lemma:ce1},
\begin{equation}\label{eq:BS1}
\| \hcT_{X_0}u_0
-(-\Delta)^{-1}\nabla^\perp\hcT_{X_0}\omega_0\|_{\dot B^\alpha_{p,1}}
\lesssim \|X_0\|_{\dot B^\alpha_{\infty,1}}\|\omega_0\|_{L^p}.
\end{equation}
Next, we notice that
$$\hcT_{X_0}\omega_0-\div(X_0\omega_0)=-\div\bigl(\hT_{\omega_0}X_0+\hR(\omega_0,X_0)\bigr).
$$
Therefore, taking advantage of standard continuity results for $\hT$ and $\hR,$ 
we have 
\begin{equation}\label{eq:BS2}
\|\hcT_{X_0}\omega_0-\div(X_0\omega_0)\|_{\dot B^{\alpha-1}_{p,1}}\lesssim \|\omega_0\|_{L^p}
\|X_0\|_{\dot B^\alpha_{\infty,1}}\ \hbox{ for all }\ p\geq1.
\end{equation}
Finally, because $X_0$ and $\wt\omega_0$ are compactly supported and 
in $B^\alpha_{\infty,1},$  Proposition \ref{prop:besov-holder} and obvious embedding
ensure  that 
$$X_0\quad\hbox{and}\quad \wt\omega_0\ \hbox{ are in }\ \dot B^\alpha_{p,1}\cap L^\infty.$$
Hence, remembering that $\div({X_0}\omega_0)=\div(X_0\,\wt\omega_0\,\mathds{1}_{\mathcal{D}_0}),$ 
that $\div X_0=0$ and that $\d_{X_0}\mathds{1}_{\mathcal{D}_0}=0,$
Corollary \ref{cor:dXfg_3D}  implies that  $\div(X_0\omega_0)$ belongs to  $\dot B^{\alpha-1}_{p,1}.$
    
  Putting \eqref{eq:BS0}, \eqref{eq:BS1} and \eqref{eq:BS2} together, we conclude that
  \eqref{eq:dX0} is fulfilled provided the Lebesgue index $p$ defined by 
  \begin{equation}\label{eq:p2}
  \alpha =\frac2p-2+\ep\end{equation}
  is in  $]1,\min(4,\frac 1{1-\ep})[.$    As $0<\alpha<\ep,$ this  is indeed the case.
    This completes the proof of Theorem \ref{thm:N2}. \qed
 

\subsection{The three-dimensional case}

As a second application of Proposition \ref{prop:NN}, we now want to generalize Theorem \ref{thm:N2}
to the three-dimensional case. Our result reads as follows.
\begin{thm}\label{thm:N3} 
Let $\cD_0$ be a $\cC^{1,\ep}$ simply connected bounded domain of $\R^3$ with 
$\ep\in]0,1[$ and  $\rho_0$ be given by  \eqref{eq:density_patch}
for some small enough $\eta.$  Assume that  the initial velocity $u_0$ has
coefficients in $\cS'_h(\R^3)$ and  
vorticity\footnote{For any point $Y \in \R^3$, we set $X \wedge Y := (X^2 Y^3-X^3Y^2,\, X^3 Y^1-X^1Y^3,\, X^1 Y^2-X^2Y^1 )$ where $X$ stands for an element of  $\R^3$ or  for the  $\nabla$ operator.}
\begin{equation*}
\Omega_0:=\nabla\wedge u_0=\wt{\Omega}_0 \mathds{1}_{\mathcal{D}_0},
\end{equation*}
for some small enough  $\wt{\Omega}_0$ in $ \cC^{0,\delta}(\R^3;\R^3)$ ($\delta  \in ]0,\ep[$) with  $\div \wt{\Omega}_0 =0$ and $\wt{\Omega}_0 \cdot \vec{n}_{_{\cD_0}} |_{\d \cD_0} \equiv 0$ (here $\vec{n}_{_{\cD_0}}$ denotes the outwards unit normal of the domain $\cD_0$).  

There exists a unique solution $(\rho, u,\nabla P)$ to System $(INS)$ with the properties
listed in  Theorem \ref{thm:wellposed_INS} for some suitable $p$ satisfying
\begin{equation}\label{eq:p3}
2<p<\min\biggl(\frac2{1-\ep}, 6\biggr)\cdotp
\end{equation} 
Furthermore,  for all $t\geq0,$ we have \eqref{eq:density_patcht}
 and $\mathcal{D}_t$ remains a simply connected bounded domain of class $\cC^{1,\ep}.$ 
\end{thm}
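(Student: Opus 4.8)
\textbf{Proof plan for Theorem \ref{thm:N3}.} The strategy mirrors the two-dimensional argument of the previous subsection: we reduce the statement to an application of Proposition \ref{prop:NN}, the only real work being the construction of $u_0$ from its vorticity in the right critical Besov space and the verification of the striated-regularity hypotheses for a suitable tangent frame on $\d\cD_0$. First I would recover $u_0$ via the three-dimensional Biot--Savart law $u_0=-(-\Delta)^{-1}\nabla\wedge\Omega_0$, splitting $u_0=\dot S_0u_0+({\rm Id}-\dot S_0)u_0$. Since $\wt\Omega_0$ is compactly supported, bounded, and divergence free (hence $\Omega_0$ is mean free componentwise), the low-frequency part $\dot S_0u_0$ is smooth and decays like $\mathcal O(|x|^{-2})$ at infinity by the same classical argument as in 2D, so it lies in every $\dot B^{\frac3p-1}_{p,1}(\R^3)$ for $1<p$; the high-frequency part is handled by the homogeneity of the truncated operator $({\rm Id}-\dot S_0)\nabla\wedge(-\Delta)^{-1}$, giving $\|({\rm Id}-\dot S_0)u_0\|_{\dot B^{\frac3p-1}_{p,1}}\lesssim\|({\rm Id}-\dot S_0)\Omega_0\|_{\dot B^{\frac3p-2}_{p,1}}\lesssim\|\Omega_0\|_{L^1\cap L^\infty}$. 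Smallness of $\wt\Omega_0$ transfers to smallness of $u_0$ in $\dot B^{\frac3p-1}_{p,1}$, and the range \eqref{eq:p3} is exactly what makes $N-1=2<p<\min(\frac{N-1}{1-\ep},2N)=\min(\frac2{1-\ep},6)$ hold with $N=3$.

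Next I would set up the tangent frame. Unlike the 2D case, where a single $X_0=\nabla^\perp f_0$ suffices, in 3D one needs a family $(X_{\lambda,0})_{\lambda\in\Lambda}$ of $\cC^{0,\ep}$ divergence-free vector fields tangent to $\cD_0$ whose span at each boundary point is the full tangent plane — for instance, starting from a defining function $f_0\in\cC^{1,\ep}$ with $\nabla f_0\neq0$ on $V_0$, one may take the family $X_{(i,j),0}:=\d_if_0\,\d_j-\d_jf_0\,\d_i$ for $i<j$ (cut off away from $V_0$), which are automatically divergence free and tangent to all level sets of $f_0$. Each $X_{\lambda,0}$ is in $\cC^{0,\ep}$, indeed compactly supported and in $B^\alpha_{\infty,1}(\R^3)$ for any $\alpha<\ep$ with no loss of generality; knowing $X_\lambda(t)$ stays in $\cC^{0,\ep}$ for all the frame elements is what pins down the $\cC^{1,\ep}$ regularity of $\cD_t$.

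The remaining point is the striated condition $\d_{X_{\lambda,0}}u_0\in\dot B^{\frac3p+\ep-2}_{p,1}$, which I would establish by the same chain of estimates as in 2D: use \eqref{es:dXf_TXf} to replace $\d_{X_{\lambda,0}}u_0$ by the para-vector field $\hcT_{X_{\lambda,0}}u_0$ up to a $\dot B^{\frac3p+\ep-2}_{p,1}$ remainder (here we need $\frac3p+\ep-1>0$, granted by \eqref{eq:p3}), commute $\hcT_{X_{\lambda,0}}$ through Biot--Savart using Lemma \ref{lemma:ce1} to reduce to $(-\Delta)^{-1}\nabla\wedge\hcT_{X_{\lambda,0}}\Omega_0$ modulo a controlled error, and then write $\hcT_{X_{\lambda,0}}\Omega_0=\div(X_{\lambda,0}\otimes\Omega_0)+\text{(good remainder)}$ — or more precisely, componentwise, reduce the $\dot B^{\alpha-1}_{p,1}$ bound to that of $\div(\wt\Omega_0\,X_{\lambda,0}\,\mathds1_{\cD_0})$. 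The key structural facts $\div X_{\lambda,0}=0$ and $\d_{X_{\lambda,0}}\mathds1_{\cD_0}=0$ (tangency) let Corollary \ref{cor:dXfg_3D} place this divergence in $\dot B^{\alpha-1}_{p,1}$, provided $\wt\Omega_0$ and $X_{\lambda,0}$ are in $\dot B^\alpha_{p,1}\cap L^\infty$ — which follows from Proposition \ref{prop:besov-holder} and embedding, using compact support and $B^\alpha_{\infty,1}$ regularity. Tracking indices, the constraint is that $p$ defined by $\alpha=\frac3p-2+\ep$ lie in $]2,\min(6,\frac2{1-\ep})[$; since $0<\alpha<\ep$ and $\delta<\ep$, this interval is nonempty and the choice is legitimate. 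The main obstacle — and the reason 3D needs a separate statement rather than a one-line corollary — is that here one must juggle \emph{two} small parameters ($\delta$ for $\wt\Omega_0$'s regularity, $\ep$ for the patch) and verify that the single Biot--Savart-derived bound on $\d_{X_{\lambda,0}}u_0$ can be made to hold \emph{simultaneously} with the critical-space bound on $u_0$ for one common $p$; once that compatibility is checked, Proposition \ref{prop:NN} applies verbatim, giving $\rho(t,\cdot)$ as in \eqref{eq:density_patcht}, each $X_\lambda(t)$ tangent to $\cD_t$ and in $\cC^{0,\ep}$, and hence $\cD_t=\psi_t(\cD_0)$ of class $\cC^{1,\ep}$. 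Uniqueness and the regularity of $(\rho,u,\nabla P)$ come directly from Theorem \ref{thm:wellposed_INS}. \qed
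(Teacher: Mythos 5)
Your overall architecture (Biot--Savart, a divergence-free tangent frame built from $f_0$, reduction via \eqref{es:dXf_TXf}, Lemma \ref{lemma:ce1} and Corollary \ref{cor:dXfg_3D}, then Proposition \ref{prop:NN}) is the paper's, and your frame $X_{(i,j),0}=\d_if_0\,\d_j-\d_jf_0\,\d_i$ is exactly the paper's $X_{k,0}=e_k\wedge\nabla f_0$. But the index bookkeeping at the end of your striated-regularity step is wrong. First, Corollary \ref{cor:dXfg_3D} only applies with $\alpha<\delta$ (the function $\wt\Omega_0$ is merely $\cC^{0,\delta}$ with $\delta<\ep$), not $\alpha<\ep$. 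More seriously, your requirement that one single $p$ satisfy both $\alpha=\frac3p-2+\ep\in]0,\ep[$ and $p\in\,]2,\min(6,\frac2{1-\ep})[$ is impossible whenever $\ep\leq\frac12$: for $p>2$ one has $\frac3p-2+\ep<\ep-\frac12\leq0$, so the set you declare nonempty is empty. You cannot repair this by simply ``lowering the index'' either, since homogeneous Besov spaces with the same integrability exponent and different regularity indices are not nested. The paper's proof avoids this with a two-exponent device: it first places $\Omega_0$ in $\hB^{\frac3q-2}_{q,1}$ for $q\in]\frac32,\frac3{2-\delta}[$ (via the $B^{\frac1q}_{q,\infty}$ regularity of $\mathds{1}_{\cD_0}$ and the multiplier property of $\wt\Omega_0$, see \eqref{cdt:q0}), proves the paraproduct/remainder bound and the Corollary bound at an exponent $q$ chosen so that $\frac3q+\ep-2=\delta^*\in]0,\delta[$, and only then transfers to a $p\geq q$ satisfying \eqref{eq:p3} through the embedding $\dot B^{\frac3q+\ep-3}_{q,1}\hookrightarrow\dot B^{\frac3p+\ep-3}_{p,1}$ (condition \eqref{cdt:q1}). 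Without some such device your verification of the hypothesis $\d_{X_{\lambda,0}}u_0\in\dot B^{\frac3p+\ep-2}_{p,1}$ of Proposition \ref{prop:NN} fails for small $\ep$. (Minor, harmless: your claim that $\dot S_0u_0$ lies in every $\dot B^{\frac3p-1}_{p,1}$, $p>1$, overstates what $|x|^{-2}$ decay gives in $\R^3$, namely $L^q$ only for $q>\frac32$; since $p>2$ this does not matter.)

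The second, and conceptually more important, gap is the very last implication: you assert that once each $X_\lambda(t)$ is tangent and in $\cC^{0,\ep}$, the domain $\cD_t$ is $\cC^{1,\ep}$. In 2D this is immediate because $X_t=\nabla^\perp f_t$; in 3D it is not, since the transported fields are $X_k(t)=(\d_{X_{k,0}}\psi)\circ\psi_t^{-1}$ and are \emph{not} of the form $e_k\wedge\nabla f_t$, so their H\"older regularity does not directly transfer to $\nabla f_t$ or to the unit normal. The paper devotes the final part of its proof precisely to this: the cross products $Y_\alpha:=X_i\wedge X_j$ solve the ``cotangent'' transport equation \eqref{eq:Y}, $\d_tY_\alpha+u\cdot\nabla Y_\alpha=-\nabla u\cdot Y_\alpha$, with data $\d_\alpha f_0\,\nabla f_0$; by uniqueness $Y_\alpha(t)=\bigl((\d_\alpha f_0)\circ\psi_t^{-1}\bigr)\nabla f_t$, whence $\bigl|\nabla f_0\circ\psi_t^{-1}\bigr|^2\nabla f_t=\sum_\alpha Y_\alpha(t)\,\d_\alpha f_0\circ\psi_t^{-1}$, and since $\nabla f_0$ does not vanish on $V_0$ and $\psi_t^{\pm1}$ is $\cC^1$, this reconstructs $\nabla f_t\in\cC^{0,\ep}$ near $\d\cD_t$. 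Some argument of this type (equivalently, H\"older continuity of the normal obtained from a nonvanishing cross product of two frame fields) must be supplied; as written, your conclusion ``hence $\cD_t$ is of class $\cC^{1,\ep}$'' is unproved.
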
 
\begin{proof} Without loss of generality, one may assume that $\wt\Omega_0$ is compactly supported
(as multiplying it by a cut-off function with value $1$ on $\cD_0$ will not change $\Omega_0$). 
Like in the 2D case, we first have check that $u_0$ satisfies the assumptions of Proposition \ref{prop:NN}. 
As it is divergence free and decays at infinity (recall that  $u_0\in\cS'_h$), it is  
given by the Biot-Savart law:
\begin{equation}\label{eq:BS3}
u_0=(-\Delta)^{-1}\nabla\wedge\Omega_0, \quad \hbox{with}\,\,\,\Omega_0 = \wt{\Omega}_0 \, \mathds{1}_{\mathcal{D}_0}.
\end{equation}
Let us first check that $u_0$ belongs to 
  $\hB^{\frac{3}{p}-1}_{p,1}$ for some $p$ satisfying Condition \eqref{eq:p3}. 
Recall that the characteristic function of any bounded domain with $\cC^1$ regularity 
belongs to all Besov spaces $B^{\frac1q}_{q,\infty}$ with $1\leq q\leq\infty$ (see e.g. \cite{Tri1992}). 
Hence  combining  Proposition \ref{prop:equiv_besov} and the embedding \eqref{es:nonhom_hom} gives
\begin{equation}\label{embd_D0}
 \mathds{1}_{\mathcal{D}_0} \in \ \cE' \cap b^{\frac{1}{q}}_{q,\infty} \hookrightarrow \hB^{\frac{3}{q}-2}_{q,1}, \quad \hbox{for any} \,\,\, q \in ]1,\infty[ \,\,\, \hbox{and}\,\, b \in \{B, \hB\}.
\end{equation}
Now, using Bony's decomposition and standard continuity results
for operators $\hR$ and $\hT,$ we discover that 
\begin{equation*}
\wt{\Omega}_0 \in \sC^{\delta}_c \hookrightarrow \cM \big(\hB^{\frac{3}{q}-2}_{q,1}\big) \quad \hbox{for any} \,\,\, q \in\Big ]\frac{3}{2},\, \frac{3}{2-\delta}\Big[.   
\end{equation*}
Hence the definition of Multiplier space and \eqref{embd_D0} yield
\begin{equation}\label{cdt:q0}
\Omega_0 =  \wt{\Omega}_0\,  \mathds{1}_{\mathcal{D}_0}\in \hB^{\frac{3}{q}-2}_{q,1} \quad \hbox{for any} \,\,\, q \in \Big]\frac{3}{2},\, \frac{3}{2-\delta}\Big[.
\end{equation} 
As $u_0$ is in $\cS'_h$ and  $(-\Delta^{-1})^{-1}\nabla\wedge$ in \eqref{eq:BS3} is a homogeneous multiplier of degree $-1,$  one can conclude that $$u_0 \in \hB^{\frac{3}{q}-1}_{q,1} \hookrightarrow \hB^{\frac{3}{p}-1}_{p,1}, \quad \mbox{for any $p\geq q$.}$$
Note  that for any value of $\delta$ in $]0,1[,$  one can find some  $p$ satisfying \eqref{eq:p3}.
\medbreak

Next, we consider some (compactly supported) level set function $f_0$ associated to $\d\cD_0$, and the three $\cC^{0,\ep}$ vector-fields $X_{k,0}:=e_k\wedge \nabla f_0$ with $(e_1,e_2,e_3)$ being the canonical basis of $\R^3.$ It is clear that those vector-fields are divergence free and tangent to $\d\cD_0.$  Let us check that we have $\d_{X_{k,0}}u_0\in\dot B^{\frac 3p-2+\ep}_{p,1}$ for some 
$p$ satisfying \eqref{eq:p3}.
 As in the two-dimensional case, this will follow from Biot-Savart law and the special structure of $\Omega_0.$ Indeed,  from  \eqref{es:dXf_TXf} and $\div X_{k,0}=0$, we have 
$$
\|\hcT_{X_{k,0}}u_0-\d_{X_{k,0}}u_0\|_{\dot B^{\frac 3p+\ep-2}_{p,1}}
\lesssim \|u_0\|_{\dot B^{\frac3p-1}_{p,1}}\|X_0\|_{\hsC^\ep},  \,\,\, \forall \,p \in \Big]\frac{3}{2}, \frac{3}{1-\ep}\Big[.
$$
Then \eqref{eq:BS3} yields 
$$
\hcT_{X_{k,0}}u_0=\hcT_{X_{k,0}}(-\Delta)^{-1}\nabla\wedge\Omega_0
=(-\Delta)^{-1}\nabla\wedge\hcT_{X_{k,0}}\Omega_0+[\hcT_{X_{k,0}},(-\Delta)^{-1}\nabla\wedge]\,\Omega_0.
$$
Thanks to Lemma \ref{lemma:ce1} and homogeneity of $(-\Delta^{-1})^{-1}\nabla\wedge$, it is thus sufficient to verify
that  $\hcT_{X_{k,0}}\Omega_0$ belongs to  $\hB^{\frac{3}{p}+\ep-3}_{p,1}$ for some $p$ satisfying \eqref{eq:p3}.  In fact, from the  decomposition 
$$\hcT_{X_{k,0}}\Omega_0-\div(X_{k,0}\Omega_0)
=-\div\bigl(\hT_{\Omega_0}X_{k,0}+\hR(\Omega_0,X_{k,0})\bigr),
$$
and continuity results for $\hR$ and $\hT,$ we  get
\begin{equation*}
\|\hcT_{X_{k,0}}\Omega_0-\div(X_{k,0}\Omega_0)\|_{\dot B^{\frac{3}{q}+\ep-3}_{q,1}}\lesssim \|\Omega_0\|_{\hB^{\frac{3}{q}-2}_{q,1}} \|X_{k,0}\|_{\hsC^\ep}, \,\,\, \hbox{for any}\,\, q \in \Big]\frac{3}{2}, \frac{3}{2-\ep} \Big[.
\end{equation*}
Thus, remembering \eqref{cdt:q0} and $0<\delta < \ep$, we have to choose some  $p$ satisfying \eqref{eq:p3},
 such that the following standard embedding  holds
\begin{equation}\label{cdt:q1}
\dot B^{\frac{3}{q}+\ep-3}_{q,1} \hookrightarrow \dot B^{\frac{3}{p}+\ep-3}_{p,1} \,\,
\hbox{for some}\,\,\, q \in  \Big]\frac{3}{2}, \frac{3}{2-\delta} \Big[ \,\,\,\hbox{with}\,\,\, q \leq p.
\end{equation}
\smallbreak
Now, because $\d_{X_{k,0}}\mathds{1}_{\mathcal{D}_0}\equiv0$ and
$\wt\Omega_0$ is in $B^{\delta_{\star}}_{\infty,1}$ for all  $0 < \delta_{\star}<\delta,$
Corollary \ref{cor:dXfg_3D}   yields, 
 $$\d_{X_{k,0}} \Omega_0=\div(X_{k,0}\otimes\Omega_0)=\div(X_{k,0}\otimes
 \wt\Omega_0\,\mathds{1}_{\mathcal{D}_0})
\in \dot B^{\delta_{\star}-1}_{q,1}\ \hbox{ for all }\ q\geq1.$$
 One can thus conclude that $\d_{X_{k,0}}u_0\in\dot B^{\frac3p-2+\ep}_{p,1}$ for any index $p$ satisfying
$p\geq q$  with  $q$ satisfying  Condition \eqref{cdt:q1} and $\frac3q+\ep-2=\delta^*\in]0,\delta[.$ 

As one can require in addition  $p$ to fulfill  \eqref{eq:p3}, 
Proposition \ref{prop:NN}  applies with the family $(X_{k,0})_{1\leq k\leq3}.$
Denoting by  $(X_k)_{1\leq k\leq3}$ 
the corresponding family of  divergence free vector fields in $\cC^{0,\ep}$ given by  \eqref{eq:X} with initial data $X_{0,k},$ 
and introducing $Y_1:=X_3\wedge X_1,$ $Y_2:=X_3\wedge X_1$
 and $Y_3=X_1\wedge X_2,$  we discover that for $\alpha=1,2,3,$ 
 \begin{equation}\label{eq:Y}
\left\{\begin{array}{l}
\d_t Y_\alpha +u\cdot \nabla Y_\alpha = -  \nabla u\cdot Y_\alpha, \\[1ex]
 (Y_\alpha)|_{t=0}=\d_\alpha f_0\, \nabla f_0.
\end{array}
\right.
\end{equation}
 From \eqref{eq:Df}, it is clear that the time-dependent
 vector field  $\bigl(\d_\alpha f_0(\psi^{-1}_t) \big)\,\nabla f_t$ also satisfies \eqref{eq:Y}, 
 hence we have, by uniqueness, $Y_\alpha (t,\cdot) = \big((\d_\alpha f_0) (\psi^{-1}_t) \big)\nabla f_t.$
So finally,
 $$
 \big|\nabla f_0\circ \psi_t^{-1}\bigr|^2\,\nabla f_t=\sum_{\alpha=1}^3 Y_\alpha(t,\cdot)\,\d_\alpha f_0\circ\psi_t^{-1}.
 $$
  As $\psi_t^{-1}$ is $\cC^1$ and as both  $Y_\alpha$ and  $\nabla f_0$ 
  are in $\cC^{0,\ep},$ one can
  conclude that $ \nabla f_t$ is $\cC^{0,\ep}$ in some  neighborhood  of $\d\cD_0.$
  Therefore $\cD_t$ remains of class $\cC^{1,\ep}$ for all time. 
\end{proof}

\begin{rmk} 
 In the  3-D case, the mean free assumption on initial vorticity is not required, but 
 one cannot  consider constant vortex patterns as in the 2-D case.
 Let us also emphasize that, as for the Boussinesq system studied in \cite{DanZhx2016}, a similar statement may be proved
 in higher dimension. 
\end{rmk}

 
\section{The proof  of persistence of striated regularity}\label{sect:3}

That section is devoted to the proof of Theorem  \ref{thm:main_dINS}.
The first step is to apply  Theorem \ref{thm:wellposed_INS}. From it, we get a unique global solution $(\rho,u,\nabla P)$ with 
$\rho\in \cC_b\big(\R_+;\cM(\dot B^{\frac Np-1}_{p,1})\big)$ and $(u,\nabla P)\in \dot E_p,$
satisfying \eqref{est:up}.
Because the product of functions maps $\dot B^{\frac Np-1}_{p,1}\times\dot B^{\frac Np}_{p,1}$
to $\dot B^{\frac Np-1}_{p,1},$   we deduce that the material derivative 
$D_tu=\d_tu+u\cdot\nabla u$ is also bounded by the right-hand side of \eqref{est:up}.
So finally,
\begin{equation}\label{es:u_INS}
\|(u, \nabla P)\|_{\hE_p} + \|D_t u\|_{L^1_t \big(\hB^{\frac{N}{p}-1}_{p,1}\big)} 
\leq C\|u_0\|_{\hB^{\frac{N}{p}-1}_{p,1} }.
\end{equation}
 
 In order to complete the proof of the theorem, it
is only a matter of showing that the additional multiplier and striated regularity properties are conserved
for all positive times.  In fact, we shall mainly concentrate on the proof of a priori estimates
for the  corresponding norms, just
explaining at the end of this section how a suitable regularization 
process allows to make it rigorous.



\subsection{Bounds involving multiplier norms}

As already pointed out in the introduction, because $\nabla u$ is in $L^1(\R_+;\dot B^{\frac Np}_{p,1})$
and $\dot B^{\frac Np}_{p,1}$ is embedded in $\cC_b,$ the flow $\psi$ of $u$ is $\cC^1$ and  we have 
for all $t\geq0,$ owing to \eqref{est:up},
\begin{equation}\label{es:dflow}
\|\nabla \psi_t^{\pm1}\|_{L^\infty}\leq \exp\biggl(\int_0^t\|\nabla u\|_{L^\infty}\,d\tau\biggr)
\leq C
\end{equation}
for  a suitably large universal constant $C.$
\medbreak
Now, from the mass conservation equation and \eqref{eq:dXrho},  we gather that
$$
\rho(t,\cdot)=\rho_0\circ\psi_t^{-1}\quad\hbox{and}\quad
(\d_X\rho)(t,\cdot)=(\d_{X_0}\rho_0)\circ\psi_t^{-1}. 
$$
Hence $\|\rho(t,\cdot)\|_{L^\infty}$ is time independent.
Furthermore,  Lemma \ref{lemma:multiplier} and Condition \eqref{cdt:p}  guarantee that for all  $t\in\R_+,$
 \begin{eqnarray}\label{es:1-rho_1}
\|\rho(t) -1 \|_{\cM \big(\hB^{\frac{N}{p}-1}_{p,1} \big)} &\!\!\!\!\leq\!\!\!\!& C \|\rho_0 -1 \|_{\cM \big(\hB^{\frac{N}{p}-1}_{p,1}\big)},\\\label{es:1-rho_2}
\|\rho(t) -1 \|_{\cM \big(\hB^{\frac{N}{p}+\ep-2}_{p,1}\big)} &\!\!\!\!\leq\!\!\!\!& C \|\rho_0 -1 \|_{\cM \big(\hB^{\frac{N}{p}+\ep-2}_{p,1}\big)},\\\label{es:dXrho}
\qquad\|(\d_X \rho)(t)\|_{\cM \big(\hB^{\frac{N}{p}-1}_{p,1} \to \hB^{\frac{N}{p}+\ep-2}_{p,1}\big)} &\!\!\!\!\leq\!\!\!\!& C \|\d_{X_0} \rho_0\|_{ \cM \big(\hB^{\frac{N}{p}-1}_{p,1} \rightarrow \hB^{\frac{N}{p}+\ep-2}_{p,1}\big) }.
\end{eqnarray}


\subsection{ Estimates for  the striated regularity}
Recall that $\hcT_X u$ satisfies the Stokes-like system  \eqref{eq:dXINS}. 
As  $\hcT_X u$ need not be divergence free,  to enter into the standard theory, we  set 
 $$ v:= \hcT_X u - w\quad\hbox{with}\quad w:= \hT_{\d_k X} u^k-\hT_{\div X}u.$$
Denoting  
$\widetilde{g} := g- \rho u\cdot \nabla \hcT_X u -(\rho\d_t w - \Delta w)$ with  $g$ defined in \eqref{eq:g_dINS}, we see that  $v$ satisfies:
\begin{equation} \label{eq:v_dINS} \tag{$S$}
\left\{ \begin{array}{l}
\rho \d_t v - \Delta v + \nabla \hcT_X P = \widetilde{g},\\
\div v = 0,\\
v|_{t=0}=v_0.
\end{array}\right.
\end{equation}

We shall decompose the proof of  a priori estimates for striated regularity into three steps.
The first one is dedicated to bounding $\widetilde{g}$ (which mainly requires the commutator estimates
of the appendix). In the second step, we take advantage of the smoothing effect of the heat
flow so as to estimate $v.$ 
In the third step, we revert to $\hcT_X u$ and eventually  bound $X.$ 


\subsubsection*{First step: bounds of ${\wt g}$} Recall that $\widetilde{g} := g- \rho u\cdot \nabla \hcT_X u -(\rho\d_t w - \Delta w)$ with
\begin{equation*}
  g= - \rho [\hcT_{X}, D_t]u +[\hcT_X,\Delta] u - [\hcT_X, \nabla] P + (\d_X-\hcT_X) (\Delta u - \nabla P) -\d_X \rho D_t u + \rho (\hcT_X-\d_X)D_t u.
\end{equation*}

The first term of $g$ may be bounded according to Proposition \ref{prop:ce_dINS} and to the definition of multiplier spaces. We get, under assumption \eqref{cdt:p_X},
\begin{multline}\label{es:g1_dINS}
 \|\rho [\hcT_X, D_t ]u\|_{\hB^{\frac{N}{p}+\ep-2}_{p,1}}  \lesssim \|\rho\|_{\cM\big(\hB^{\frac{N}{p}+\ep-2}_{p,1}\big)}  \Big(  \|u\|_{\hsC^{-1}}\|\hcT_X u\|_{ \hB^{\frac{N}{p}+\ep}_{p,1}} \\
  + \|u\|_{\hB^{\frac{N}{p}+1}_{p,1}} \|\hcT_{X} u\|_{\hsC^{\ep-2}} 
 + \|u\|_{\hB^{\frac{N}{p}+1}_{p,1}} \|u\|_{\hB^{\frac{N}{p}-1}_{p,1}} \|X\|_{\hsC^{\ep}} \Big).
\end{multline}
 
 Next, thanks to the commutator estimates in Lemma \ref{lemma:ce1}, we have
\begin{equation}\label{es:g2_dINS}
 \|[\hcT_X,\Delta] u\|_{\hB^{\frac{N}{p}+\ep-2}_{p,1}} \lesssim   \|\nabla X\|_{\hsC^{\ep-1}}\|\nabla u\|_{\hB^{\frac{N}{p}}_{p,1}},
\end{equation}
\begin{equation}\label{es:g3_dINS}
 \|[\hcT_X, \nabla ]P\|_{\hB^{\frac{N}{p}+\ep-2}_{p,1}} \lesssim  \|\nabla X\|_{\hsC^{\ep-1}} \|\nabla P\|_{\hB^{\frac{N}{p}-1}_{p,1}}.
\end{equation}
Bounding the fourth term of $g$ stems from  \eqref{es:dXf_TXf}: we have
\begin{equation}\label{es:g4_dINS}
\| ( \hcT_X -\d_X ) (\Delta u -\nabla P ) \|_{\hB^{\frac{N}{p}+\ep-2}_{p,1}} \lesssim  \|(\Delta u ,\nabla P) \|_{\hB^{\frac{N}{p}-1}_{p,1}} \|X\|_{\hsC^{\ep}}.
\end{equation}
Then the definition of multiplier spaces yields
\begin{equation}\label{es:g5_dINS}
\|\d_X \rho D_t u\|_{\hB^{\frac{N}{p}+\ep-2}_{p,1}} \lesssim \|\d_X \rho\|_{\cM\big(\hB^{\frac{N}{p}-1}_{p,1} \rightarrow \hB^{\frac{N}{p}+\ep-2}_{p,1} \big)} \|D_t u\|_{\hB^{\frac{N}{p}-1}_{p,1}}.
\end{equation}
Finally, using again   \eqref{es:dXf_TXf}
 and the definition of multiplier spaces,  we may write
\begin{equation}\label{es:g6_dINS}
\| \rho (\hcT_X - \d_X) D_t u\|_{\hB^{\frac{N}{p}+\ep-2}_{p,1}} \lesssim \| \rho\|_{\cM\big( \hB^{\frac{N}{p}+\ep-2}_{p,1} \big)} \|X\|_{\hsC^\ep} \|D_t u\|_{\hB^{\frac{N}{p}-1}_{p,1}}.
\end{equation}
Putting together \eqref{es:g1_dINS} -- \eqref{es:g6_dINS} and integrating with respect to time, 
we end up with
\begin{multline}\label{es:g_dINS}
\|g\|_{L^1_t \big(\hB^{\frac{N}{p}+\ep-2}_{p,1} \big)} \lesssim \!
\int^t_0\! \|\rho\|_{\cM\big(\hB^{\frac{N}{p}+\ep-2}_{p,1}\big)}  \big( \|u\|_{\hsC^{-1}}\|\hcT_X u\|_{ \hB^{\frac{N}{p}+\ep}_{p,1}} \!+\! \|\nabla u\|_{\hB^{\frac{N}{p}}_{p,1}} \|\hcT_{X}u\|_{\hsC^{\ep-2}} \big)\,dt'          \\
 + \! \int_0^t  \|X\|_{\hsC^{\ep}} \Big(\! \big(\|\nabla u\|_{\hB^{\frac{N}{p}}_{p,1}} \|u\|_{\hB^{\frac{N}{p}-1}_{p,1}} 
 \!+\! \|D_t u\|_{\hB^{\frac{N}{p}-1}_{p,1}} \big)\|\rho\|_{\cM\big(\hB^{\frac{N}{p}+\ep-2}_{p,1}\big)} \!+\! \| ( \nabla^2 u, \nabla P)\|_{\hB^{\frac{N}{p}-1}_{p,1}} \Big)dt'   \\
 + \int_0^t\|\d_X \rho\|_{\cM\big(\hB^{\frac{N}{p}-1}_{p,1} \rightarrow \hB^{\frac{N}{p}+\ep-2}_{p,1} \big)} \| D_t u \|_{\hB^{\frac{N}{p}-1}_{p,1}} \,dt'.
\end{multline}

Bounding the second term  of $\widetilde{g}$ is obvious : taking advantage of  Bony's decomposition
\eqref{eq:bony}
and remembering that $\frac{N}{p}+\ep > 1 $ and that  $\div u =0$, we get
\begin{multline}\label{es:tg1_dINS}
\|\rho u\cdot \nabla \hcT_X u\|_{L^1_t \big(\hB^{\frac{N}{p}+\ep-2}_{p,1} \big)}  \lesssim  \int^t_0 \|\rho\|_{\cM\big(\hB^{\frac{N}{p}+\ep-2}_{p,1}\big)}    \big(   \|u\|_{\hsC^{-1}}\|\hcT_{X}u\|_{\hB^{\frac{N}{p}+\ep}_{p,1}}     \\
+\|u\|_{\hB^{\frac{N}{p}+1}_{p,1}}\|\hcT_{X}u\|_{\hsC^{\ep-2}} \big) \,dt'.
\end{multline} 
To bound the  last term  of $\wt g,$  we use the decomposition
$$
\rho \d_t w-\Delta w=  \rho (W_1+W_2)+ W_3,
$$
with 
$$
W_1:=  \hT_{\d_k X}\d_tu^k-\hT_{\div X}\d_tu ,\quad
W_2:= \hT_{\d_k \d_tX}u^k-\hT_{\div\d_tX}u , \quad
W_3:=\Delta\bigl(\hT_{\div X}u-\hT_{\d_k X}u^k\bigr).
$$

Continuity results for the paraproduct and the definition of
$\cM\big(\hB^{\frac{N}{p}+\ep-2}_{p,1}\big)$  ensure that
\begin{eqnarray}\label{eq:W1}
\|\rho W_1 \|_{L^1_t\big(\hB^{\frac{N}{p}+\ep-2}_{p,1}\big)} & \lesssim& \int^t_0 \|\rho\|_{\cM\big(\hB^{\frac{N}{p}+\ep-2}_{p,1}\big)} \|\nabla  X \|_{\hsC^{\ep-1}} \|\d_t u\|_{\hB^{\frac{N}{p}-1}_{p,1}}\,dt',\\\label{eq:W2}
\| \rho W_2 \|_{L^1_t\big(\hB^{\frac{N}{p}+\ep-2}_{p,1}\big)}  &\lesssim& \int^t_0 \|\rho\|_{\cM\big(\hB^{\frac{N}{p}+\ep-2}_{p,1}\big)} \|\d_t X\|_{\hsC^{\ep-2}}\|u\|_{\hB^{{\frac Np}+1}_{p,1}}\,dt',\\\label{eq:W3}
\| W_3\|_{L^1_t\big(\hB^{\frac{N}{p}+\ep-2}_{p,1}\big)} &\lesssim&\int^t_0  \| \nabla X \|_{\hsC^{\ep-1}}\|u\|_{\hB^{\frac{N}{p}+1}_{p,1}}\,dt'.
\end{eqnarray}
To estimate $\d_tX$ in \eqref{eq:W2},  we use the fact that
$$
\d_tX=-u\cdot\nabla X+\d_Xu=-\div(u\otimes X)+\d_Xu.
$$
Hence  using \eqref{eq:bony}, and
continuity results for the remainder and paraproduct operators, we get under Condition \eqref{cdt:p_X},
$$\|\d_tX \|_{\hsC^{\ep-2}}\lesssim 
 \|u\|_{\hB^{\frac{N}{p}-1}_{p,1}} \|X\|_{\hsC^{\ep}}+ \|\d_Xu\|_{\hsC^{\ep-2}}.$$
Therefore,  taking advantage of  \eqref{es:dXf_TXf} yields
\begin{equation}\label{eq:W4}
\| \rho W_2 \|_{L^1_t\big(\hB^{\frac{N}{p}+\ep-2}_{p,1}\big)}   \lesssim \int^t_0  \|\rho\|_{\cM\big(\hB^{\frac{N}{p}+\ep-2}_{p,1}\big)} (\|X\|_{\hsC^{\ep}}\|u\|_{\hB^{\frac{N}{p}-1}_{p,1}} + \|\hcT_X u\|_{\hsC^{\ep-2}}) \|\nabla u\|_{\hB^{\frac{N}{p}}_{p,1}}\, dt'.
\end{equation}
Combining  \eqref{eq:W1}, \eqref{eq:W2} and \eqref{eq:W4}, we eventually obtain 
\begin{multline}\label{es:tg2_dINS}
\|\rho \d_t w - \Delta w \|_{L^1_t\big(\hB^{\frac{N}{p}+\ep-2}_{p,1}\big)} \lesssim \int^t_0 \|\hcT_X u\|_{\hsC^{\ep-2}}  \|\nabla u\|_{\hB^{\frac{N}{p}}_{p,1}}  \|\rho\|_{\cM\big(\hB^{\frac{N}{p}+\ep-2}_{p,1}\big)} \,dt'\\
+\int^t_0 \|X\|_{\hsC^{\ep}} \Big(  \big( \|\rho\|_{\cM\big(\hB^{\frac{N}{p}+\ep-2}_{p,1}\big)} \|u\|_{\hB^{\frac{N}{p}-1}_{p,1}} +1 \big) \|\nabla u\|_{\hB^{\frac{N}{p}}_{p,1}} +  \|\rho\|_{\cM\big(\hB^{\frac{N}{p}+\ep-2}_{p,1}\big)} \|\d_t u\|_{\hB^{\frac{N}{p}-1}_{p,1}} \Big) \, dt'. 
\end{multline}
Putting together estimate \eqref{es:g_dINS}, \eqref{es:tg1_dINS} and \eqref{es:tg2_dINS}, we eventually obtain
\begin{multline}\label{es:tg_dINS}
\|\widetilde{g}\|_{L^1_t\big(\hB^{\frac{N}{p}+\ep-2}_{p,1}\big)} \lesssim  
\!\int^t_0 \!\|\rho\|_{\cM\big(\hB^{\frac{N}{p}+\ep-2}_{p,1}\big)}  \big( \|u\|_{\hsC^{-1}}\|\hcT_X u\|_{ \hB^{\frac{N}{p}+\ep}_{p,1}}\! +\! \|\nabla u\|_{\hB^{\frac{N}{p}}_{p,1}} \|\hcT_{X}u\|_{\hsC^{\ep-2}} \big)\,dt'          \\
 +  \int_0^t  \|X\|_{\hsC^{\ep}} \big( \|\nabla u\|_{\hB^{\frac{N}{p}}_{p,1}} \|u\|_{\hB^{\frac{N}{p}-1}_{p,1}} + \|(\d_t u,D_t u)\|_{\hB^{\frac{N}{p}-1}_{p,1}} \big)\|\rho\|_{\cM\big(\hB^{\frac{N}{p}+\ep-2}_{p,1}\big)} \,dt'               \\
 +  \int_0^t  \|X\|_{\hsC^{\ep}} \|(\nabla^2 u,  \nabla P)\|_{\hB^{\frac{N}{p}-1}_{p,1}}  \,dt'               + \int_0^t\|\d_X \rho\|_{\cM\big(\hB^{\frac{N}{p}-1}_{p,1} \rightarrow \hB^{\frac{N}{p}+\ep-2}_{p,1} \big)} \| D_t u \|_{\hB^{\frac{N}{p}-1}_{p,1}} \,dt'.\\ 
  \end{multline}
  

\subsubsection*{Second step: bounds of ${v}$}  
 
 We now want to bound $v$ in  $\wt{L}^{\infty}_t \big(\hB^{\frac{N}{p}+\ep-2}_{p,1} \big) \cap L^{1}_t \big( \hB^{\frac{N}{p}+\ep}_{p,1} \big),$  knowing \eqref{es:tg_dINS}.
 This will  follow from the smoothing properties of the heat flow. 
 More precisely,  introduce the projector $\mathbb P$ over divergence-free vector fields,
  and apply  $\mathbb{P}\hDelta_j$  (with $j\in\Z$) to the equation \eqref{eq:v_dINS}. We get
\begin{equation*}
\left\{ \begin{array}{l}
\d_t \hDelta_jv - \Delta \hDelta_jv = \mathbb{P}\hDelta_j(\widetilde{g} + (1-\rho)\d_t v)\\[1ex]
\hDelta_jv|_{t=0} = \hDelta_jv_{0}.
\end{array}\right.
\end{equation*}
 Lemma 2.1 in  \cite{Che1999} implies that if $p\in [1, \infty],$
\begin{align*}
\|\hDelta_jv(t)\|_{L^{p}} \leq e^{-ct 2^{2j}} \|\hDelta_jv_{0}\|_{L^{p}} + C \int^{t}_{0} e^{-c(t-t') 2^{2j}} 
\|\hDelta_j(\widetilde{g} + (1-\rho)\d_t v)(t')\|_{L^{p}}\,dt'.
\end{align*}
Therefore,  taking the supremum over $j\in \Z,$  using the fact that
\begin{equation*}
\d_t v = \Delta v  + \mathbb{P} \big(\widetilde{g} + (1-\rho)\d_t v\big)
\end{equation*}
and that $\mathbb{P}:\hB^{\frac{N}{p}+\ep-2}_{p,1}\to\hB^{\frac{N}{p}+\ep-2}_{p,1},$
we find that 
\begin{multline}\label{es:v1_dINS}
\|v\|_{\wt{L}^{\infty}_t \big(\hB^{\frac{N}{p}+\ep-2}_{p,1} \big)} 
+ \|v\|_{L^{1}_t \big(\hB^{\frac{N}{p}+\ep}_{p,1}\bigr)} + 
\|\d_tv\|_{L^{1}_t \big(\hB^{\frac{N}{p}+\ep-2}_{p,1}\big)} \\
\lesssim \|v_0\|_{\hB^{\frac{N}{p}+\ep-2}_{p,1} } + \|\wt{g}\|_{L^{1}_t \big(\hB^{\frac{N}{p}+\ep-2}_{p,1} \big)}+ \|(1-\rho)\d_t v\|_{L^{1}_t \big(\hB^{\frac{N}{p}+\ep-2}_{p,1} \big)}.
\end{multline}
The smallness condition \eqref{cdt:u0_small} combined with Inequality \eqref{es:1-rho_2} ensure 
that the last term of \eqref{es:v1_dINS} may  be absorbed by the left-hand side, and we thus 
end up with 
\begin{equation*}
\|v\|_{\wt{L}^{\infty}_t \big(\hB^{\frac{N}{p}+\ep-2}_{p,1} \big) \cap L^{1}_t \big(\hB^{\frac{N}{p}+\ep}_{p,1} \big)} + \|\d_t v\|_{L^{1}_t \big(\hB^{\frac{N}{p}+\ep-2}_{p,1} \big)} \lesssim \|v_0\|_{\hB^{\frac{N}{p}+\ep-2}_{p,1} } + \|\wt{g}\|_{L^{1}_t \big(\hB^{\frac{N}{p}+\ep-2}_{p,1} \big)}.
\end{equation*}
Next, we use the fact that by  definition of $v_0,$
\begin{eqnarray*}
v_0 &=& \hcT_{X_0} u_0 - \hT_{\d_k X_0} u_0^k + \hT_{\div X_0}u_0 \\
&=& \d_{X_0} u_0- \hT_{\d_k u_0}X^k_0 - \d_k \hR(X^k_0, u_0) + \hR(\div X_0, u_0) - \hT_{\d_k X_0} u_0^k + \hT_{\div X_0}u_0.
\end{eqnarray*}
Hence continuity results for the paraproduct yield, under Condition \eqref{cdt:p_X},
\begin{equation*}
\|v_0\|_{\hB^{\frac{N}{p}+\ep-2}_{p,1} } \lesssim \|\d_{X_0} u_0\|_{\hB^{\frac{N}{p}+\ep-2}_{p,1} } +\| X_0\|_{\hsC^{\ep}} \|u_0\|_{\hB^{\frac{N}{p} -1}_{p,1}}.
\end{equation*}
Thus 
\begin{multline} \label{es:v_dINS}
\|v\|_{\wt{L}^{\infty}_t \big(\hB^{\frac{N}{p}+\ep-2}_{p,1} \big) \cap L^{1}_t \big(\hB^{\frac{N}{p}+\ep}_{p,1} \big)} + \|\d_t v\|_{L^{1}_t \big(\hB^{\frac{N}{p}+\ep-2}_{p,1} \big)}  \lesssim \|\d_{X_0} u_0\|_{\hB^{\frac{N}{p}+\ep-2}_{p,1} } \\+\| X_0\|_{\hsC^{\ep}} \|u_0\|_{\hB^{\frac{N}{p} -1}_{p,1}} 
+ \|\wt{g}\|_{L^{1}_t \big(\hB^{\frac{N}{p}+\ep-2}_{p,1} \big)}.
\end{multline}

\subsubsection*{Third step: bounds for striated regularity}
 Remembering that
$$\hcT_X u=v+w\quad\hbox{with}\quad w=\hT_{\d_k X}u^k - \hT_{\div X}u,$$
it is now easy to bound the following quantity:
\begin{equation*}
\sH(t) := \|\hcT_X u\|_{\wt{L}^{\infty}_t \big(\hB^{\frac{N}{p}+\ep-2}_{p,1} \big)}+\|\hcT_X u\|_{L^{1}_t \big( \hB^{\frac{N}{p}+\ep}_{p,1} \big)}+\|\nabla \hcT_X P\|_{L^{1}_t \big( \hB^{\frac{N}{p}+\ep-2}_{p,1} \big)}.
\end{equation*}
Indeed,  we have
\begin{equation}\label{eq:dPi}
\nabla \hcT_X P= ({\rm Id}- \mathbb{P}) (\widetilde{g} -\rho \d_t v), 
\end{equation} 
and thus $\|\nabla \hcT_X P\|_{L^{1}_t \big( \hB^{\frac{N}{p}+\ep-2}_{p,1} \big)}$
may be bounded by the right-hand side of \eqref{es:v_dINS}. Note also that 
 continuity results for paraproduct operators guarantee that
$$
\begin{aligned}
\|w\|_{\wt{L}^{\infty}_t \big(\hB^{\frac{N}{p}+\ep-2}_{p,1} \big)} &\lesssim
 \|u\|_{ \wt{L}^{\infty}_t (\hB^{\frac{N}{p}-1}_{p,1})}\|X\|_{L^{\infty}_t (\hsC^{\ep})},\\ 
 \|w\|_{L^{1}_t \big(\hB^{\frac{N}{p}+\ep}_{p,1} \big)} &\lesssim 
  \int^t_0 \|u\|_{\hB^{\frac{N}{p}+1}_{p,1}} \|\nabla X\|_{\hsC^{\ep-1}}\,dt'.
\end{aligned}
$$
Hence we have
\begin{multline}\label{es:w_dINS1}
\sH(t)\lesssim  \|\d_{X_0} u_0\|_{\hB^{\frac{N}{p}+\ep-2}_{p,1} } 
+\| X_0\|_{\hsC^{\ep}} \|u_0\|_{\hB^{\frac{N}{p} -1}_{p,1}} 
+ \|\wt{g}\|_{L^{1}_t \big(\hB^{\frac{N}{p}+\ep-2}_{p,1} \big)}\\
+ \|u\|_{ \wt{L}^{\infty}_t (\hB^{\frac{N}{p}-1}_{p,1})\cap L^1_t(\hB^{\frac Np+1}_{p,1})}
\|X\|_{L_t^\infty(\hsC^\ep)}.
\end{multline}
Because  $X$ satisfies \eqref{eq:X},  standard H\"older estimates for transport equations imply that 
$$
\| X \|_{L^{\infty}_t (\hsC^{\ep})}  \leq \|X_0\|_{\hsC^{\ep}} + \int^t_0 \|\nabla u\|_{L^\infty} \| X \|_{\hsC^{\ep}}\, dt' + \int_0^t\|\d_X u\|_{\hsC^{\ep}}\,dt'.
$$
Now, recall that
$$
\d_Xu-\dot\cT_Xu=\dot T_{\d_ku}X^k+\dot R(\d_ku,X^k)
$$
whence, using standard continuity results for operators $\dot T$ and $\dot R,$ and embedding,
\begin{equation}\label{eq:paraX}
\|\dot\cT_Xu-\d_X u\|_{\hsC^{\ep}}\lesssim 
\|\dot\cT_Xu-\d_X u\|_{\hB^{\frac{N}{p}+\ep}_{p,1}}\lesssim 
\|\nabla u\|_{\dot B^{\frac Np}_{p,1}}\|X\|_{\hsC^\ep}.
\end{equation}
Therefore we have
\begin{equation}\label{es:X_dINS}
\| X \|_{L^{\infty}_t (\hsC^{\ep})}
 \leq \|X_0\|_{\hsC^{\ep}} + \int^t_0 \|\nabla u\|_{\hB^{\frac{N}{p}}_{p,1}}  \| X \|_{\hsC^{\ep}}\, dt' + \|\hcT_X u\|_{L^1_t \big(\hB^{\frac{N}{p}+\ep}_{p,1}\big)}.
\end{equation}
Then,  using \eqref{es:u_INS} and plugging the above inequality in \eqref{es:w_dINS1}, we get
$$\displaylines{
\sH(t)\lesssim  \|\d_{X_0} u_0\|_{\hB^{\frac{N}{p}+\ep-2}_{p,1} } 
+\| X_0\|_{\hsC^{\ep}} \|u_0\|_{\hB^{\frac{N}{p} -1}_{p,1}} 
+ \|\wt{g}\|_{L^{1}_t \big(\hB^{\frac{N}{p}+\ep-2}_{p,1})} \hfill\cr\hfill
+\|u_0\|_{\dot B^{\frac Np-1}_{p,1}}\biggl(\|\hcT_X u\|_{L^1_t \big(\hB^{\frac{N}{p}+\ep}_{p,1}\big)}
+ \int^t_0 \|\nabla u\|_{\hB^{\frac{N}{p}}_{p,1}}  \| X \|_{\hsC^{\ep}}\, dt'\biggr)\cdotp} 
$$
Choosing $c$ small enough in \eqref{cdt:u0_small}, we see that the first term of 
the second line may be absorbed by the left-hand side. 
Therefore, setting  $$\sK(t) := \sH(t)+\| X \|_{L^{\infty}_t (\hsC^{\ep})}$$
and using again \eqref{es:X_dINS} and the smallness of $u_0,$
$$
\sK(t)\lesssim  \|\d_{X_0} u_0\|_{\hB^{\frac{N}{p}+\ep-2}_{p,1} } 
+\| X_0\|_{\hsC^{\ep}} + \|\wt{g}\|_{L^{1}_t \big(\hB^{\frac{N}{p}+\ep-2}_{p,1})}
+ \int^t_0 \|\nabla u\|_{\hB^{\frac{N}{p}}_{p,1}}  \| X \|_{\hsC^{\ep}}\, dt'.
$$
In order to close the estimates,  it suffices to bound $\wt g$ by means of \eqref{es:tg_dINS}. 
Then the above inequality becomes, after using \eqref{es:1-rho_2} and \eqref{es:dXrho},
$$
\displaylines{\sK(t)\lesssim  \|\d_{X_0} u_0\|_{\hB^{\frac{N}{p}+\ep-2}_{p,1} } 
+\| X_0\|_{\hsC^{\ep}}\hfill\cr\hfill +
\int^t_0 \!\|\rho\|_{\cM\big(\hB^{\frac{N}{p}+\ep-2}_{p,1}\big)}  \big( \|u\|_{\hsC^{-1}}\|\hcT_X u\|_{ \hB^{\frac{N}{p}+\ep}_{p,1}}\! +\! \|\nabla u\|_{\hB^{\frac{N}{p}}_{p,1}} \|\hcT_{X}u\|_{\hsC^{\ep-2}} \big)\,dt'          \cr
 +  \int_0^t  \|X\|_{\hsC^{\ep}} \big( \|\nabla u\|_{\hB^{\frac{N}{p}}_{p,1}} \|u\|_{\hB^{\frac{N}{p}-1}_{p,1}} + \|(\d_t u,D_t u)\|_{\hB^{\frac{N}{p}-1}_{p,1}} \big)\|\rho_0\|_{\cM\big(\hB^{\frac{N}{p}+\ep-2}_{p,1}\big)} \,dt'               \cr
 +  \int_0^t  \|X\|_{\hsC^{\ep}} \|(\nabla^2 u,  \nabla P)\|_{\hB^{\frac{N}{p}-1}_{p,1}}  \,dt' 
          + \int_0^t\|\d_{X_0}\rho_0\|_{\cM\big(\hB^{\frac{N}{p}-1}_{p,1} \rightarrow \hB^{\frac{N}{p}+\ep-2}_{p,1} \big)} \| D_t u \|_{\hB^{\frac{N}{p}-1}_{p,1}} \,dt'.}
  $$
  The smallness of $\rho_0$ and $u_0$  implies that the second line may be 
  absorbed by the l.h.s. Therefore using the bounds for $\d_t u$
  and $D_tu$ in \eqref{es:u_INS},  we eventually get
  $$
\displaylines{\sK(t)\lesssim  \|\d_{X_0} u_0\|_{\hB^{\frac{N}{p}+\ep-2}_{p,1} } 
+\| X_0\|_{\hsC^{\ep}} +
\|\rho_0\|_{\cM\big(\hB^{\frac{N}{p}+\ep-2}_{p,1}\big)}\|u_0\|_{\dot B^{\frac Np-1}_{p,1}}
\biggl(1+\int_0^t \|\nabla u\|_{\hB^{\frac{N}{p}}_{p,1}}\|X\|_{\hsC^{\ep}}\,d\tau\biggr)\hfill\cr\hfill
+ \int_0^t  \|X\|_{\hsC^{\ep}} \|(\nabla^2 u,  \nabla P)\|_{\hB^{\frac{N}{p}-1}_{p,1}}  \,dt' + 
\|\d_{X_0} \rho_0\|_{\cM\big(\hB^{\frac{N}{p}-1}_{p,1} \rightarrow \hB^{\frac{N}{p}+\ep-2}_{p,1} \big)}
\int_0^t \|D_tu \|_{\hB^{\frac{N}{p}-1}_{p,1}}\,d\tau.}
$$
It is now easy to conclude by means of Gronwall lemma and \eqref{es:u_INS}. Using
once again the smallness of $u_0,$ we get
\begin{multline}\label{eq:sK}
\sK(t)\lesssim  \|\d_{X_0} u_0\|_{\hB^{\frac{N}{p}+\ep-2}_{p,1} } 
+\| X_0\|_{\hsC^{\ep}} \\+
\bigl(\|\d_{X_0} \rho_0\|_{\cM\big(\hB^{\frac{N}{p}-1}_{p,1} \rightarrow \hB^{\frac{N}{p}+\ep-2}_{p,1} \big)}
+\|\rho_0\|_{\cM\big(\hB^{\frac{N}{p}+\ep-2}_{p,1}\big)}\bigr)\|u_0\|_{\dot B^{\frac Np-1}_{p,1}}.
\end{multline}

{}From \eqref{eq:paraX}, we gather  that  $\d_Xu$ is bounded by the right-hand side of \eqref{eq:sK}. 
Next, in order to control the whole nonhomogeneous H\"older norm of $X,$ it suffices to remember that 
$$
\|X\|_{\cC^{0,\ep}}=\|X\|_{L^\infty}+\|X\|_{\dot\sC^\ep}
$$
and that Relation \eqref{eq:Xt} together with \eqref{es:dflow} directly yield
$$
\|X_t\|_{L^\infty}\leq\|\d_{X_0}\psi_t\|_{L^\infty}\leq C\|X_0\|_{L^\infty}.
$$
Finally,   to estimate $\d_X\nabla P,$ we use Inequality \eqref{es:dXf_TXf} and get
$$
\|\d_X\nabla P-\nabla\hcT_XP\|_{L_t^1\big(\hB^{\frac{N}{p}+\ep-2}_{p,1}\big)}\lesssim \|X\|_{L_t^\infty(\hsC^\ep)}\|\nabla P\|_{L^1_t(\hB^{\frac{N}{p}-1}_{p,1})}.$$
Therefore $\|\d_X\nabla P\|_{L_t^1\big(\hB^{\frac{N}{p}+\ep-2}_{p,1}\big)}$  
may be bounded like $\sK(t).$


\subsection{The regularization process}

In all the above computations, we  implicitly assumed that 
$X$ and $\d_Xu$ were in $L^\infty_{loc}(\R_+;\cC^{0,\ep})$ and 
$L^1_{loc}(\R_+;\cC^{0,\ep}),$ respectively.
However, Theorem \ref{thm:wellposed_INS} just ensures continuity of those vector-fields, not
H\"older regularity.  

 To overcome that difficulty, one may smooth out the initial velocity (not the density, not to
 destroy the multiplier hypotheses) by setting for example $u_0^n:=\dot S_nu_0.$ 
       Then Condition \eqref{cdt:u0_small} is satisfied by $(\rho_0,u_0^n)$ 
       and, as in addition        
       $u_0^n$ belongs to all Besov spaces  $\dot B^{\frac Np-1}_{\wt p,r}$ with $\wt p\geq p$ and $r\geq1,$
       one can 
       apply\footnote{That paper is dedicated to the half-space, but having the same result
       in the whole space setting is much easier.}  
        \cite[Th. 1.1]{DZ} for solving (INS)  with initial data $(\rho_0,u_0^n).$
    This provides us with a unique        
             global solution $(\rho^n,u^n,\nabla P^n)$ which, among others, satisfies
       $$\nabla u^n\in L^r(\R_+;\dot B^{\frac Np}_{\wt p,r})
       \quad\hbox{for all }\    r\in]1,\infty[\ \hbox{ and }\ 
       \max\biggl(p,\frac{Nr}{3r-2}\biggr)\leq \wt p \leq \frac {Nr}{r-1}\cdotp$$
       By taking $r$ sufficiently close to $1$ and using embedding, we see that 
       this implies that $\nabla u^n$ is in $L^1_{loc}(\R_+;\hsC^{0,\delta})$
       for all $0<\delta<1$ and thus    
       the corresponding flow $\psi^n$  is (in particular)  in $\cC^{1,\ep}.$ This  ensures, thanks to 
 \eqref{eq:Xt}, that $X^n$ is in $L^\infty_{loc}(\R_+;\cC^{0,\ep})$ and thus that
 $\d_{X^n}u^n$ is in $L^1_{loc}(\R_+;\cC^{0,\ep}).$
 
 {}From the previous steps and the fact that the data $(\rho_0,u_0^n)$ satisfy \eqref{cdt:u0_small} uniformly,
 we get  uniform  bounds for $\rho^n,$ $u^n,$ $\nabla P^n$ and $X^n,$ and standard arguments
 thus allow to show that $u^n$ tends to $u$ in $L^1_{loc}(\R_+;L^\infty)$ and
 thus $(\psi^n-\psi)\to0$ in $L^\infty_{loc}(\R_+;L^\infty).$ 
 Interpolating with the uniform bounds and using standard functional analysis 
 arguments, one can eventually conclude that 
  $X^n\to X$ in $L^\infty_{loc}(\R_+;\cC^{0,\ep'})$ for all $\ep'<\ep$
  (and similar results for $(u^n)_{n\in\N}$) and that  all the estimates of the previous steps are satisfied. 
  The details are left to the reader.\qed

 
\begin{appendix}

\section{Multiplier spaces}

The following  relationship between the nonhomogeneous Besov spaces $B^{s}_{p,r}(\R^N)$  
and  the homogeneous Besov spaces $\hB^{s}_{p,r}(\R^N)$ for 
\emph{compactly supported} functions or distributions  has been established in \cite[Section 2.1]{DanM2015}.
\begin{prop}\label{prop:equiv_besov}
Let $(p,r) \in [1,\infty]^2$ and $s > -\frac{N}{p'}:= -N(1-\frac{1}{p})$ (or just $s\geq-\frac N{p'}$ if $r=\infty$).
 For any $u$ in the set  $\cE'(\R^N)$  of compactly supported distributions on $\R^N,$  we have
\begin{equation*}
u \in  B^{s}_{p,r}(\R^N) \Longleftrightarrow u\in \hB^{s}_{p,r}(\R^N).
\end{equation*}
Moreover, there exists a constant $C=C(s,p,r,N, {\rm Supp}\, u)$ such that
\begin{equation*}
C^{-1}\|u\|_{\hB^{s}_{p,r}} \leq \|u\|_{B^{s}_{p,r}} \leq C \|u\|_{\hB^{s}_{p,r}}.
\end{equation*}
\end{prop}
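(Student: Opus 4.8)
\emph{Plan.} The comparison is purely a low-frequency matter. Indeed $\Delta_j=\dot\Delta_j$ for every $j\ge0$ while $\Delta_{-1}=\chi(D)=\dot S_0$, so the blocks of $\|u\|_{B^s_{p,r}}$ and $\|u\|_{\dot B^s_{p,r}}$ carried by frequencies $\gtrsim1$ coincide term by term; moreover every $u\in\cE'(\R^N)$ lies in $\cS'_h$, so its homogeneous blocks are well defined. Fixing a ball $B_R\supseteq\mathrm{Supp}\,u$, it therefore suffices to show, with constants depending only on $s,p,r,N$ and $R$, that $\|\dot S_0u\|_{L^p}$ and $\big\|\big(2^{js}\|\dot\Delta_ju\|_{L^p}\big)_{j\le-1}\big\|_{\ell^r}$ control each other. (I will treat the main range $p>1$; the borderline case $p=1$, which plays no role in the sequel, needs only minor extra care.)

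\emph{The key estimate.} Everything rests on the low-frequency decay, valid for $u\in\cE'$ supported in $B_R$:
$$\|\dot\Delta_ju\|_{L^p}\ \lesssim\ 2^{jN/p'}\,\|u\|_{\cE',m}\qquad(j\le0),$$
where $\|\cdot\|_{\cE',m}$ is a fixed seminorm adapted to $B_R$ and to the (finite) order $m$ of $u$. I would get it by splitting $\R^N=B_{3R}\cup B_{3R}^c$ in $\dot\Delta_ju=\cF^{-1}\varphi(2^{-j}\cdot)*u$: on $B_{3R}$ the result is $\lesssim2^{jN}\|u\|_{\cE',m}$ pointwise, while on $B_{3R}^c$ the Schwartz decay of $\cF^{-1}\varphi$ and the inequality $\mathrm{dist}(x,B_R)\gtrsim|x|$ give $\lesssim2^{jN}(1+2^j|x|)^{-M}\|u\|_{\cE',m}$; taking $L^p$ norms and using $Np/p'=N(p-1)$ produces exactly the factor $2^{jN/p'}$. (When $u$ is an $L^1$ function one may instead invoke Bernstein's inequality together with $\|\dot\Delta_ju\|_{L^1}\le\|\cF^{-1}\varphi\|_{L^1}\|u\|_{L^1}$, the spectrum of $\dot\Delta_ju$ being contained in a ball of radius $\sim2^j$.) The seminorm is then absorbed into the available Besov norm by testing $u$ against $\phi\in C_c^\infty(B_{2R})$: writing $\langle u,\phi\rangle=\sum_j\langle\dot\Delta_ju,\thDelta_j\phi\rangle$ and using that $\|\thDelta_j\phi\|_{L^{p'}}$ decays like $2^{-jM}$ as $j\to+\infty$ (smoothness of $\phi$) and like $2^{jN/p}$ as $j\to-\infty$ (compact support of $\phi$), one obtains $\|u\|_{\cE',m}\lesssim\|u\|_{b^s_{p,r}}$ for $b\in\{B,\dot B\}$ in the range $-N/p'<s<N/p$; the remaining values of $s$ reduce to this after differentiating $u$ enough times, using that compact support yields $\|u\|_{\cE',m}\lesssim\|\nabla^ku\|_{\cE',m'}$ together with $\nabla^ku\in b^{s-k}_{p,r}$.

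\emph{Conclusion.} Combining the two facts, for $j\le0$ one has $2^{js}\|\dot\Delta_ju\|_{L^p}\lesssim2^{j(s+N/p')}\|u\|_{b^s_{p,r}}$; and since $s+N/p'>0$ (or $\ge0$ when $r=\infty$), the sequence $\big(2^{j(s+N/p')}\big)_{j\le0}$ lies in $\ell^r$ (resp. in $\ell^\infty$). Taking $b=B$ gives $\|u\|_{\dot B^s_{p,r}}\lesssim\|u\|_{B^s_{p,r}}$ at once (the $j\ge0$ parts being identical); taking $b=\dot B$ gives $\|\dot S_0u\|_{L^p}\le\sum_{j\le-1}\|\dot\Delta_ju\|_{L^p}\lesssim\|u\|_{\dot B^s_{p,r}}$, hence $\|u\|_{B^s_{p,r}}\lesssim\|u\|_{\dot B^s_{p,r}}$. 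Both the quantitative norm equivalence and the qualitative statement $u\in B^s_{p,r}\Leftrightarrow u\in\dot B^s_{p,r}$ follow.

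\emph{Main obstacle.} The delicate point --- and the reason the sharp threshold $s>-N/p'$ appears --- is precisely this low-frequency decay $\|\dot\Delta_ju\|_{L^p}\lesssim2^{jN/p'}$ \emph{combined with} the conversion of the bare distributional seminorm of $u$, which is what the kernel estimates naturally produce, into the Besov norm provided by the hypotheses. The rest is Littlewood--Paley bookkeeping.
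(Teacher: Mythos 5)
Your proof is sound, but note that the paper offers no argument of its own for this proposition: it is imported from \cite[Section 2.1]{DanM2015}, so the only comparison available is with that cited proof, which rests on the same two observations you use --- the blocks with $j\ge 0$ coincide, and compact support yields the low-frequency gain $\|\hDelta_j u\|_{L^p}\lesssim 2^{jN/p'}$ for $j\le 0$ --- with the threshold $s>-\frac N{p'}$ (resp. $s\ge-\frac N{p'}$ if $r=\infty$) entering exactly where you place it. Three points of your sketch should be tightened, none of them an actual gap. First, run both halves of the argument with a dual seminorm of one and the same, sufficiently large, order $M=M(s,p,r,N)$ rather than the order $m$ of $u$: the kernel estimate costs nothing for large $M$, since each extra derivative only brings a factor $2^{j|\alpha|}\le 1$ when $j\le 0$, whereas your pairing $\langle u,\phi\rangle=\sum_j\langle\hDelta_j u,\thDelta_j\phi\rangle$ controls the action of $u$ only against test functions measured in $C^M$ with $M$ large compared to $s$; for the nonhomogeneous norm, using instead $\sum_{j\ge-1}\langle\Delta_j u,\cdot\rangle$ removes the restriction $s<\frac Np$ altogether. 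Second, for the direction $\|u\|_{B^s_{p,r}}\lesssim\|u\|_{\hB^s_{p,r}}$ it is cleaner --- and it settles the case $p=1$, where $\sum_{j\le-1}2^{jN/p'}$ does not converge --- to apply your kernel argument directly to $\hS_0=\chi(D)$, whose kernel is Schwartz, rather than to sum the block norms over $j\le-1$. Third, the reduction of the range $s\ge\frac Np$ relies on the support-dependent inequality $\|u\|_{\cE',M}\lesssim\|\Delta^k u\|_{\cE',M'}$, which deserves a line of justification: write $\phi=\Delta^k(E_k*\phi)$ with $E_k$ a fundamental solution of $\Delta^k$ and cut off near ${\rm Supp}\,u$. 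With these routine adjustments your argument is complete and consistent with the proof the paper refers to.
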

A simple consequence of Proposition \ref{prop:equiv_besov} and of standard embeddings for nonhomogeneous Besov spaces is that for any  $(s,p,r)$ as above, we have  
\begin{equation}\label{es:nonhom_hom}
\cE'(\R^N) \cap  \hB^{s+\delta}_{p,r}(\R^N) \hookrightarrow  \cE'(\R^N) \cap  \hB^{s}_{p,r}(\R^N) \quad \mbox{for any } ~\delta>0.
\end{equation}
We also used the following statement:
\begin{prop}\label{prop:besov-holder}
 Let $(p,s)$ be arbitrary in $[1,\infty]\times\R.$ 
 Then for all $u\in B^s_{\infty,1}(\R^N)\cap\cE'(\R^N),$ 
we have $u\in B^{s}_{p,1}(\R^N)$  and  there exists  $C=C(s,p, {\rm Supp}\,u)$ such that 
$$
\|u\|_{B^s_{p,1}}\leq C\|u\|_{B^s_{\infty,1}}.
$$
\end{prop}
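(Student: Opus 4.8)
The plan is to reduce the bound to the continuity of multiplication by a fixed smooth, compactly supported cut-off function, which is a classical fact of Bony's calculus (see e.g.\ \cite{BCD2011}). When $p=\infty$ the statement is trivial, so assume $p<\infty$. Since $u\in\cE'(\R^N)$, fix once and for all $\theta\in\cC^\infty_c(\R^N)$ with $\theta\equiv1$ on the $1$-neighbourhood of $\mathrm{Supp}\,u$ and $\mathrm{Supp}\,\theta$ contained in its $2$-neighbourhood; then $u=\theta u$, and every Sobolev norm $\|\theta\|_{W^{M,p}}$ that will appear is controlled by $M$, $p$, $N$ and $\mathrm{Supp}\,u$ only. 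It therefore suffices to prove the a priori estimate
$$
\|\theta v\|_{B^s_{p,1}}\leq C(s,p,N,\theta)\,\|v\|_{B^s_{\infty,1}}\qquad\hbox{for all }\ v\in B^s_{\infty,1}(\R^N).
$$

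To get this I would use the inhomogeneous Bony decomposition $\theta v=T_\theta v+T_v\theta+R(\theta,v)$ and handle the three pieces separately. For $T_\theta v=\sum_j S_{j-N_0}\theta\,\Delta_j v$ one uses that $S_{j-N_0}\theta$ is bounded in $L^p$ uniformly in $j$ (Young's inequality, the kernel of $S_m$ having $m$-independent $L^1$ norm) and that the $j$-th term is spectrally localised at frequency $2^j$; this gives $\|T_\theta v\|_{B^s_{p,1}}\lesssim\|\theta\|_{L^p}\,\|v\|_{B^s_{\infty,1}}$. For $T_v\theta=\sum_j S_{j-N_0}v\,\Delta_j\theta$ and for $R(\theta,v)$, the point is that $\|S_{j-N_0}v\|_{L^\infty}$ is a \emph{finite} sum of Littlewood--Paley norms of $v$, whereas $\theta$ being smooth and compactly supported gives the fast decay $\|\Delta_j\theta\|_{L^p}\lesssim_M 2^{-jM}\|\theta\|_{W^{M,p}}$ for every $M$ (Bernstein's inequality); choosing $M>|s|+1$ and summing the resulting geometric series (exchanging the two summations by Fubini) yields $\|T_v\theta\|_{B^s_{p,1}}+\|R(\theta,v)\|_{B^s_{p,1}}\lesssim_M(\|\theta\|_{W^{M,p}}+\|\theta\|_{L^p})\,\|v\|_{B^s_{\infty,1}}$, the finitely many low-frequency blocks (in particular $j=-1$) being treated by hand. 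Adding the three estimates proves the multiplication bound, and applying it to $v=u$ gives $\|u\|_{B^s_{p,1}}=\|\theta u\|_{B^s_{p,1}}\lesssim_{s,p,\mathrm{Supp}\,u}\|u\|_{B^s_{\infty,1}}$.

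I do not expect a genuine obstacle here; the delicate point is bookkeeping. One must check that in each of the three estimates the right-hand side is really $\|v\|_{B^s_{\infty,1}}$ itself and not some other, uncontrollable quantity attached to $u$ (such as its order as a distribution, or the behaviour of $\widehat u$ near the origin) --- this is exactly what the Bony decomposition arranges, since $v$ only ever enters through $\|\Delta_j v\|_{L^\infty}$ or through finite sums of such norms, so the worst a single block $\Delta_j u$ can do is already recorded in $\|u\|_{B^s_{\infty,1}}$. The accompanying routine care is to take the smoothing index $M$ large relative to $|s|$ so that all the geometric series converge, which is free since $\theta\in\cC^\infty$.
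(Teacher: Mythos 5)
Your proof is correct and follows essentially the same route as the paper: write $u=\theta u$ for a smooth compactly supported cut-off and apply Bony's decomposition, exploiting that $\theta$ belongs to every Besov space built on $L^p$ while $u$ enters only through its blocks $\|\Delta_j u\|_{L^\infty}$. The only difference is cosmetic: you re-derive by hand, via the rapid decay of $\|\Delta_j\theta\|_{L^p}$, the paraproduct and remainder continuity estimates that the paper's proof simply quotes.
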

\begin{proof}
Let $u$ be in $B^s_{\infty,1}(\R^N)$ with compact support, and fix some 
smooth cut-off function $\phi$ so that $\phi\equiv1$ on ${\rm Supp}\, u.$ 
Of course, being compactly and smooth, $\phi$ belongs to any nonhomogeneous Besov space.
Then, using decomposition \eqref{eq:bony} and the fact that $u=\phi u$, one can write
$$
u=T_\phi u+ T_u\phi+R(u,\phi).
$$
Because $\phi$ is in $L^p$ and $u,$ in $B^s_{\infty,1},$ standard continuity results for the paraproduct
ensure that $T_\phi u$ is in $B^s_{p,1}.$ For the second term, we just use that 
$u$ is in, say, $B^{\min(0,s)}_{\infty,1}$ and $\phi,$ in $B^{-\min(0,s)+s}_{p,1}$ hence 
$T_u\phi$ is in $B^s_{p,1}.$ For the remainder term, we use for instance the fact that $\phi$ is in $B^{|s|+\frac12}_{p,1}.$
Putting all those informations together completes the proof.
\end{proof}

The following result  was the key to bounding the density terms in our study of $(INS).$
\begin{lemma}\label{lemma:multiplier}
Let $(s,s_k,p,p_k,r, r_k) \in \,]-1,1[^2 \times [1,\infty]^4$ with $k=1,2,$  and $Z:\R^N \rightarrow \R^N$ be a $\cC^1$ measure preserving  diffeomorphism such that  $DZ$ and $DZ^{-1}$ are bounded. When we consider the homogeneous Besov space $\hB^s_{p,r}(\R^N)$ or $\hB^{s_k}_{p_k,r_k}(\R^N),$ we assume in addition that $s\in]-\frac N{p'},\frac Np[$ and $s_k\in]-\frac N{p'_k},\frac N{p_k}[$ for 
$k=1,2.$ Then we have: 
\begin{enumerate}
\item If $b^{s}_{p,r}(\R^N)$ stands for $B^{s}_{p,r}(\R^N)$ or $\hB^{s}_{p,r}(\R^N),$ then the mapping $u \mapsto u\circ Z $ is continuous on $b^{s}_{p,r}(\R^N)$: 
  there is a positive constant $C_{Z,s,p,r}$ such that 
\begin{equation}\label{es:composition_besov}
\|u\circ Z \|_{b^{s}_{p,r}} \leq  C_{Z,s,p,r} \|u \|_{b^{s}_{p,r}}.
\end{equation}

\item  If $b^{s_k}_{p_k, r_k}$ with $k=1,2,$ denote the same type of Besov spaces, then the mapping $\varphi \mapsto \varphi \circ Z $ is continuous on $\cM\big(b^{s_1}_{p_1,r_1}(\R^N) \rightarrow b^{s_2}_{p_2,r_2}(\R^N) \big),$ that is 
\begin{equation*}
\|\varphi \circ Z\|_{\cM\big(b^{s_1}_{p_1,r_1}\to b^{s_2}_{p_2,r_2} \big)} \leq C_{Z^{-1},1} C_{Z,2}  \|\varphi\|_{\cM\big(b^{s_1}_{p_1,r_1} \to b^{s_2}_{p_2,r_2} \big)}.
\end{equation*}
\item We have the following equivalence for any $\varphi \in \cE'(\R^N),$   
\begin{equation*}
\varphi \in \cM\big(B^{s_1}_{p_1,r_1}(\R^N) \rightarrow B^{s_2}_{p_2,r_2}(\R^N) \big) 
\Longleftrightarrow   
\varphi \in  \cM\big(b^{s_1}_{p_1,r_1}(\R^N) \rightarrow b^{s_2}_{p_2,r_2}(\R^N) \big) . 
\end{equation*}
Here $b^{s_1}_{p_1,r_1}$ and $b^{s_2}_{p_2,r_2}$ can be different type of Besov spaces but obey our convention on the index $s_k$ for homogeneous Besov space. 
\end{enumerate}
\end{lemma}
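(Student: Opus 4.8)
\textbf{Proof plan for Lemma~\ref{lemma:multiplier}.}
The plan is to prove the three items in order, each one building on its predecessor. For item (i), I would reduce the statement to the classical fact that composition by a bi-Lipschitz, measure-preserving $\cC^1$ diffeomorphism acts boundedly on $\cC^{0,s}=B^s_{\infty,\infty}$ for $s\in]0,1[$ (elementary from the definition of the H\"older seminorm, together with $\|DZ\|_{L^\infty}$ and the measure preservation for the $L^\infty$ and $L^p$ parts) and, more generally, that it acts boundedly on $W^{\pm1,p}$ and hence, by real interpolation, on $B^s_{p,r}$ for the full range $s\in]-1,1[$; here measure preservation is what makes the change of variables an isometry on $L^p$ and lets us transfer the first-order estimate to negative regularity by duality. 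For the \emph{homogeneous} spaces one proceeds identically but must keep track of the extra condition $s\in]-\tfrac N{p'},\tfrac Np[$, which is exactly what guarantees that $\hB^s_{p,r}(\R^N)\hookrightarrow\cS'_h(\R^N)$ is stable under the (non-polynomial-preserving) change of variables; this is the standard argument from \cite[Section~2.1]{DanM2015} or \cite[Chapter~2]{BCD2011}, so I would simply cite it rather than reproduce the Littlewood--Paley bookkeeping.

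For item (ii) I would argue directly from the definition of the multiplier norm: given $\varphi\in\cM(b^{s_1}_{p_1,r_1}\to b^{s_2}_{p_2,r_2})$ and $u\in b^{s_1}_{p_1,r_1}$ with $\|u\|\leq 1$, write $(\varphi\circ Z)\,u=\bigl((\varphi\cdot(u\circ Z^{-1}))\circ Z\bigr)$; by item (i) applied with $Z^{-1}$ the function $u\circ Z^{-1}$ lies in $b^{s_1}_{p_1,r_1}$ with norm $\leq C_{Z^{-1},s_1,p_1,r_1}$, hence $\varphi\cdot(u\circ Z^{-1})\in b^{s_2}_{p_2,r_2}$ with norm $\leq C_{Z^{-1},1}\|\varphi\|_{\cM}$, and applying item (i) once more (this time with $Z$, on the target space) gives $(\varphi\circ Z)u\in b^{s_2}_{p_2,r_2}$ with norm $\leq C_{Z,2}C_{Z^{-1},1}\|\varphi\|_{\cM}$. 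Taking the supremum over such $u$ yields the claimed bound. The only subtlety is to make sure item (i) is legitimately applicable on both source and target, which it is precisely because the index restrictions $s_k\in]-\tfrac N{p'_k},\tfrac N{p_k}[$ are assumed for both $k=1,2$.

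For item (iii), the ``$\Leftarrow$'' direction is immediate once one recalls from Proposition~\ref{prop:equiv_besov} that, for compactly supported distributions, $B^{s_k}_{p_k,r_k}$ and $\hB^{s_k}_{p_k,r_k}$ coincide with equivalent norms (under the stated index conditions); since $\varphi\in\cE'$ and multiplication by $\varphi$ does not enlarge supports of test functions in a troublesome way, any bound $\|\varphi u\|_{b^{s_2}_{p_2,r_2}}\lesssim\|u\|_{b^{s_1}_{p_1,r_1}}$ transfers to the other scale of Besov spaces after localizing $u$. Concretely, I would fix a cut-off $\phi\in\cC_c^\infty$ equal to $1$ on a neighbourhood of ${\rm Supp}\,\varphi$, write $\varphi u=\varphi(\phi u)$, note that $\phi u$ is compactly supported so Proposition~\ref{prop:equiv_besov} applies to it, and that $\varphi u$ is compactly supported so Proposition~\ref{prop:equiv_besov} applies to it as well; running this through the definition of the multiplier norm gives the equivalence. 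One has to check that multiplication by the smooth compactly supported $\phi$ is bounded on each $b^{s}_{p,r}$ in play, which follows from Bony's decomposition \eqref{eq:bony} and standard continuity of paraproduct and remainder, exactly as in the proof of Proposition~\ref{prop:besov-holder}.

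The main obstacle is item (i) in full generality, i.e.\ getting composition estimates on $b^s_{p,r}$ for the \emph{whole} range $s\in]-1,1[$ and for \emph{homogeneous} spaces simultaneously: the positive-regularity range and the $L^p$ case are routine, but handling $s\in]-1,0[$ requires a duality argument that is clean only because $Z$ is measure preserving (so that composition is an $L^p$-isometry and its transpose is composition by $Z^{-1}$), and handling the homogeneous spaces requires the $\cS'_h$-stability that forces the index window $]-\tfrac N{p'},\tfrac Np[$. Once item (i) is in place with the stated constants $C_{Z,s,p,r}$, items (ii) and (iii) are short formal manipulations. I would therefore spend the bulk of the write-up on (i), invoking \cite{DanM2015,BCD2011} for the technical core, and dispatch (ii)--(iii) in a few lines each.
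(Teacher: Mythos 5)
Your proposal is correct and follows essentially the same route as the paper: item (i) is in both cases reduced to the known composition estimates (citing \cite{DanM2015}, with the nonhomogeneous range handled by a direct positive-regularity estimate, duality for negative $s$, and interpolation — your packaging via $W^{\pm1,p}$ and real interpolation is only a cosmetic variant of the paper's finite-difference/duality/interpolation-at-$s=0$ scheme), while your arguments for (ii) (conjugating $u\mapsto\varphi u$ by composition with $Z^{\pm1}$ and applying (i) twice) and for (iii) (cut-off $\psi\equiv1$ near ${\rm Supp}\,\varphi$, Proposition \ref{prop:equiv_besov} on the compactly supported functions $\psi u$ and $\varphi\psi u$, and $\cC^\infty_c\hookrightarrow\cM(b^{s_1}_{p_1,r_1})$) are exactly the paper's.
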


\begin{proof}
Item    (i) in the  case $b= \hB$ has been proved in   \cite[Lemma 2.1.1]{DanM2015}. 
 One may easily modify the proof  to handle  nonhomogeneous Besov spaces: 
use the finite difference characterization of \cite[Page 98]{Tri1992} if $s>0,$ argue 
by duality if $s<0$ and interpolate for the case $s=0$.  We get
  $C_{Z,s,p,r}\approx 1+ \|DZ\|^{s+\frac{N}{r}}_{L^{\infty}}$ if $s>0,$ and   $C_{Z,s,p,r}\approx 1+ \|DZ^{-1}\|^{-s+\frac{N}{r'}}_{L^{\infty}}$ if  $s<0.$  
\medbreak
Part (ii) is  immediate according to \eqref{eq:def_norm_multiplier} and \eqref{es:composition_besov}. Indeed we may write:
$$
\begin{aligned}
\|\varphi \circ Z\|_{\cM\big(b^{s_1}_{p_1,r_1}\to b^{s_2}_{p_2,r_2} \big)} 
&=\sup_{\|u\|_{b^{s_1}_{p_1,r_1}}\leq 1}
\|(\varphi\circ Z) \, u\|_{b^{s_2}_{p_2,r_2}}\\
&=\sup_{\|u\|_{b^{s_1}_{p_1,r_1}}\leq 1}
\|(\varphi\,(u\circ Z^{-1}))\circ Z\|_{b^{s_2}_{p_2,r_2}}\\
&\leq C_{Z,2} \sup_{\|u\|_{b^{s_1}_{p_1,r_1}}\leq 1}
\|\varphi\,(u\circ Z^{-1})\|_{b^{s_2}_{p_2,r_2}}\\
&\leq C_{Z,2} \|\varphi\|_{\cM(b^{s_1}_{p_1,r_1}\to b^{s_2}_{p_2,r_2})}
\sup_{\|u\|_{b^{s_1}_{p_1,r_1}}\leq 1}\|u\circ Z^{-1}\|_{b^{s_1}_{p_1,r_2}}\\
&\leq C_{Z^{-1},1} C_{Z,2} 
 \|\varphi\|_{\cM(b^{s_1}_{p_1,r_1}\to b^{s_2}_{p_2,r_2})}.\end{aligned}
$$

To prove the last item, it suffices to check that  if $\varphi$ belongs to $\cE'\cap \cM\big(B^{s_1}_{p_1,r_1} \rightarrow B^{s_2}_{p_2,r_2} \big),$ then $\varphi$ is also in the multiplier space between the general type Besov spaces. Take  $u \in b^{s_1}_{p_1,r_1}$ with compact support, and  some smooth and compactly supported nonnegative
cut-off function  $\psi$ satisfying $\psi \equiv 1$ on ${\rm Supp}\,\varphi.$  Then from Proposition \ref{prop:equiv_besov} and \eqref{eq:def_norm_multiplier}, we have
\begin{align*}
\|\varphi u\|_{b^{s_2}_{p_2,r_2}} &=\|\varphi \psi u\|_{b^{s_2}_{p_2,r_2}} \lesssim \|\varphi \psi u\|_{B^{s_2}_{p_2,r_2}} \\
&\lesssim \|\varphi\|_{\cM\big(B^{s_1}_{p_1,r_1} \rightarrow B^{s_2}_{p_2,r_2} \big)} \| \psi u\|_{B^{s_1}_{p_1,r_1}}\\
&\lesssim \|\varphi\|_{\cM\big(B^{s_1}_{p_1,r_1} \rightarrow B^{s_2}_{p_2,r_2} \big)} \|\psi u\|_{b^{s_1}_{p_1,r_1}}\\
&\lesssim \|\varphi\|_{\cM\big(B^{s_1}_{p_1,r_1} \rightarrow B^{s_2}_{p_2,r_2} \big)} \|\psi\|_{\cM\big(b^{s_1}_{p_1,r_1} \big)}\| u\|_{b^{s_1}_{p_1,r_1}}.
\end{align*}
For the last inequality, we used  $\cC_c^\infty \hookrightarrow \cM\big(b^{s_1}_{p_1,r_1}\big)$ (see \cite[Corollary 2.1.1]{DanM2015}). 
\end{proof}

\section{Commutator Estimates}

We here recall and  prove some  commutator estimates that were crucial in this paper. All of them strongly rely on continuity results in Besov spaces for the paraproduct and remainder operators, and on  the following classical result (see e.g.  \cite[Section 2.10]{BCD2011}).
\begin{lemma}\label{lemma:ce1} 
Let $A:\R^N\setminus\{0\}\to\R$ be a smooth function, homogeneous of degree $m.$ Let $(\ep,s,p,r,r_1,r_2,p_1,p_2) \in ]0,1[ \times \R \times [1, \infty]^6$ with $\frac{1}{p}=\frac{1}{p_1}+\frac{1}{p_2},$
$\frac1r=\frac1{r_1}+\frac1{r_2}$  and 
$$s-m+\ep <\frac{N}{p} \quad \mbox{or} \quad \Big\{s-m+\ep <\frac{N}{p} ~~\mbox{and}~~ r=1\Big\}\cdotp$$

 There exists a constant $C$ depending only on $s, \ep, N$ and $A$ such that,
$$\|[\hT_g, A(D)]u\|_{\hB^{s-m+\ep}_{p,r}} \leq C \|\nabla g\|_{\hB^{\ep-1}_{p_1,r_1}}\|u\|_{\hB^{s}_{p_2,r_2}}.$$
\end{lemma}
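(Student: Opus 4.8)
The plan is to follow the classical paradifferential route for commutators between a paraproduct and a Fourier multiplier, working block by block in the Littlewood--Paley decomposition. First I would write $[\hT_g,A(D)]u=\sum_{j\in\Z}R_j$ with $R_j:=[\hS_{j-N_0}g,A(D)]\hDelta_ju,$ using that $A(D)$ commutes with $\hDelta_j.$ Since $\hS_{j-N_0}g$ is spectrally supported in a ball of radius $\lesssim 2^{j-N_0},$ whereas $A(D)\hDelta_ju$ and $\hS_{j-N_0}g\,\hDelta_ju$ are spectrally supported in an annulus $\{c2^j\le|\xi|\le C2^j\},$ choosing $N_0$ large enough makes each $R_j$ spectrally localized in an annulus of size $2^j.$ By the standard characterization of $\hB^{s-m+\ep}_{p,r}$ through spectrally localized series (see \cite[Chapter 2]{BCD2011}), it then suffices to control $\big\|\bigl(2^{j(s-m+\ep)}\|R_j\|_{L^p}\bigr)_{j\in\Z}\big\|_{\ell^r};$ the hypothesis $s-m+\ep\le N/p$ (strict, or with $r=1$ at equality) is exactly what guarantees, via Bernstein's inequality on the low frequencies, that $\sum_jR_j$ converges in $\cS'_h(\R^N)$ so that this quantity really is an upper bound for the left-hand side.

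The core is the single-block bound
\[
\|R_j\|_{L^p}\lesssim 2^{j(m-1)}\,\|\nabla\hS_{j-N_0}g\|_{L^{p_1}}\,\|\hDelta_ju\|_{L^{p_2}}.
\]
To get it I would fix a bump $\tilde\varphi\in\cC_c^\infty(\R^N\setminus\{0\})$ equal to $1$ on the support of $\varphi$ so that $\hDelta_ju=\tilde\varphi(2^{-j}D)\hDelta_ju;$ homogeneity of $A$ gives $A(D)\hDelta_ju=2^{jm}\Theta(2^{-j}D)\hDelta_ju$ with $\Theta:=A\tilde\varphi\in\cC_c^\infty(\R^N\setminus\{0\}),$ hence $h:=\cF^{-1}\Theta\in\cS(\R^N).$ Then $R_j$ has the integral kernel $2^{j(m+N)}h\bigl(2^j(x-y)\bigr)\bigl(\hS_{j-N_0}g(y)-\hS_{j-N_0}g(x)\bigr),$ and a first order Taylor expansion of $\hS_{j-N_0}g$ along the segment $[x,y]$ pulls out a factor $2^{j(m-1)}$ together with a kernel $2^{jN}\tilde h(2^j\cdot),$ where $\tilde h(w):=wh(w)\in\cS(\R^N)$ has $L^1$ norm independent of $j;$ Minkowski's inequality in $L^p$ and H\"older's inequality with $1/p=1/p_1+1/p_2$ then conclude this step.

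It then remains to handle the low-frequency factor and sum up. Writing $\nabla\hS_{j-N_0}g=\sum_{k\le j-N_0-1}\hDelta_k\nabla g$ and using $\ep<1,$ one has $\|\nabla\hS_{j-N_0}g\|_{L^{p_1}}\le 2^{j(1-\ep)}e_j,$ where $(e_j)_{j\in\Z}$ is the discrete convolution of $\bigl(2^{k(\ep-1)}\|\hDelta_k\nabla g\|_{L^{p_1}}\bigr)_k\in\ell^{r_1}$ with a fixed $\ell^1$ geometric sequence, so that $\|(e_j)\|_{\ell^{r_1}}\lesssim\|\nabla g\|_{\hB^{\ep-1}_{p_1,r_1}}$ by Young's inequality. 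Inserting this together with $\|\hDelta_ju\|_{L^{p_2}}=2^{-js}f_j$ (with $\|(f_j)\|_{\ell^{r_2}}=\|u\|_{\hB^s_{p_2,r_2}}$) into the block bound, the powers of $2$ combine to give $2^{j(s-m+\ep)}\|R_j\|_{L^p}\lesssim e_jf_j,$ and H\"older for sequences with $1/r=1/r_1+1/r_2$ finishes the proof.

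There is no deep obstacle here: the statement is classical paradifferential calculus and the whole argument is bookkeeping with Littlewood--Paley pieces. The one point deserving care is the single-block kernel estimate, where one must exploit the spectral localization of $\hDelta_ju$ to replace the merely homogeneous symbol $A$ by the fixed Schwartz kernel $h$ \emph{before} performing the Taylor expansion; and at the technical level one should not skip checking the $\cS'_h$-convergence of $\sum_jR_j,$ which is precisely the role of the restriction on $s-m+\ep.$
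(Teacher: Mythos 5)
Your proof is correct, but note that the paper does not actually prove this lemma: it is invoked as a classical result with a pointer to \cite[Section 2.10]{BCD2011}, and what you have written is essentially the standard textbook argument for that cited statement. Your steps — splitting the commutator into blocks $[\hS_{j-N_0}g,A(D)]\hDelta_j u$, using the spectral localization of $\hDelta_ju$ to replace the homogeneous symbol $A$ by a fixed rescaled Schwartz kernel $2^{jm}2^{jN}h(2^j\cdot)$, gaining the factor $2^{j(m-1)}$ through a first-order Taylor expansion of $\hS_{j-N_0}g$, and then combining the low-frequency bound $\|\nabla\hS_{j-N_0}g\|_{L^{p_1}}\lesssim 2^{j(1-\ep)}e_j$ (which is where $\ep<1$ enters) with H\"older in $\ell^r$ and the condition on $s-m+\ep$ to secure convergence in $\cS'_h$ — reproduce the proof one finds in the reference, so there is no genuinely different route to compare. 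Incidentally, you read the hypothesis correctly: the paper's displayed condition repeats the strict inequality twice and is evidently a typo for ``$s-m+\ep<\frac Np$, or $s-m+\ep=\frac Np$ and $r=1$''. One cosmetic point: since $A(D)$ also acts on the product $\hS_{j-N_0}g\,\hDelta_ju$, whose spectrum lies in a slightly fattened annulus (by $O(2^{-N_0})$) around ${\rm supp}\,\varphi$, the cut-off $\tilde\varphi$ must be taken equal to $1$ on that fattened annulus, not merely on ${\rm supp}\,\varphi$, for your kernel representation of the full commutator $R_j$ to be exact; with $N_0$ large this is immediate and changes nothing in the estimates.
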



If the integer $N_0$ in the definition of Bony's paraproduct and remainder  is large enough (for instance $N_0 =4$ does), then the following fundamental lemma holds.
\begin{lemma}[Chemin-Leibniz Formula]\label{lemma:chemin_leibniz}
Let $(\ep,s,s_k,p, p_k, r,r_k) \in ]0,1[ \times \R^2 \times [1, \infty]^4$ for $k=1,2$ satisfying
\begin{equation*}
\frac{1}{p} = \frac{1}{p_1} + \frac{1}{p_2}\ \hbox{ and }\ \frac{1}{r} = \frac{1}{r_1} + \frac{1}{r_2}\cdotp
\end{equation*} 

\begin{enumerate}
\item If $s_2  < 0$ and $s_1+s_2+\ep-1 < \frac{N}{p}$ or $\{s_1+s_2+\ep-1=\frac{N}{p} ~~\mbox{and}~~r=1\},$ then we have 
\begin{equation*}
\|\hcT_X \hT_g f - \hT_g \hcT_X f - \hT_{ \hcT_X g}f \|_{\hB^{s_1+s_2+\ep-1}_{p,r}} \leq C \|X\|_{\hsC^{\ep}} \|f\|_{\hB^{s_1}_{p,r_1}}\|g\|_{\hB^{s_2}_{\infty,r_2}}.
\end{equation*}
The above inequality still  holds in the limit case $s_2=0,$ if one  replaces $\|g\|_{\hB^{0}_{\infty,r_2}}$ by $\|g\|_{\hB^{0}_{\infty,r_2}\cap L^\infty}$.
\item If $s_1+s_2+\ep-1 \in ]0, \frac{N}{p}[$ or $\{s_1+s_2+\ep-1=\frac{N}{p} ~~\mbox{and}~~r=1\},$ then  we have 
\begin{equation*}
\|\hcT_X \hR(f,g)-\hR(\hcT_X f, g)-\hR(f, \hcT_X g)\|_{\hB^{s_1+s_2+\ep-1}_{p,r}} \leq C \|X\|_{\hsC^\ep} \|f\|_{\hB^{s_1}_{p_1,r_1}}\|g\|_{\hB^{s_2}_{p_2,r_2}}.
\end{equation*} 
The above inequality still holds in the limit case $s_1+s_2+\ep -1 =0,$ $r=\infty$ and  $\frac{1}{r_1} + \frac{1}{r_2}=1.$
\end{enumerate}
\end{lemma}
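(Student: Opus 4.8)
The plan is to reduce both estimates, via an exact Leibniz rule for the paraproduct, to estimates for paraproducts alone: a commutator between two paraproduct operators, and the \emph{associativity defect} $\dot T_a(\hT_b f)-\hT_{\dot T_a b}f$. Each of these is then handled by the usual Littlewood--Paley bookkeeping, the gain of $\ep$ derivatives coming systematically from $\|\hDelta_j X\|_{L^\infty}\lesssim 2^{-j\ep}\|X\|_{\hsC^\ep}$ and from the fact that, since $N_0\ge4$, every elementary block of a paraproduct is spectrally supported in a dyadic annulus of size $2^j$ and every block of a remainder in a ball of radius $\sim2^j$. For Part (i), since $\d_k$ is a Fourier multiplier the identity $\d_k\hT_g f=\hT_{\d_k g}f+\hT_g\d_k f$ holds exactly; applying $\dot T_{X^k}$ and subtracting $\hT_g\hcT_X f=\hT_g\dot T_{X^k}\d_k f$ and $\hT_{\hcT_X g}f=\hT_{\dot T_{X^k}\d_k g}f$ gives
$$\hcT_X\hT_g f-\hT_g\hcT_X f-\hT_{\hcT_X g}f=\big[\dot T_{X^k},\hT_g\big]\d_k f+\Big(\dot T_{X^k}\hT_{\d_k g}f-\hT_{\dot T_{X^k}\d_k g}f\Big)=:\mathrm I+\mathrm{II},$$
so it remains to bound $\mathrm I$ and $\mathrm{II}$ in $\hB^{s_1+s_2+\ep-1}_{p,r}$.

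First I would treat $\mathrm I$. Writing $\hT_g h=\sum_j\hS_{j-N_0}g\,\hDelta_j h$ with $h=\d_k f$, each summand is spectrally localized near $2^j$, so $\dot T_{X^k}$ acts on it essentially as multiplication by $\hS_{j-N_0}X^k$, up to finite-range differences $(\hS_{j'-N_0}-\hS_{j-N_0})X^k$ with $|j'-j|\lesssim1$, whose $L^\infty$ norm is $\lesssim2^{-j\ep}\|X\|_{\hsC^\ep}$; comparing with the analogous block expansion of $\hT_g\dot T_{X^k}h$ one sees that the diagonal contributions cancel and only these telescoping differences survive. Feeding in Bernstein's inequalities and summing in $\ell^r(\Z)$ yields $\|\mathrm I\|_{\hB^{s_1+s_2+\ep-1}_{p,r}}\lesssim\|X\|_{\hsC^\ep}\|\d_k f\|_{\hB^{s_1-1}_{p,r_1}}\|g\|_{\hB^{s_2}_{\infty,r_2}}$, which is the wanted bound since $\|\d_k f\|_{\hB^{s_1-1}_{p,r_1}}\lesssim\|f\|_{\hB^{s_1}_{p,r_1}}$. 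Then $\mathrm{II}$, which measures the failure of associativity of the paraproduct with $a=X^k$, $b=\d_k g$, is treated by the same mechanism, now comparing $\hS_{j-N_0}X^k\,\hS_{j-N_0}b$ with $\hS_{j-N_0}(\dot T_{X^k}b)$, which again differ only by telescoping terms carrying the factor $2^{-j\ep}\|X\|_{\hsC^\ep}$; this gives the estimate with $\|\d_k g\|_{\hB^{s_2-1}_{\infty,r_2}}\lesssim\|g\|_{\hB^{s_2}_{\infty,r_2}}$. The hypothesis $s_2<0$ together with $s_1+s_2+\ep-1<\frac Np$ (or $=\frac Np$ with $r=1$) is exactly what makes the two $j$-sums converge in $\ell^r$; the limit case $s_2=0$ is the standard one where $\hS_{j-N_0}g$ must be controlled in $L^\infty$ uniformly in $j$, whence the additional $\|g\|_{L^\infty}$.

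For Part (ii) I would argue analogously with the remainder $\hR(f,g)=\sum_j\hDelta_j f\,\thDelta_j g$ in place of $\hT_g$. Using $\d_k(\hDelta_j f\,\thDelta_j g)=\hDelta_j\d_k f\,\thDelta_j g+\hDelta_j f\,\thDelta_j\d_k g$ and that each product $\hDelta_j f\,\thDelta_j g$ has spectrum in a ball of radius $\sim2^j$, one reduces --- up to harmless commutators $[\hDelta_j,\dot T_{X^k}]$, which are of the same type and are absorbed --- to estimating, for each $j$, the localized commutator of $\dot T_{X^k}$ with multiplication by $\thDelta_j g$ acting on $\hDelta_j\d_k f$ (and the symmetric term). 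Since on such a ball $\dot T_{X^k}$ only sees $\hS_{j+O(1)}X^k$, this commutator gains a factor $2^{-j\ep}\|X\|_{\hsC^\ep}$; summing over $j$ under $s_1+s_2+\ep-1\in\,]0,\frac Np[$ (or the boundary variant, resp.\ the endpoint $s_1+s_2+\ep-1=0$, $r=\infty$, $\frac1{r_1}+\frac1{r_2}=1$) gives the stated inequality. Throughout, the elementary continuity properties of $\hT$ and $\hR$ from \cite{BCD2011} serve to pass from block bounds to Besov norms.

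The hard part is not any single step but the Littlewood--Paley accounting of Part (i): one must match \emph{exactly} the leading diagonal contributions of the operators $\dot T_{X^k}\hT_g$, $\hT_g\dot T_{X^k}$, $\dot T_{X^k}\hT_b$ and $\hT_{\dot T_{X^k}b}$ so that they cancel, then verify that every surviving off-diagonal or telescoping term truly carries the gain $2^{-j\ep}\|X\|_{\hsC^\ep}$ and is summable in $\ell^r(\Z)$ under the stated index conditions; the three endpoint cases then require only minor, routine adjustments.
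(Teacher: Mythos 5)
Your algebraic starting identity for Part (i) is exact, and your treatment of the commutator $[\hT_{X^k},\hT_g]\d_k f$ is sound: there the inner functions $\hS_{j-N_0}g\,\hDelta_j\d_k f$ have \emph{annulus} spectra of size $2^j$, so $\hT_{X^k}$ really does reduce to multiplication by $\hS_{j-N_0}X^k$ up to finitely many nearby blocks, each carrying $2^{-j\ep}\|X\|_{\hsC^\ep}$. But already for your associativity defect $\hT_{X^k}\hT_{\d_k g}f-\hT_{\hT_{X^k}\d_k g}f$ the mechanism you invoke is not the right one: $\hS_{j-N_0}(\hT_{X^k}\d_k g)-\hS_{j-N_0}X^k\,\hS_{j-N_0}\d_k g$ is \emph{not} a finite-range telescoping near $j$; it is a sum over all $j'\lesssim j$ of $(\hS_{j'-N_0}-\hS_{j-N_0})X^k\,\hDelta_{j'}\d_k g$, each piece gaining only $2^{-j'\ep}$, and the sum over $j'$ converges only because $\|\hDelta_{j'}\d_k g\|_{L^\infty}\lesssim 2^{j'(1-s_2)}$ and $s_2<1-\ep$. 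Your final numbers survive there, but the stated reason is wrong, and this is exactly what breaks down in Part (ii).

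In Part (ii) the product $\hDelta_j f\,\thDelta_j g$ has spectrum in a \emph{ball} of radius $\sim 2^j$, and the claim that ``on such a ball $\hT_{X^k}$ only sees $\hS_{j+O(1)}X^k$'' is false. For $w_j:=\hDelta_j\d_k f\,\thDelta_j g$ one has $\hT_{X^k}w_j-\hS_{j-N_0}X^k\,w_j=\sum_{j'\le j+C}(\hS_{j'-N_0}-\hS_{j-N_0})X^k\,\hDelta_{j'}w_j$, and since $\|\hDelta_{j'}w_j\|_{L^p}$ has no decay as $j'\to-\infty$ (you control $w_j$ only in $L^p$), the natural bound produces the divergent series $\sum_{j'\le j}2^{-j'\ep}$. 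Hence the ``localized commutator of $\hT_{X^k}$ with multiplication by $\thDelta_j g$'' that your plan isolates cannot be estimated as claimed, nor can the symmetric term. The divergence cancels only when the two halves of your Leibniz splitting are recombined, i.e.\ when the derivative is kept on the low-frequency \emph{output} of the product: $\hDelta_{j'}\d_k(\hDelta_j f\,\thDelta_j g)=2^{j'}\hDelta_{k,j'}(\hDelta_j f\,\thDelta_j g)$ with $\hDelta_{k,j'}:=\varphi_k(2^{-j'}D)$, $\varphi_k(\xi)=i\xi_k\varphi(\xi)$, and the factor $2^{j'}$ beats $2^{-j''\ep}$ (blocks $\hDelta_{j''}X^k$ with $j''\geq j'-N_0$) precisely because $\ep<1$, giving $\sum_{j'\le j}2^{j'(1-\ep)}\lesssim 2^{j(1-\ep)}$. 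This is Chemin's precised Leibniz formula, and it is the route the paper takes: it never distributes $\d_k$ onto the two factors, but expands $\hcT_X$ on each block and isolates error terms of the form $2^{j'}\hDelta_{j''}X^k\,\hDelta_{k,j'}(\hDelta_j f\,\thDelta_j g)$ with $j'-N_0\le j''\le j-N_0-2$ (the terms $\hR_{1,j},\hR_{2,j}$, and their analogues $\hT_{1,j},\hT_{2,j}$ in Part (i)), plus genuine commutators $[\hT_{X^k},\hDelta_j]$, $[\hT_{X^k},\hS_{j-N_0}]$, $[\hT_{X^k},\thDelta_j]$ handled by the standard lemmas of \cite{BCD2011}. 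As written, your proposal is missing this recombination, so the proof of Part (ii) does not close.
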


\begin{proof} This is a mere adaptation of \cite{DanZhx2016} to the homogeneous framework.
The proof is based on a  generalized Leibniz formula for para-vector field operators which was derived by J.-Y. Chemin in \cite{Che1988}.  More precisely, define the following Fourier multipliers
\begin{equation*}
\hDelta_{k,j} := \varphi_{k}(2^{-j}D)  \quad\ \,\hbox{ with }\, \varphi_{k}(\xi):= i \xi_k \varphi(\xi) \,\, 
 \hbox{ for }\  k \in\{1,\cdots,N\}\ \hbox{ and } \,\, j \in  \Z. 
\end{equation*}
Then we have
\begin{align*}
 \hcT_X \hT_g f & = \sum_{j\in\Z} (\hS_{j-N_0} g \hcT_{X}\hDelta_{j} f + \hDelta_j f \hcT_X \hS_{j-N_0} g) + \sum_{j\in\Z}(\hT_{1,j} + \hT_{2,j})\\
 &= \hT_g \hcT_X f +\hT_{ \hcT_X g}f + \sum_{\genfrac{}{}{0pt}{}{j\in\Z}{\alpha=1,\ldots,4}} \hT_{\alpha, j},
\end{align*}
where 
\begin{align*}
\hT_{1,j} &:=  \sum_{\genfrac{}{}{0pt}{}{j \leq j' \leq j+1}{j-N_0-1 \leq j''\leq j'-N_0-1}} 2^{j'}  \hDelta_{j''}X^k \big( \hDelta_{k,j'} (\hDelta_j f \hS_{j-N_0} g)-\hDelta_{k,j'}\hDelta_j f \hS_{j-N_0}g \big),\\
\hT_{2,j} &:= \sum_{\genfrac{}{}{0pt}{}{j'\leq j- 2}{j'-N_0 \leq j''\leq j-N_0 -2}} 2^{j'}  \hDelta_{j''}X^k   (\hDelta_j f)  \hDelta_{k,j'} \hS_{j-N_0}  g,\\
\hT_{3,j} & := \hS_{j-N_0} g [\hT_{X^k}, \hDelta_j] \d_k f,\\
\hT_{4,j} & := \hDelta_j f [\hT_{X^k}, \hS_{j-N_0}] \d_k g.
\end{align*}
Bounding $\hT_{1,j}$ and $\hT_{2,j}$  stems from  the definition 
of Besov norms, and Lemmas 2.99,  2.100  of  \cite{BCD2011} allow to bound $\hT_{3,j}$ and $\hT_{4,j}$
provided  $\ep<1$. 
\medbreak
In order to prove the second item, let us set $$A_{j,j'} := \big\{j-N_0-1,\cdots,
 j'-N_0-1\big\} \cup \big\{j'-N_0,\cdots, j-N_0-2\big\}\cdotp$$
We have
\begin{align*}
 \hcT_X \hR(f, g) & = \sum_{j\in\Z} (\thDelta_{j}g \hcT_{X}\hDelta_{j} f + \hDelta_j f \hcT_X \thDelta_{j} g) + \sum_{j\in\Z}(\hR_{1,j} + \hR_{2,j})\\
 &= \hR(\hcT_X f, g)+\hR(f, \hcT_X g) + \sum_{\genfrac{}{}{0pt}{}{j\in\Z}{\alpha=1,\ldots,4}} \hR_{\alpha, j},
\end{align*}
where, denoting $\thDelta_j:=\hDelta_{j-N_0}+\cdots+\hDelta_{j+N_0},$ 
\begin{align*}
\hR_{1,j} &:=  \sum_{\genfrac{}{}{0pt}{}{|j'-j| \leq N_0 + 1}{j''\in A_{j,j'}}} {\rm sgn}(j'-j+1) 2^{j'}  \hDelta_{j''}X^k \big( \hDelta_{k,j'} (\hDelta_j f \thDelta_{j} g) - \hDelta_j f \hDelta_{k,j'} \thDelta_{j}g \big)\\
& \hspace{7cm}+ \sum_{\genfrac{}{}{0pt}{}{j-1 \leq j' \leq j}{j'-N_0 \leq j'' \leq j-N_0}} 2^{j'} \hDelta_{j''}X^k(\hDelta_{k,j'} \hDelta_j f) \thDelta_{j} g,\\
\hR_{2,j} &:= \sum_{\genfrac{}{}{0pt}{}{j' \leq j-N_0-2}{j'-N_0 \leq j''\leq j-N_0-2}} 2^{j'}  \hDelta_{j''}X^k   \hDelta_{k,j'} (\hDelta_j f \thDelta_{j}  g)   ,\\
\hR_{3,j} & := \thDelta_j g [\hT_{X^k}, \hDelta_j] \d_k f,\\
\hR_{4,j} & := \hDelta_j f [\hT_{X^k}, \thDelta_j] \d_k g.
\end{align*}
Here again, bounding $\hR_{1,j}$ and $\hR_{2,j}$ follows from the definition of
Besov norms, while Lemma 2.100 of  \cite{BCD2011} allows to bound $\hR_{3,j}$ and $\hR_{4,j}.$
\end{proof}
 
\begin{prop}\label{prop:ce_dINS}
Let $(\ep,p)$ be in $]0,1[\times [1,\infty].$ 
Consider a couple of vector fields $(X,v)$  in 
the space  $$\bigl(L^{\infty}_{loc}(\R_+; \hsC^{\ep}) \bigr)^N \times \big(L^{\infty}_{loc}(\R_+ ; \hB^{\frac{N}{p}-1}_{p,1})\cap L^{1}_{loc}(\R_+ ;\hB^{\frac{N}{p}+1}_{p,1})\big)^N,$$
 satisfying   $\div v=0$  and the transport equation 
 \begin{equation}\label{eq:prop_ce_dINS}
\left\{
\begin{array}{l}
(\d_t+v\cdot \nabla)X = \d_X v,\\[1ex]
X|_{t=0} = X_0.
\end{array}
\right.
\end{equation}
 If in addition 
 \begin{equation}\label{cdt:p_Xbis}
 \frac Np > 2-\ep,  \quad\hbox{or }\   \frac Np >1-\ep \: \hbox{ and }\: \div X\equiv0.
 \end{equation}  
then  there exists a constant $C$ such that:
\begin{multline}\label{es:TX}
 \|[\hcT_X, \d_t+v\cdot \nabla ]v\|_{\hB^{\frac{N}{p}+\ep-2}_{p,1}} \leq C(\|X\|_{\hsC^\ep} \|v\|_{\hB^{\frac{N}{p}+1}_{p,1}} \|v\|_{\hB^{\frac{N}{p}-1}_{p,1}} \\+\|v\|_{\hsC^{-1}}\|\hcT_{X}v\|_{\hB^{\frac{N}{p}+\ep}_{p,1}}+\|v\|_{\hB^{\frac{N}{p}+1}_{p,1}}\|\hcT_{X}v\|_{\hsC^{\ep-2}}).
\end{multline}
 \end{prop}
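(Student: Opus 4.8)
The plan is to expand the commutator $[\hcT_X,\d_t+v\cdot\nabla]v$ into finitely many terms, each of which is a paraproduct or a remainder built out of $v$ (at various regularity levels) and of $X$ or its first derivatives, and then to control every piece by means of the continuity properties of $\hT$ and $\hR$, the Chemin--Leibniz commutation formula (Lemma~\ref{lemma:chemin_leibniz}), and the comparison bound \eqref{es:dXf_TXf} between $\hcT_X$ and $\d_X$.

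First I would deal with the time derivative. Since $\hcT_X=\hT_{X^k}\d_k$ is made of \emph{spatial} Fourier multipliers, differentiating $\hT_{X^k}\hDelta_j\d_k v$ in $t$ gives $[\hcT_X,\d_t]v=-\hcT_{\d_tX}v$, and the paraproduct estimate (valid because $\ep-2<0$) yields $\|[\hcT_X,\d_t]v\|_{\hB^{\frac Np+\ep-2}_{p,1}}\lesssim\|\d_tX\|_{\hsC^{\ep-2}}\,\|v\|_{\hB^{\frac Np+1}_{p,1}}$. It thus remains to bound $\d_tX$ in $\hsC^{\ep-2}$. From \eqref{eq:prop_ce_dINS} we have $\d_tX=\d_Xv-v\cdot\nabla X$; I would write $\d_Xv=\hcT_Xv+(\d_X-\hcT_X)v$ and use \eqref{es:dXf_TXf} (licit thanks to \eqref{cdt:p_Xbis}) together with the embedding $\hB^{\frac Np+\ep-2}_{p,1}\hookrightarrow\hsC^{\ep-2}$, while expanding $v\cdot\nabla X$ with Bony's decomposition \eqref{eq:bony} and using $\div v=0$ to recast the remainder as $\hR(v^k,\d_kX)=\d_k\hR(v^k,X)$. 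This produces $\|\d_tX\|_{\hsC^{\ep-2}}\lesssim\|\hcT_Xv\|_{\hsC^{\ep-2}}+(\|v\|_{\hsC^{-1}}+\|v\|_{\hB^{\frac Np-1}_{p,1}})\|X\|_{\hsC^{\ep}}$, every resulting product being among those on the right-hand side of \eqref{es:TX}.

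Next I would handle the convective part, $[\hcT_X,v\cdot\nabla]v=\hcT_X(v\cdot\nabla v)-v\cdot\nabla(\hcT_Xv)$. Applying $v^j\d_jw=\hT_{v^j}\d_jw+\hT_{\d_jw}v^j+\d_j\hR(v^j,w)$ (again using $\div v=0$) to $w=v$ and to $w=\hcT_Xv$ and subtracting, the commutator splits into three groups. In each, the principal ``diagonal'' term has the shape $\hcT_X\hT_gf-\hT_g\hcT_Xf-\hT_{\hcT_Xg}f$ or $\hcT_X\hR(f,g)-\hR(\hcT_Xf,g)-\hR(f,\hcT_Xg)$ with $f,g$ taken among $v,\d_jv,v^j$, and is absorbed via Lemma~\ref{lemma:chemin_leibniz}, which converts it into a true remainder bounded by $\|X\|_{\hsC^\ep}$ times products of Besov norms of $v$ (the hypotheses on the indices hold since $\ep<1$ and the regularity sums involved stay below $\frac Np$). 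The leftover pieces are off-diagonal paraproducts whose argument is $\hcT_Xv$ or $\hcT_{\d_jX}v$ --- for instance $\hT_{\hcT_Xv^j}\d_jv$, estimated through $\|\hcT_Xv\|_{\hsC^{\ep-2}}\|v\|_{\hB^{\frac Np+1}_{p,1}}$ --- together with terms produced by $[\hcT_X,\d_j]=-\hcT_{\d_jX}$ acting on a remainder; splitting each product of two factors $v$ into one copy in $\hsC^{-1}$ (or in $\hB^{\frac Np-1}_{p,1}$) and one in $\hB^{\frac Np+1}_{p,1}$ keeps every regularity sum positive and yields exactly the three products on the right-hand side of \eqref{es:TX}.

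Summing the two parts gives \eqref{es:TX}. The main obstacle is the bookkeeping: organizing the expansion so that every term is \emph{either} a Chemin--Leibniz remainder \emph{or} a manifestly estimable paraproduct/remainder, while keeping all regularity indices in the admissible ranges of Lemmas~\ref{lemma:ce1} and~\ref{lemma:chemin_leibniz}. This requires a systematic use of $\div v=0$ to trade $\hR(v^j,\d_k\cdot)$ for $\d_k\hR(v^j,\cdot)$, and of the two variants of \eqref{es:dXf_TXf} according to whether $\div X$ vanishes --- which is precisely the source of the dichotomy in hypothesis \eqref{cdt:p_Xbis}.
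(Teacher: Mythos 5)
Your proposal is correct and takes essentially the same route as the paper's own proof: the $\d_t$-commutator is reduced to $-\hcT_{\d_tX}v$ and controlled through the transport equation for $X$ together with \eqref{es:dXf_TXf} and the Bony expansion of $v\cdot\nabla X$ (using $\div v=0$), and the convective commutator is handled with Bony's decomposition, the Chemin--Leibniz Lemma~\ref{lemma:chemin_leibniz} and standard continuity of $\hT$ and $\hR$. The only (immaterial) difference is organizational: the paper first writes $v\cdot\nabla v=\d_\ell(v^\ell v)$ and applies Bony to the product $v^\ell v$, whereas you apply Bony directly to the transport terms and match the pieces, which produces the same five families of terms and the same bounds.
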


\begin{proof} 
This is essentially the proof  of  \cite[Proposition A.5]{DanZhx2016}. For the reader convenience, we here give a sketch of it.
Because   $\div v=0,$ we may write 
\begin{align*}
[\hcT_X, \d_t+v^{\ell} \d_\ell ]v & = -v^\ell\d_\ell \hT_{X^k}\d_kv- \hT_{\d_t X^k}\d_kv 
+ \hT_{X^k}\d_k(v^{\ell}\d_\ell v)\\  
&=- \hT_{\d_t X^k}\d_kv + \d_\ell\hcT_{X}(v^{\ell} v)-\hcT_{\d_\ell X}(v^{\ell} v) - v^{\ell} \d_{\ell} \hcT_Xv.
\end{align*}
Hence, decomposing  $v^{\ell} v$ according to Bony's decomposition, we discover that  
$$
[\hcT_X, \d_t+v^{\ell} \d_\ell ]v = \sum_{\alpha=1}^{\alpha=5} \hR_{\alpha}
$$
with
\begin{align*}
\hR_1 &:=- \hT_{\d_t X^k}\d_kv, &\hR_2 &:= \d_{\ell}(\hcT_X \hT_{v^{\ell}}v +\hcT_X \hT_{v}v^{\ell} ),\\
\hR_3 &:= \d_{\ell} \hcT_{X} \hR(v^{\ell},v),   &\hR_4& :=-\hcT_{\d_\ell X}(v^{\ell} v),\\
\hR_5 &:=- v^{\ell} \d_{\ell} \hcT_Xv.
\end{align*}
Now, it suffices to check that all the terms $\hR_\alpha$ may be bounded by the r.h.s. of \eqref{es:TX}.

\subsubsection*{$\bullet$ \underline{Bound of $\hR_1$}:}

From the equation \eqref{eq:prop_ce_dINS}, we have 
\begin{equation*}
\hR_1 = \hT_{v\cdot \nabla X^k} \d_k v - \hT_{\d_X v^k} \d_k v.
\end{equation*}
Hence using standard continuity results for the paraproduct, we deduce that
$$
\|\hR_1\|_{\hB^{\frac{N}{p}+\ep-2}_{p,1}} \lesssim \|\nabla v\|_{\hB^{\frac{N}{p}}_{p,1}}\bigl(\|v \cdot \nabla X\|_{\hsC^{\ep-2}} + \|\d_X v\|_{\hsC^{\ep-2}}\bigr).
$$
Keeping in mind \eqref{cdt:p_Xbis}, the last term may be bounded according to \eqref{es:dXf_TXf},
after using the embedding  $\hB^{\frac{N}{p}+\ep-2}_{p,1}(\R^N) \hookrightarrow \hsC^{\ep-2}(\R^N).$
We get
\begin{equation*}
\|\d_X v- \hcT_X v\|_{\hsC^{\ep-2}} \lesssim \|\nabla v\|_{\hB^{\frac{N}{p}-2}_{p,1}} \|X\|_{\hsC^\ep}.
\end{equation*}
  
As for the first term, we use the fact $\div v=0$ and the following decomposition
$$
v\cdot\nabla X=\hcT_{v}  X + \hT_{\d_{\ell} X } v^\ell +\d_\ell \hR (v^{\ell},X),
$$
which allow to get, as long as \eqref{cdt:p_Xbis} holds
\begin{equation*}
\|\hR_1\|_{\hB^{\frac{N}{p}+\ep-2}_{p,1}}  \lesssim \|\nabla v\|_{\hB^{\frac{N}{p}}_{p,1}}(\|v\|_{\hB^{\frac{N}{p}-1}_{p,1}} \|X\|_{\hsC^{\ep}} + \|\hcT_X v\|_{\hsC^{\ep-2}}).
\end{equation*}
\subsubsection*{$\bullet$  \underline{Bound of $\hR_2$}:}

Due to Lemma \ref{lemma:chemin_leibniz} (i) and continuity of paraproduct operator, we have
\begin{equation*}
\|\hR_2\|_{\hB^{\frac{N}{p}+\ep-2}_{p,1}} \lesssim \|X\|_{\hsC^{\ep}} \|v\|_{\hB^{\frac{N}{p}+1}_{p,1}} \|v\|_{\hsC^{-1}} +\|v\|_{\hsC^{-1}} \|\hcT_{X}v\|_{\hB^{\frac{N}{p}+{\ep}}_{p,1}}+\|v\|_{\hB^{\frac{N}{p}+1}_{p,1}}\|\hcT_{X}v\|_{\hsC^{\ep-2}}.
\end{equation*}
\subsubsection*{$\bullet$ \underline{Bound of $\hR_3$}:}

Applying Lemma \ref{lemma:chemin_leibniz} (ii) and continuity of remainder operator under the condition
 $\frac{N}{p} +\ep-1>0$ yields
\begin{equation*}
\|\hR_3\|_{\hB^{\frac{N}{p}+\ep-2}_{p,1}} \lesssim \|X\|_{\hsC^{\ep}} \|v\|_{\hB^{\frac{N}{p}+1}_{p,1}}\|v\|_{\hsC^{-1}} + \|v\|_{\hB^{\frac{N}{p}+1}_{p,1}} \|\hcT_X v\|_{\hsC^{\ep-2}}.
\end{equation*}

\subsubsection*{$\bullet$ \underline{Bound of $\hR_4$}:}
{}From  Bony decomposition \eqref{eq:bony}, it is easy to get
\begin{equation*}
\|v^l v\|_{\hB^{\frac{N}{p}}_{p,1}} \lesssim \|v\|_{\hsC^{-1}}\|v\|_{\hB^{\frac{N}{p}+1}_{p,1}}.
\end{equation*}
Hence
\begin{equation*}
\|\hR_4\|_{\hB^{\frac{N}{p}+\ep-2}_{p,1}} \lesssim \|\nabla X\|_{\hsC^{\ep-1}}\|v\|_{\hsC^{-1}}\|v\|_{\hB^{\frac{N}{p}+1}_{p,1}}. 
\end{equation*}

\subsubsection*{$\bullet$ \underline{Bound of $\hR_5$}:}
Applying Bony decomposition and using  that $\div v=0$ and $\frac{N}{p} + \ep > 1$ give
\begin{equation*}
\|\hR_5\|_{\hB^{\frac{N}{p}+\ep-2}_{p,1}} \lesssim \|v\|_{\hsC^{-1}}\|\hcT_{X}v\|_{\hB^{\frac{N}{p}+\ep}_{p,1}}+\|v\|_{\hB^{\frac{N}{p}+1}_{p,1}}\|\hcT_{X}v\|_{\hsC^{\ep-2}}.
\end{equation*}
Combining the above  estimates for all $\hR_{\alpha}$, with $\alpha=1,\dots,5$ yields \eqref{es:TX}. 
\end{proof}

Another consequence of Lemma \ref{lemma:chemin_leibniz} is the following estimate of $\div (X f g)$:
\begin{prop} \label{prop:dXfg_ND}
Let  $(s,p,r)$ be  in $]0,1[\times [1,\infty]^2,$ and $\eta,$ in $]0,1-s[.$ 
Consider a  bounded vector field   $X$ and two bounded functions $f,g$ satisfying
\begin{equation*}
X \in  \big(\hB^s_{p,r} (\R^N) \cap \sC^{s+\eta} (\R^N)\big)^{N},\,\,\, (f, g )\in \hB^s_{p,r} (\R^N) \times \hB^{-\eta}_{p, r} (\R^N) \,\,\,\hbox{and}\,\,\,  \d_X g \in \hB^{s-1}_{p,r} (\R^N).
\end{equation*}
If in addition  $\div X $ belongs to $\cM\big(\hB^{s}_{p,r}(\R^N) \rightarrow \hB^{s-1}_{p,r}(\R^N)\big),$ 
and  there exists some $q\in[1,p[$ 
such that 
\begin{equation}\label{cdt:s_pq}
 \div X \in \hB^{s_{p,q}}_{q,r}(\R^N)\ \hbox{  with }\ s_{p,q} := s-1 + N \big(\frac{1}{q} - \frac{1}{p}\big)>0,   
\end{equation}
 then  we have $\div (X f g) \in \hB^{s-1}_{p,r}(\R^N),$ and  the following estimate holds true:
 \begin{multline*}
 \|\div (X f g)\|_{\hB^{s-1}_{p,r}}\lesssim \|X\|_{\hB^s_{p,r} \cap \sC^{s+\eta}} \|f\|_{ L^\infty \cap \hB^s_{p,r}}\|g\|_{L^\infty \cap \hB^{-\eta}_{p,r}}  
+  \|f\|_{L^\infty} \|\d_X g\|_{\hB^{s-1}_{p,r}} \\
 + \|\div X\|_{\hB^{s_{p,q}}_{q,r} \cap \cM(\hB^{s}_{p,r}\rightarrow \hB^{s-1}_{p,r})}  \|g\|_{L^\infty}  \|f\|_{\hB^{s}_{p,r} \cap L^\infty}.
 \end{multline*}
\end{prop}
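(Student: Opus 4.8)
\emph{Proof strategy.} The plan is to write $\div(Xfg)=\d_\ell(X^\ell fg)$ and to split the triple product by means of Bony's decomposition \eqref{eq:bony}, sorting the resulting terms into three groups that produce, respectively, the three contributions of the announced bound. Since the individual pieces below only make sense for sufficiently smooth $X,f,g$, I would first establish the inequality as an \emph{a priori} estimate (say for $X,f,g\in\cS$), and then recover the stated generality by a standard regularisation of $(X,f,g)$ and passage to the limit, using the uniform bounds so obtained. Note that all the products entering the analysis are well defined under our hypotheses: in particular $fg$ makes sense, its delicate part $\hR(f,g)$ being controlled because $s>0$ and $g\in L^\infty$, and $fg\in L^\infty$.

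The first and only genuinely ``striated'' group is the one carrying the derivative of $g$ through $X$. Isolating $\hcT_X$ acting on products of $f$ and $g$ and expanding $fg=\hT_fg+\hT_gf+\hR(f,g)$, the Chemin--Leibniz formula of Lemma \ref{lemma:chemin_leibniz} allows one to commute $\hcT_X$ past these paraproducts and the remainder; the principal contribution that survives is $\hT_f(\hcT_Xg)$. Replacing $\hcT_Xg$ by $\d_Xg$ by means of the identity underlying \eqref{es:dXf_TXf}, namely $(\hcT_X-\d_X)g=\hT_{\d_kg}X^k+\d_k\hR(g,X^k)-\hR(g,\div X)$ — each of whose terms lies in $\hB^{s-1}_{p,r}$, the last one thanks to $g\in L^\infty$ and $\div X\in\hB^{s_{p,q}}_{q,r}$ with $s_{p,q}>0$ together with the Sobolev embedding $\hB^{s_{p,q}}_{q,r}\hookrightarrow\hB^{s-1}_{p,r}$ — one is left with $\hT_f(\d_Xg)$, which by continuity of the paraproduct and the assumption $\d_Xg\in\hB^{s-1}_{p,r}$ yields exactly the term $\|f\|_{L^\infty}\|\d_Xg\|_{\hB^{s-1}_{p,r}}$.

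The second group collects all the terms involving $\div X$, which appear whenever the derivative $\d_\ell$ is transferred from $X^\ell$ onto a low-frequency factor. For the pieces of $fg$ that inherit the $\hB^s_{p,r}$ regularity of $f$, namely $\hT_gf$ and $\hR(f,g)$ (both bounded in $\hB^s_{p,r}$ by $\|g\|_{L^\infty}\|f\|_{\hB^s_{p,r}}$ since $s>0$), multiplication by $\div X$ is handled directly through the multiplier norm $\|\div X\|_{\cM(\hB^s_{p,r}\to\hB^{s-1}_{p,r})}$. For the remaining lower-regularity pieces the trick is instead to pair $\div X$ with an $L^\infty$ factor built out of $f,g$: products of the form $\hT_{(\cdot)}(\div X)$ and $\hR((\cdot),\div X)$ land in $\hB^{s_{p,q}}_{q,r}$, which again embeds into $\hB^{s-1}_{p,r}$ by \eqref{cdt:s_pq}, producing the factor $\|\div X\|_{\hB^{s_{p,q}}_{q,r}}\|f\|_{L^\infty}\|g\|_{L^\infty}$. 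This is the precise role of the extra integrability hypothesis on $\div X$.

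Everything else forms the third group and is estimated by $\|X\|_{\hB^s_{p,r}\cap\sC^{s+\eta}}\|f\|_{L^\infty\cap\hB^s_{p,r}}\|g\|_{L^\infty\cap\hB^{-\eta}_{p,r}}$: the ``error'' terms of the Chemin--Leibniz formula (legitimate since $\eta\in\,]0,1-s[$ forces $s+\eta\in\,]0,1[$, so Lemma \ref{lemma:chemin_leibniz} applies with $\ep=s+\eta$), the paraproduct terms $\hT_{fg}X^\ell$ and remainder terms $\hR(X^\ell,fg)$ obtained after distributing $\d_\ell$ — estimated by continuity of $\hT$ and $\hR$, placing $X$ as the high-frequency factor whenever a naive count would give a nonpositive total regularity so that its $\sC^{s+\eta}$ (or $\hB^s_{p,r}$) regularity is exploited — and the $(\hcT_X-\d_X)$ corrections from \eqref{es:dXf_TXf}. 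I expect the main obstacle to be exactly this bookkeeping: ensuring that each of the numerous pieces has \emph{strictly positive} total regularity, which dictates in every case which factor must carry which Besov norm (systematically $g\in L^\infty$ in remainders, $X\in\hsC^{s+\eta}$ as the high-frequency entry, $\div X\in\hB^{s_{p,q}}_{q,r}$ against $L^\infty$ factors), together with the routine but lengthy justification of the underlying algebraic rearrangements at the low regularity allowed here.
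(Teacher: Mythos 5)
Your proposal is correct and follows essentially the same route as the paper's proof: Bony's decomposition of $\div(Xfg)$, the Chemin--Leibniz lemma (with $\ep=s+\eta$) applied to the critical term $\hcT_X\hT_f g$, the substitution of $\d_Xg$ for $\hcT_Xg$ via the identity behind \eqref{es:dXf_TXf}, and the two distinct uses of $\div X$ (its multiplier norm against $\hB^s_{p,r}$-regular pieces, its $\hB^{s_{p,q}}_{q,r}$ norm paired with $L^\infty$ factors through the embedding into $\hB^{s-1}_{p,r}$), which is exactly how the paper generates the three terms of the estimate. The only cosmetic differences are your grouping of the terms into three blocks instead of the paper's four pieces $\hF_1,\dots,\hF_4$, and the preliminary regularization step, which the paper dispenses with since every term of the decomposition is already well defined under the stated hypotheses.
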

\begin{proof}
In light of Bony's decomposition \eqref{eq:bony}, and  denoting 
$\hT'_g f := \hT_g f + \hR(f,g),$ we can decompose  $\div ( X fg)$ into
\begin{equation*}
\div (Xfg) = \div \big( \hT'_{fg} X + \hT_X (fg) \big) = \sum_{\alpha=1}^{4} \hF_{\alpha},
\end{equation*}
where 
\begin{align*}
\hF_1 &:= \div (\hT'_{fg} X),  &\hF_2 &:= \hT_{\div X} (fg),\\
\hF_3 &:= \hcT_X \hT'_g f,  &\hF_4 &:= \hcT_X \hT_f g.
\end{align*}

\subsubsection*{$\bullet$ \underline{Bound of $\hF_1$}:} As $s>0,$  standard
continuity results for $\hR$ and $\hR$ give  
\begin{equation*}
\|\hF_1\|_{B^{s-1}_{p,r}} \lesssim \|\hT'_{fg} X\|_{\hB^{s}_{p,r}} \lesssim \|f\|_{L^\infty}\|g\|_{L^\infty} \|X\|_{\hB^{s}_{p,r}}.  
\end{equation*}

\subsubsection*{$\bullet$ \underline{Bound of $\hF_2$}:}
Thanks to continuity results for $\hT',$   we have for $s <1,$   
\begin{equation*}
\|\hF_2\|_{\hB^{s-1}_{p,r}} \lesssim \| \div X\|_{\hB^{s-1}_{p,r}} \|f\|_{L^\infty}\|g\|_{L^\infty}. 
\end{equation*}
\subsubsection*{$\bullet$ \underline{Bound of $\hF_3$}:}
Because $X$ and $g$ are in $L^\infty$ and $s>0,$  
we readily have
\begin{equation*}
\|\hF_3\|_{\hB^{s-1}_{p,r}} \lesssim \|X\|_{L^\infty} \|\hT'_g f\|_{\hB^{s}_{p,r}} \lesssim \|X\|_{L^\infty}\|g\|_{L^\infty} \|f\|_{\hB^{s}_{p,r}}.
\end{equation*}
\subsubsection*{$\bullet$ \underline{Bound of $\hF_4$}:}
Because $0<s<s+\eta<1,$ 
Lemma \ref{lemma:chemin_leibniz} and continuity results for the paraproduct imply that
$$\begin{aligned}
\|\hcT_X \hT_f g\|_{\hB^{s-1}_{p,r}} &\lesssim  \|X\|_{\hsC^{s+\eta}} \|f\|_{L^\infty}\|g\|_{\dot B^{-\eta}_{p,r}} + \| \hT_f \hcT_X g \|_{\hB^{s-1}_{p,r}} +\| \hT_{ \hcT_X f}g \|_{\hB^{s-1}_{p,r}}\\
&\lesssim \|X\|_{\hsC^{s+\eta}}  \|f\|_{L^\infty}\|g\|_{\dot B^{-\eta}_{p,r}} +  \|f\|_{L^\infty} \|\hcT_X g\|_{\hB^{s-1}_{p,r}}  + \|g\|_{L^\infty}\|\hcT_X f \|_{\hB^{s-1}_{p,r}}.
\end{aligned}$$
To bound the last term, one may use the decomposition
$$\hcT_X f=\div(\hT_Xf)-f\div X+\hT_f\,\div X+\hR(f,\div X).$$
Hence using continuity results for $\hR$ and $\hT$ and the fact that 
 $(s_{p,q},q)$  satisfies  \eqref{cdt:s_pq},
$$
\|\hcT_X f\|_{\dot B^{s-1}_{p,r}}\lesssim
\|f\|_{\dot B^s_{p,r}}\bigl(\|X\|_{L^\infty}+ \|\div X\|_{\cM(\hB^{s}_{p,r}\rightarrow \hB^{s-1}_{p,r})}\bigr)
+\|f\|_{L^\infty}\|\div X\|_{ \hB^{s_{p,q}}_{q,r}}.
$$
Finally,  to bound the term with $\hcT_Xg,$ we use the fact that 
 $$\d_X g - \hcT_X g = \hT_{\nabla g}\cdot  X + \div \hR(X, g) - \hR(\div X, g),$$ 
 whence
\begin{equation}\label{es:dXh}
\|\d_X g - \hcT_X g\|_{\hB^{s-1}_{p,r}} \lesssim \|g\|_{L^\infty} \big(\|X\|_{\hB^s_{p,r}} + \|\div X\|_{\hB^{s_{p,q}}_{q,r}} \big).
\end{equation}
This completes the proof of the proposition.
\end{proof}
Proposition \ref{prop:dXfg_ND} above reveals that the bounded function $g$ may behave like some element in $\cM(\hB^{s}_{p,r})$ under a suitable additional structure assumption. 
If in addition $g$ has compact support, then one can relax a bit the regularity of  $X$ and $f$ to study $\d_X (fg),$ 
and get the following   generalization of \cite[Lemma A.6]{Dan1999}.
\begin{cor} \label{cor:dXfg_3D}
Consider a divergence-free vector field $X$ with coefficients in $B^\alpha_{\infty,1},$
and some function $f$ in $B^{\alpha'}_{\infty,1}$ with $0<\alpha,\alpha'<1.$ 
 Let $g \in L^\infty$ be compactly supported and satisfy  $\d_X g  \in \dot B^{\min\{\alpha,\alpha'\}-1}_{p,1}$ for some $p \in [1,\infty].$ Then we have $\d_X (fg) \in \dot B^{\min\{\alpha,\alpha'\}-1}_{p,1}.$     
\end{cor}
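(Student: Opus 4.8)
The plan is to reduce Corollary~\ref{cor:dXfg_3D} to Proposition~\ref{prop:dXfg_ND} by exploiting the compact support of $g$, which is precisely what lets us trade the (missing) $\dot B^s_{p,r}$-integrability of $X$ and $f$ for a weaker Besov-type control on a compact set. First I would set $s:=\min\{\alpha,\alpha'\}$ and observe that, since $\div X=0$, the quantity to estimate is $\d_X(fg)=\div(X\,fg)$. Because $g$ is compactly supported, pick a smooth cut-off $\phi\in\cC_c^\infty(\R^N)$ with $\phi\equiv1$ on a neighborhood of $\mathrm{Supp}\,g$; then $fg=(\phi f)\,g$ and $X\,fg=(\phi X)\,(\phi f)\,g$, so without loss of generality we may replace $X$ and $f$ by the \emph{compactly supported} fields $\phi X$ and $\phi f$. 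This is the key move: by Proposition~\ref{prop:besov-holder} (and the obvious embedding $B^\beta_{\infty,1}\hookrightarrow B^\beta_{q,1}$ for compactly supported distributions, together with Proposition~\ref{prop:equiv_besov} to pass to the homogeneous scale), $\phi X\in\dot B^{\alpha}_{q,1}$ and $\phi f\in\dot B^{\alpha'}_{q,1}$ for \emph{every} finite $q$, in particular for some $q\in[1,p[$ when $p>1$ (and trivially if $p=1$ one takes the data as they are). Moreover $\phi X\in B^\alpha_{\infty,1}\subset\cC^{0,\alpha}$ and one still has $\div(\phi X)=\phi\,\div X+X\cdot\nabla\phi=X\cdot\nabla\phi\in B^{\alpha-1}_{\infty,1}\cap\cE'$, hence in $\dot B^{s_{q}}_{q,1}$ for suitable indices and in $\cM(\dot B^s_{q,1}\to\dot B^{s-1}_{q,1})$.

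Next I would verify that all the hypotheses of Proposition~\ref{prop:dXfg_ND} are met with the exponents $(s,p,r)\rightsquigarrow(s,q',1)$ for an appropriate $q'$ and with $\eta$ chosen small enough in $]0,1-s[$. Concretely: $\phi X\in\dot B^{s}_{q',1}\cap\cC^{s+\eta}$ holds because $s\le\alpha,\alpha'$ and $s+\eta<1\le\alpha,\alpha'$-threshold can be arranged by shrinking $\eta$; $\phi f\in\dot B^s_{q',1}\cap L^\infty$; and $g\in L^\infty$ with $\d_X g\in\dot B^{s-1}_{p,1}$, which we downgrade if necessary to $\dot B^{s-1}_{q',1}$ using compact support again — here one must be slightly careful since $\d_X g$ need not be compactly supported, but $\d_X g=\div(Xg)-g\,\div X=\div(Xg)$ \emph{is} compactly supported (as $X$, being $\phi X$, and $g$ are), so $\dot B^{s-1}_{p,1}$-membership upgrades to all smaller $\dot B^{s-1}_{q',1}$; finally $g\in\dot B^{-\eta}_{q',1}$ follows from $g\in L^\infty\cap\cE'$ and $-\eta<0$. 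Condition~\eqref{cdt:s_pq} reads $s_{q',q}=s-1+N(\frac1q-\frac1{q'})>0$, which we secure by choosing $q<q'$ with $q$ close enough to $q'$ (both finite), and $\div(\phi X)\in\dot B^{s_{q',q}}_{q,1}$ holds since $\div(\phi X)=X\cdot\nabla\phi$ is compactly supported and in $B^{\alpha-1}_{\infty,1}$, hence in every $\dot B^{\beta}_{q,1}$ with $\beta<\alpha-1<s_{q',q}$ — adjusting $q$ once more if needed so that $s_{q',q}<\alpha-1$... more simply, one reverses the logic: fix $q'\ge p$ first (so $\dot B^{s-1}_{p,1}\hookrightarrow\dot B^{s-1}_{q',1}$ fails for $q'>p$!). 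Let me instead fix $q':=p$ and pick $q\in[1,p[$; then no downgrading of $\d_X g$ or $g$ is needed at all, and \eqref{cdt:s_pq} becomes $s-1+N(\frac1q-\frac1p)>0$, attainable since $\frac1q-\frac1p$ can be made as large as $1-\frac1p$, and $\div(\phi X)\in\dot B^{\alpha-1}_{q,1}\cap\cE'$ embeds into $\dot B^{s_{p,q}}_{q,1}$ as soon as $s_{p,q}\le\alpha-1$, i.e. provided $q$ is chosen with $N(\frac1q-\frac1p)\le\alpha-s$ — and since $\alpha\ge s$ this forces $q$ close to $p$, compatible with $s_{p,q}>0$ exactly when $0<N(\frac1q-\frac1p)\le\alpha-s$, a nonempty range when $\alpha>s$; in the borderline case $\alpha=s$ one notes $\div(\phi X)\in\cM(\dot B^s_{p,1}\to\dot B^{s-1}_{p,1})$ already suffices and \eqref{cdt:s_pq} can be dispensed with by a direct estimate, or one simply uses that $B^{\alpha-1}_{\infty,1}\cap\cE'\hookrightarrow \dot B^{s_{p,q}}_{q,1}$ whenever $s_{p,q}<\alpha-1$, always arrangeable for $q$ sufficiently close to $p$ when the strict inequality $s_{p,q}>0$ is also imposed, the two constraints being compatible precisely because $s_{p,q}\to 0^+$ as $q\to p^-$.

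Granting all the hypotheses, Proposition~\ref{prop:dXfg_ND} delivers $\div((\phi X)(\phi f)g)\in\dot B^{s-1}_{q,1}$ with the stated quantitative bound; since this distribution is compactly supported, Proposition~\ref{prop:equiv_besov} and the embedding $\dot B^{s-1}_{q,1}\cap\cE'\hookrightarrow\dot B^{s-1}_{p,1}$ (valid for $q\le p$, again because compact support lets us compare Lebesgue exponents upward) upgrade this to $\div(X\,fg)\in\dot B^{s-1}_{p,1}=\dot B^{\min\{\alpha,\alpha'\}-1}_{p,1}$, which is the claim.

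The main obstacle, and the step deserving the most care, is the juggling of exponents in the last paragraph of the verification: one needs a single auxiliary index $q\in[1,p[$ that simultaneously (i) makes $\phi X,\phi f$ land in $\dot B^{s}_{q,1}$ (automatic for all finite $q$, by compact support), (ii) satisfies the strict positivity $s_{p,q}=s-1+N(\tfrac1q-\tfrac1p)>0$ of \eqref{cdt:s_pq}, and (iii) is such that the compactly supported, $B^{\alpha-1}_{\infty,1}$-regular distribution $\div(\phi X)$ actually belongs to $\dot B^{s_{p,q}}_{q,1}$. Constraints (ii) and (iii) pull $q$ in opposite directions — (ii) wants $q$ small, (iii) wants $s_{p,q}$ not too large, hence $q$ close to $p$ — and checking that the resulting window is nonempty requires the hypothesis $0<\alpha,\alpha'<1$ together with the relation $s=\min\{\alpha,\alpha'\}\le\alpha$; when $\alpha>s$ the window is a genuine interval, and the degenerate case $\alpha=\alpha'=s$ must be handled by the alternative remark that $\div(\phi X)\in\cM(\dot B^s_{p,1}\to\dot B^{s-1}_{p,1})$ already covers the term it appears in, so that \eqref{cdt:s_pq} is then needed only through the already-established membership $\div(\phi X)\in\dot B^{s_{p,q}}_{q,1}$ for $q$ arbitrarily close to $p$, where $s_{p,q}\to0^+$ stays below $\alpha-1<0$... which fails — so in the fully degenerate case one instead observes that $g$ compactly supported plus $\d_X g\in\dot B^{s-1}_{p,1}$ already gives, via $\hT_f\hcT_X g$ being the only delicate piece, a direct paraproduct estimate bypassing \eqref{cdt:s_pq} entirely. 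Making this case distinction clean is the real content of the proof; everything else is a mechanical invocation of compact-support embeddings and Proposition~\ref{prop:dXfg_ND}.
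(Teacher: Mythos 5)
Your overall route is exactly the paper's: cut off $X$ and $f$ by some $\phi\in\cC^\infty_c$ with $\phi\equiv1$ near ${\rm Supp}\,g$, observe that $\d_X(fg)=\d_{\phi X}\big((\phi f)g\big)$ and $\d_Xg=\d_{\phi X}g$, and invoke Proposition \ref{prop:dXfg_ND}. The trouble is in your verification of the hypotheses of that proposition, where there is a genuine error that derails the argument. Since $\div X=0$, one has $\div(\phi X)=\nabla\phi\cdot X$, the product of a $\cC^\infty_c$ function with a $B^{\alpha}_{\infty,1}$ field: no derivative ever falls on $X$, so this divergence lies in $B^{\alpha}_{\infty,1}\cap\cE'(\R^N)$, not merely in $B^{\alpha-1}_{\infty,1}$ as you assert. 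By Propositions \ref{prop:besov-holder} and \ref{prop:equiv_besov} it therefore belongs to $\dot B^{\sigma}_{q,1}$ for every finite $q$ and every $\sigma\in]0,\alpha]$, so Condition \eqref{cdt:s_pq} only requires choosing $q\in[1,p[$ with $N(\frac1q-\frac1p)\in\,]1-s,\,1-s+\alpha]$, which is possible as soon as $N(1-\frac1p)>1-s$ (a restriction inherent to Proposition \ref{prop:dXfg_ND} itself, also left implicit in the paper, and which in particular rules out your casual treatment of $p=1$). The ``opposite pulls'' on $q$ that you wrestle with are an artifact of your under-estimate: with it, you are led to the impossible requirement $0<s_{p,q}\le\alpha-1<0$, which you yourself concede (``which fails''), and neither rescue works — the claim that $s_{p,q}\to0^+$ as $q\to p^-$ is false (in fact $s_{p,q}\to s-1<0$), and the concluding ``direct paraproduct estimate bypassing \eqref{cdt:s_pq}'' is never carried out. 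As written, the proof therefore does not close, even though the obstruction is of your own making and disappears once the regularity of $\div(\phi X)$ is stated correctly.

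A second point you assert rather than prove: Proposition \ref{prop:dXfg_ND} needs $\phi X\in\sC^{s+\eta}$ for some $\eta\in]0,1-s[$, and from $X\in B^{\alpha}_{\infty,1}$ alone this holds only when $s<\alpha$ (i.e. $\alpha'<\alpha$), because $B^{\alpha}_{\infty,1}\not\subset\sC^{\alpha+\eta}$; your phrase ``can be arranged by shrinking $\eta$'' is incorrect when $s=\alpha$. To be fair, the paper's one-line proof is silent on this as well, and in its applications $X$ has strictly better H\"older regularity than $f$, so the hypothesis is harmless there; but since you set out to check the hypotheses explicitly, this step needs either the extra room on $X$ or a slight lowering of $s$ (at the cost of the stated exponent). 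In summary: same strategy as the paper, but the execution contains a concrete gap in checking \eqref{cdt:s_pq}, a false limiting claim, and an unjustified H\"older hypothesis, so the proposal is not yet a proof.
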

\begin{proof}
Let  $\psi \in \cC^{\infty}_c$ be a cut-off function such that 
$\psi\equiv 1$ near ${\rm Supp}\, g.$ Denote $(\tX, \tf):= (\psi X, \psi f).$ 
From Proposition \ref{prop:besov-holder}, we know that $\tf$
 and $\tX$ are  in $\dot B^{\min(\alpha,\alpha')}_{q,1} \cap L^\infty$
for any $q \in [1,\infty].$ 
It is also  clear that  $\d_X (fg) = \d_{\tX} (\tf g)\in \dot B^{\min\{\alpha,\alpha'\}-1}_{p,1}.$ 
Hence applying Proposition  \ref{prop:dXfg_ND}
gives the result. 
\end{proof}
\end{appendix}

\section*{Acknowledgement}  
Part of this work has been performed  during a visit of  the second author in  Peking University. Zhifei Zhang and Chao Wang are warmly thanked for their welcome and stimulating discussions during this period. 
The first author is partially supported by  ANR-15-CE40-0011.
The second author is  granted by the  \emph{R\'eseau de Recherche Doctoral de  Math\'ematiques de l'\^Ile de France} (RDM-IdF).

\end{document}